\documentclass[a4paper,reqno]{amsart} 

\usepackage{amssymb}
\usepackage[mathcal]{euscript} 

\usepackage{tikz}
\usetikzlibrary{shapes.geometric}

\usepackage{tikz-cd} 
\usetikzlibrary{babel}

\usepackage{hyperref}

\usepackage{bbm}

\newcommand{\bydef}{:=}

\newcommand{\id}{\mathrm{id}}%identity map
\newcommand{\cC}{\mathcal{C}}

\newcommand{\cK}{\mathcal{K}}

\newcommand{\cQ}{\mathcal{Q}}

\newcommand{\ZZ}{\mathbb{Z}}

\newcommand{\FF}{\mathbb{F}} 

\newcommand{\Buno}{\mathbbm{1}} 
\DeclareMathOperator{\Hom}{\mathrm{Hom}}
\DeclareMathOperator{\End}{\mathrm{End}}

\DeclareMathOperator{\im}{\mathrm{im}\,}
\DeclareMathOperator{\coker}{coker}

\newcommand{\subo}{_{\bar 0}} 
\newcommand{\subuno}{_{\bar 1}}
\newcommand{\Bup}{\textup{B}} 
\newcommand{\Qup}{\textup{Q}}

\newcommand{\Repe}{\mathsf{Rep\,C}_3}

\newcommand{\vect}{\mathsf{Vec}}
\newcommand{\sVec}{\mathsf{sVec}}
\newcommand{\balpha}{\boldsymbol{\alpha}}

\newcommand{\Repap}{\mathsf{Rep}\,\balpha_p}
\newcommand{\Ver}{\mathsf{Ver}}

\newtheorem{theorem}{Theorem}[section]
\newtheorem{proposition}[theorem]{Proposition}
\newtheorem{lemma}[theorem]{Lemma}
\newtheorem{corollary}[theorem]{Corollary}

\theoremstyle{definition} 
\newtheorem{definition}[theorem]{Definition}

\theoremstyle{remark} \newtheorem{remark}[theorem]{Remark}
\numberwithin{equation}{section}

\def\hreglon{\hrule height 1pt}
\def\vreglon{\vrule height 12pt width1pt depth 4pt}

\begin{document}

\title%[]%
{Composition algebras in symmetric tensor categories}

\author[A.~Elduque]{Alberto Elduque} 
\address{Departamento de
Matem\'{a}ticas e Instituto Universitario de Matem\'aticas y
Aplicaciones, Universidad de Zaragoza, 50009 Zaragoza, Spain}
\email{elduque@unizar.es} 
\thanks{Both authors have been supported by grant
PID2021-123461NB-C21, funded by 
MCIN/AEI/ 10.13039/501100011033 and by
 ``ERDF A way of making Europe''. A.E. also acknowledges support by grant 
E22\_20R (Gobierno de Arag\'on), while J.R-I. acknowledges support by
grant Fortalece 2023/03 funded by ``Comunidad Aut\'onoma 
de La Rioja''; and by a predoctoral research grant FPI-2023 funded by ``Universidad de La Rioja''.}

\author[J.~R\'andez-Ib\'a\~nez]{Javier R\'andez-Ib\'a\~nez}
\address{Departamento de Matem\'aticas y Computaci\'on,
Universidad de La Rioja,\phantom{26006 26006} 26006 Logro{\~n}o, Spain}
\email{javier.randez@unirioja.es}

\subjclass[2020]{Primary 17A75; Secondary 18M05}

\keywords{Composition algebra, symmetric tensor category, Verlinde, quadratic form.}

%\date{}

\begin{abstract}
Composition algebras in symmetric tensor categories are defined. The classical results on standard unital 
composition algebras, as well as the classification of the unital composition superalgebras, are reviewed in 
light of this definition, and the unital composition algebras in the category
$\Ver_4^+$ (the counterpart to the category of vector superspaces in characteristic $2$) are classified.
\end{abstract}

\maketitle

%-----------------------

\bigskip

This paper is devoted to the definition of composition algebras in symmetric tensor categories.
This is relatively straightforward if the characteristic is not $2$, and was considered in \cite{DES}, but requires
extra care if characteristic $2$ is included, as quadratic forms, considered in \cite{Kannanetal}, are more
restrictive than symmetric bilinear forms. As a consequence, the definition of composition algebras in
Definition \ref{df:compoSTC} is subtler than the usual definition in the category of vector spaces.

The known classifications of compostion algebras in the categories of vector spaces or superspaces will be reviewed
under our more general definition, and the unital composition algebras will be classified in the category
$\Ver_4^+$ which is the counterpart to the category of vector superspaces in characteristic $2$.

%-----------------------
\bigskip

\section{Introduction}

Symmetric tensor categories constitute a natural setting to study algebras, extending the setting of 
the usual algebras, which are algebras in the category of vector spaces, or the superalgebras, which are algebras
in the category of vector superspaces. 

The celebrated Deligne's Theorem \cite{Deligne1,Deligne2} asserts
that any symmetric tensor category of moderate growth over an algebraically closed field of characteristic $0$ fibers
over the category of finite-dimensional vector superspaces. This is no longer true in characteristic $p>0$. 
A counterexample is provided (for $p>3$) by the Verlinde category $\Ver_p$, which arises as the semisimplification
of either the category of representations of the cyclic group of order $p$ or, alternatively, as the 
semisimplification of the category of representations of the Frobenius kernel $\balpha_p$ of the additive group
scheme $\mathbb{G}_a$ (see \cite{EtingofOstrik}). 

The process of semisimplification allowed Kannan \cite{Kannan} to obtain  the exceptional contragredient Lie
superalgebras specific of characteristics $3$ and $5$ by choosing suitable
degree $p$ nilpotent derivations on exceptional simple Lie algebras. This allowed him to consider these exceptional
simple Lie algebras as Lie algebras in the category $\Repap$, for $p=3$ or $5$, and then, by semisimplification,
he obtained Lie algebras in $\Ver_3\simeq \sVec$ or in a suitable subcategory of $\Ver_5$ equivalent to
$\sVec$. Many of these exceptional Lie superalgebras appear in a \emph{super magic square} in
\cite{CunhaElduque}, constructed in terms of composition superalgebras. In \cite{DES} it is shown how the
exceptional composition superalgebras can be constructed, in turn, through semisimplification of the 
split Cayley algebra,
providing a new point of view to some of the cases in \cite{Kannan}. Also, another case in \cite{Kannan} in
characteristic $5$ is explained in \cite{EEK} by obtaining the $10$-dimensional simple Jordan superalgebra of
Kac (see \cite{Kac,HK,BE}) through semisimplification of the split Albert algebra (exceptional Jordan algebra).
The process of semisimplification of contragredient Lie algebras has been treated with great generality
in \cite{APS}.

In characteristic $3$, the Verlinde category $\Ver_3$ is equivalent to the category $\sVec$ of finite-dimensional
vector superspaces, while in characteristic $2$, $\Ver_2$ is equivalent to the category $\vect$ of finite-dimensional
vector spaces. On the other hand, the category of vector superspaces in characteristic $2$ is just the category
of $\ZZ_2$-graded vector spaces, as the parity signs make no sense. In \cite{Venkatesh} it is  suggested that 
that the right analog of $\sVec$ in characteristic $2$ is the category $\Ver_4^+$ reviewed below, which does
not fiber over $\Ver_2$.

 \medskip
 
Let us review some facts that will be used throughout.

Given an arbitrary field $\FF$ of characteristic $2$, $\Ver_4^+$ is, as a tensor category, the category of
representations of the Hopf algebra of dual numbers $\FF[t]=\FF 1\oplus\FF t$, where $t^2=0$, and $t$ is primitive,
while the braiding arises from equipping $\FF[t]$ with the triangular structure given by the $R$-matrix 
$R=1\otimes 1+t\otimes t$ (see, e.g., \cite[\S 8.3]{EGNO}). 
In other words, given two $\FF[t]$-modules $U$ and $V$, the braiding $c_{U,V}$ is given by
\[
\begin{split}
c_{U,V}\colon U\otimes V&\longrightarrow V\otimes U\\
 u\otimes v&\mapsto v\otimes u + t(v)\otimes t(u).
\end{split}
\]
Lie algebras in the category $\Ver_4^+$ have been considered in \cite{BP,Hu}.

\medskip

Let $\FF$ be an arbitrary ground field. A \emph{composition algebra} over $\FF$ is a nonassociative (i.e., not
necessarily associative) algebra $C$ over $\FF$ (in other words, a vector space with a bilinear map
$C\times C\rightarrow C$), endowed with a regular quadratic form $\Qup\colon C\rightarrow \FF$,
called the \emph{norm}, which is multiplicative: $\Qup(xy)=\Qup(x)\Qup(y)$ for all $x,y\in C$.
In case $C$ is unital, $C$ is said to be a \emph{unital composition algebra} or a \emph{Hurwitz algebra}.
In this case $\Qup(1)=1$.
The celebrated Hurwitz Theorem  asserts that the dimension of a Hurwitz algebra is restricted to 
$1$, $2$, $4$, or $8$. These algebras are the analogues, over arbitrary fields, to the real division algebras of
the real numbers, complex numbers, quaternions and octonions (see, e.g., \cite{Eld00}).

Any unital composition algebra satisfies the Cayley-Hamilton equation of degree $2$:
\begin{equation}\label{eq:compo}
x^2-\Bup(x,1)x+\Qup(x)1=0
\end{equation}
for all $x\in C$, where $\Bup(x,y)\bydef \Qup(x+y)-\Qup(x)-\Qup(y)$ denotes the polar form of the quadratic
form $\Qup$.

Unital composition algebras are behind many exceptional situations in both Algebra and Geometry (see, e.g.
\cite{Bae02}). In particular, they are behind the exceptional simple Lie and Jordan algebras. Hence, 
in order to study classes of interesting algebras in symmetric tensor categories, 
the study of the unital composition algebras
is a natural place to start.

Actually, we will need later on an extension of the unital composition algebras. 
A unital nonassociative algebra $C$, endowed
with a quadratic form $\Qup\colon C\rightarrow \FF$ (no regularity condition is imposed), is said to be 
\emph{conic} if $\Qup(1)=1$ and \eqref{eq:compo} is satisfied (see \cite[\S 16.1]{GPR}).

\medskip

The goal of this work is the definition and study of the unital composition algebras in symmetric tensor categories,
over an arbitrary field $\FF$.

\medskip

The paper is organized as follows. In Section \ref{se:quadratic}, the definition and some properties of the
quadratic forms over symmetric tensor categories will be reviewed. These have been considered in \cite{Kannanetal},
and special attention must be paid to the case of characteristic $2$.

The definition of the (unital) composition algebra in a symmetric tensor category is given in 
Definition \ref{df:compoSTC} in Section \ref{se:composition}. In this section, the linearizations of the 
multiplicative property of the norm $\Qup$ of such an algebra are derived. While this is quite
straightforward in the category $\vect$, using elements, it is much more subtle in the generality considered here.
If the characteristic is not $2$, or if the characteristic is $2$ but the \emph{Frobenius twist} is trivial, then 
the associated symmetric bilinear form $\Bup$ determines $\Qup$ and the multiplicative property is implied
by the full linearization of it (Proposition \ref{pr:full_multiplicative}).

Section \ref{se:examples} reviews the known classification results on standard unital composition algebras 
(in $\vect$) and unital composition superalgebras, and how they fit in our more general theory. The definition
of unital composition superalgebra in \cite{EOsuper} does not coincide exactly, 
in characteristic $2$, with our definition 
here of a unital composition algebra in the symmetric tensor category $\sVec$, and these subtleties require
some care.

Finally, Section \ref{se:ver4} deals with the unital composition algebras in the category $\Ver_4^+$.
In case the Frobenius twist is trivial, there appears a family of two-dimensional unital composition algebras, and
another family of four-dimensional unital composition algebras (Theorem \ref{th:kertimtclass}). 
On the other hand, if the Frobenius twist is not trivial, the classification of the unital composition algebras
reduces to the well-known classification of the unital composition algebras in $\vect$.

%-------------------------

\bigskip

\section{Quadratic forms in symmetric tensor categories}\label{se:quadratic}

Quadratic forms in symmetric tensor categories have been dealt with in \cite{Kannanetal}. Here we will review
the results there in a way suitable for our purposes.

Given an object $U$ in a symmetric tensor category $\cC$ over a field $\FF$, consider the kernel 
$(\Gamma^2(U),\iota_U)$
of the morphism $\id-c_{U,U}\colon U\otimes U\rightarrow U\otimes U$. By its own definition, the composition
\[
\Gamma^2(U)\xrightarrow{\iota_U} U\otimes U\xrightarrow{\id -c_{U,U}}U\otimes U
\]
is trivial, so $c_{U,U}\circ\iota_U=\iota_U$.

Since $c_{U,U}^2=\id$, the composition $(\id -c_{U,U})\circ(\id+c_{U,U})$ is trivial, so there is a unique
morphism $\rho_U\colon U\otimes U\rightarrow \Gamma^2(U)$ such that
\begin{equation}\label{eq:rhoU}
\iota_U\circ\rho_U=\id +c_{U,U}.
\end{equation}
That is, we have the commutative diagram:
\[
\begin{tikzcd}
U\otimes U \arrow[rr, "\id+c_{U,U}"] \arrow[dr, "\rho_U"', dashrightarrow] 
& & U\otimes U \arrow[rr, "\id-c_{U,U}"] & & U\otimes U\\
& \Gamma^2(U)  \arrow[ur, "\iota_U", rightarrowtail]
\end{tikzcd}
\]
As $\iota_U\circ\rho_U\circ c_{U,U}=(\id+c_{U,U})\circ c_{U,U}=(\id+c_{U,U})=\iota_U\circ\rho_U$ and 
$\iota_U$ is a monomorphism, we get
\begin{equation}\label{eq:rhoUc}
\rho_U=\rho_U\circ c_{U,U}.
\end{equation}

\begin{definition}\label{df:QB}
Let $U$ be an object in a symmetric tensor category $\cC$.
\begin{itemize}
\item A \emph{quadratic form} on $U$ is a morphism $\Qup\colon \Gamma^2(U)\rightarrow \Buno$.
\item Given a quadratic form $\Qup$ on $U$, its \emph{associated bilinear form}  is the 
morphism $\Bup=\Qup\circ\rho_U\colon U\otimes U\rightarrow \Buno$. (This is symmetric, i.e. 
$\Bup=\Bup\circ c_{U,U}$,
because of \eqref{eq:rhoUc}.)
\item A quadratic form $\Qup$ on $U$ is said to be \emph{nondegenerate} if so is its associated bilinear form
$\Bup$, that is (\cite[Definition 2.3]{Kannanetal}), if the image of $\Bup$ under the isomorphism
\[
\Hom_{\cC}(U\otimes U,\Buno)\simeq\Hom_{\cC}(U,U^*)
\]
afforded by tensor-hom adjunction, is an isomorphism.
\end{itemize}
\end{definition}

The definition of the associated bilinear form is given in a different way in \cite[Definition 2.5]{Kannanetal},
although we will see soon that both definitions are equivalent (Remark \ref{re:B_Kannan}).

The case of characteristic $2$ is always more subtle. To begin with, the morphism $\rho_U$ in \eqref{eq:rhoU}
is always an epimorphism if the characteristic is not $2$, due to the next proposition.

\begin{proposition}\label{pr:rhoUnot2}
Let $U$ be an object in a symmetric tensor category $\cC$ over a field $\FF$. 
With the notations above, the following equation hold:
\[
\rho_U\circ\iota_U=2\id_{\Gamma^2(U)}.
\]
In particular, if the characteristic of $\FF$ is not $2$, then $\rho_U$ is an epimorphism. 
\end{proposition}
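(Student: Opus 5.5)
Since $\iota_U$ is a monomorphism (it is the kernel of $\id-c_{U,U}$), it suffices to verify the identity after composing on the left with $\iota_U$. In other words, I will show
\[
\iota_U\circ\rho_U\circ\iota_U=\iota_U\circ(2\id_{\Gamma^2(U)})=2\iota_U .
\]
By \eqref{eq:rhoU}, the left-hand side equals $(\id+c_{U,U})\circ\iota_U=\iota_U+c_{U,U}\circ\iota_U$. We already noted that $c_{U,U}\circ\iota_U=\iota_U$, because $(\id-c_{U,U})\circ\iota_U=0$. Hence the left-hand side is $2\iota_U$. Cancelling the monomorphism $\iota_U$, which is legitimate in the additive ($\FF$-linear) category $\cC$ where composition is bilinear, gives $\rho_U\circ\iota_U=2\id_{\Gamma^2(U)}$.

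For the second assertion, assume $\operatorname{char}\FF\neq 2$. Then $2$ is invertible in $\FF$, so $\rho_U\circ(\tfrac12\iota_U)=\id_{\Gamma^2(U)}$. Thus $\rho_U$ has a right inverse, i.e.\ it is a split epimorphism, and in particular an epimorphism. Concretely, if $f\circ\rho_U=g\circ\rho_U$, then composing on the right with $\tfrac12\iota_U$ yields $f=g$.

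There is no real obstacle here. The only points needing care are that $\iota_U$ is mono, which holds because kernels in an abelian category are monomorphisms, and that scalar multiplication by $2$ commutes with composition, which holds by $\FF$-linearity of composition in $\cC$.
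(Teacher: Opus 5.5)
Your proposal is correct and follows the paper's own argument: compose with the monomorphism $\iota_U$, use $\iota_U\circ\rho_U=\id+c_{U,U}$ together with $c_{U,U}\circ\iota_U=\iota_U$ to obtain $2\iota_U$, and cancel $\iota_U$. Your deduction that $\tfrac12\iota_U$ is a right inverse of $\rho_U$ when the characteristic is not $2$ is exactly what the paper's ``in particular'' leaves implicit.
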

\begin{proof}
Note that $\iota_U\circ\rho_U\circ\iota_U=(\id_{U\otimes U}+c_{U,U})\circ\iota_U
=2\iota_U=\iota_U\circ(2\id_{\Gamma^2(U)})$, so the result follows because $\iota_U$ is a monomorphism.

Alternatively, if the characteristic is not $2$, 
the result follows from the fact that $\frac{\id-c_{U,U}}{2}$ and $\frac{\id+c_{U,U}}{2}$ 
are orthogonal
idempotents in $\End_\cC(U\otimes U)$ whose sum is the identity. Then $\Gamma^2(U)$, which is the
kernel of $\id - c_{U,U}$,  and hence also of the idempotent $\frac{\id-c_{U,U}}{2}$, is a direct summand
of $U\otimes U$, and the uniqueness of $\rho_U$ in \eqref{eq:rhoU} shows that $\rho_U$ is twice the projection
on this direct summand. The result follows.
\end{proof}

\begin{corollary}\label{co:Balternating}
Let $\Qup$ be a quadratic form on an object $U$ of a symmetric tensor category $\cC$ over a field
of characteristic $2$. Then the associated bilinear form $\Bup$ is alternating (i.e. $\Bup\circ\iota_U=0$).
\end{corollary}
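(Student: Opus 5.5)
The plan is to compute $\Bup\circ\iota_U$ directly from the definition of the associated bilinear form and then apply Proposition \ref{pr:rhoUnot2}. By Definition \ref{df:QB}, $\Bup=\Qup\circ\rho_U$, so
\[
\Bup\circ\iota_U=\Qup\circ\rho_U\circ\iota_U .
\]

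Proposition \ref{pr:rhoUnot2} gives $\rho_U\circ\iota_U=2\,\id_{\Gamma^2(U)}$, and this identity holds in any characteristic, since its proof uses only that $\iota_U$ is a monomorphism. Substituting, we get
\[
\Bup\circ\iota_U=\Qup\circ(2\,\id_{\Gamma^2(U)})=2\,\Qup .
\]
Here we use that composition in $\cC$ is $\FF$-bilinear, because a symmetric tensor category over $\FF$ is $\FF$-linear. Since the characteristic of $\FF$ is $2$, the scalar $2$ is zero, so $\Bup\circ\iota_U=0$.

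There is no real obstacle in this argument. The only point to check is that Proposition \ref{pr:rhoUnot2} is used in its characteristic-free form, the identity $\rho_U\circ\iota_U=2\id$, and not in its epimorphism consequence, which needs characteristic different from $2$. It is also worth noting, to justify the name, that $\Bup\circ\iota_U=0$ is the categorical version of $\Bup(x,x)=0$: $\Gamma^2(U)$ plays the role of the symmetric tensors, which contain the elements $x\otimes x$.
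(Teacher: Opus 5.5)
Your proof is correct and is the paper's argument: write $\Bup\circ\iota_U=\Qup\circ\rho_U\circ\iota_U$ and apply the identity $\rho_U\circ\iota_U=2\id_{\Gamma^2(U)}$ from Proposition \ref{pr:rhoUnot2}, which becomes zero in characteristic $2$. You were also right to note that this identity holds in every characteristic and that the epimorphism part of that proposition plays no role.
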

\begin{proof}
The result follows from $\Bup\circ\iota_U=\Qup\circ\rho_U\circ\iota_U$ and this is $0$ if the characteristic is $2$
by Proposition \ref{pr:rhoUnot2}.
\end{proof}

On the other hand, if the characteristic of the ground field $\FF$ of the symmetric tensor category $\cC$ is $2$,
then $\delta\bydef \id-c_{U,U}=\id+c_{U,U}$ satisfies $\delta^2=0$. We have that
$\bigl(\Gamma^2(U),\iota_U\bigr)$ is the kernel of $\delta$. Let $(S^2(U),\pi_U)$ be the cokernel of $\delta$, and
let $\varphi\colon \Gamma^2(U)\rightarrow S^2(U)$ be the composition of $\iota_U$ and $\pi_U$.

The \emph{Frobenius twist} $U^{(1)}$ of our object $U$ is defined to be the image of $\varphi$ (see
\cite[\S 2.1.2]{Kannanetal}).

\begin{proposition}\label{pr:rhoU2}
Let $U$ be an object in a symmetric tensor category $\cC$ over a field $\FF$ of characteristic $2$. Then
$\rho_U$ is an epimorphism if and only if the Frobenius twist $U^{(1)}$ is trivial.
\end{proposition}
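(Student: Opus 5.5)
In characteristic $2$ we have $\delta=\id+c_{U,U}=\id-c_{U,U}$, so \eqref{eq:rhoU} reads $\iota_U\circ\rho_U=\delta$. The plan is to identify the image of $\rho_U$ with the image of $\delta$, viewed inside $\Gamma^2(U)$, and then compare it with the kernel of $\varphi=\pi_U\circ\iota_U$. We work in an abelian category, so images, kernels and cokernels behave as in module categories.

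\emph{Step 1: the image of $\rho_U$.} Since $\iota_U$ is a monomorphism and $\iota_U\circ\rho_U=\delta$, the image of $\rho_U$ is carried isomorphically by $\iota_U$ onto $\im\delta\subseteq U\otimes U$. Because $\delta^2=0$, we have $\im\delta\subseteq\ker\delta=\Gamma^2(U)$. More precisely, $\im\rho_U$ is the subobject $\iota_U^{-1}(\im\delta)$ of $\Gamma^2(U)$, and $\iota_U^{-1}(\im\delta)\cong\im\delta$ as a subobject of $\Gamma^2(U)$.

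\emph{Step 2: the kernel of $\varphi$.} Since $\pi_U$ is the cokernel of $\delta$, its kernel is $\im\delta$. Hence the kernel of $\varphi=\pi_U\circ\iota_U$ is $\iota_U^{-1}(\ker\pi_U)=\iota_U^{-1}(\im\delta)$, which is exactly $\im\rho_U$ by Step 1. This gives an exact sequence
\[
U\otimes U\xrightarrow{\rho_U}\Gamma^2(U)\xrightarrow{\varphi}S^2(U).
\]
As a consequence, $U^{(1)}=\im\varphi\cong\Gamma^2(U)/\im\rho_U\cong\coker\rho_U$.

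\emph{Step 3: conclusion.} The morphism $\rho_U$ is an epimorphism if and only if $\coker\rho_U=0$, if and only if $U^{(1)}=0$, which is the statement of the proposition.

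The only delicate step is Step 2: checking rigorously that $\ker(\pi_U\circ\iota_U)$ equals the pullback of $\ker\pi_U$ along the mono $\iota_U$, and that this pullback coincides with $\im\rho_U$. I would verify it through the universal properties, or simply invoke an embedding theorem (Freyd–Mitchell) to argue with elements. With elements it is clear: for $x\in\Gamma^2(U)$, $\varphi(x)=0$ if and only if $\iota_U(x)=\delta(y)=\iota_U(\rho_U(y))$ for some $y$, if and only if $x=\rho_U(y)$.
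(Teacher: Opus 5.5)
Your proof is correct and is essentially the paper's argument. The paper also shows that $\ker\varphi$ is the image $I$ of $\delta$ sitting inside $\Gamma^2(U)$, via the monomorphism $j$ with $\iota_U\circ j=i$, and that $\rho_U=j\circ q\circ\pi_U$ factors through it, so $\rho_U$ is an epimorphism iff $j$ is an isomorphism iff $\varphi=0$. Your exact sequence $U\otimes U\xrightarrow{\rho_U}\Gamma^2(U)\xrightarrow{\varphi}S^2(U)$, giving $U^{(1)}\cong\coker\rho_U$, is the same computation phrased with pullbacks rather than explicit universal-property arguments.
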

\begin{proof}
Let us consider, for a while, a more general situation. Let $\cC$ be any abelian category, and 
$\delta\colon W\rightarrow W$ an endomorphism in $\cC$ with $\delta^2=0$. Let $\ker \delta=(K,\iota)$,
$\coker\delta=(C,\pi)$, and consider the canonical decomposition
\[
\begin{tikzcd}
K \arrow[r, "\iota", rightarrowtail] & W \arrow[r, "p", twoheadrightarrow] & I \arrow[r, "i", rightarrowtail]
& W \arrow[r, "\pi", twoheadrightarrow] & C
\end{tikzcd}
\]
where $\delta=i\circ p$, $(K,\iota)=\ker\delta=\ker p$, $(C,\pi)=\coker\delta=\coker i$, $I$ is the image
of $\delta$, and also $(I,i)=\ker \pi$ and $(I,p)=\coker\iota$.

As $\delta^2=i\circ p\circ i\circ p=0$, while $i$ is a monomorphism and $p$ is an epimorphism, we get $p\circ i=0$
and hence
\begin{itemize}
\item using that $(K,\iota)$ is the kernel of $p$, it follows that there is a unique morphism 
$j\colon I\rightarrow K$ such that
$i=\iota\circ j$. Besides, $j$ is a monomorphism because so is $i$;

\item using that $(C,\pi)$ is the cokernel of $i$, it follows that there is a unique morphism
$q\colon C\rightarrow I$ such that $p=q\circ \pi$. Besides, $q$ is an epimorphism because so is $p$.
\end{itemize}

Therefore, we get the commutative diagram
%\[
%\begin{tikzcd}
%W \arrow[r, "\pi", twoheadrightarrow] \arrow[rrrr, bend right, looseness=0.6, "\delta", twoheadrightarrow] 
%&C \arrow[r, "q", twoheadrightarrow] & I \arrow[r, "j", rightarrowtail]
%& K \arrow[r, "\iota", rightarrowtail] & W
%\end{tikzcd}
%\]
\[
\begin{tikzcd}
W \arrow[r, "\pi", twoheadrightarrow] 
  \arrow[rrrr, 
         to path={
           [rounded corners=4pt] -- ([yshift=-.5cm]\tikztostart.center) 
           -- node[below] {$\scriptstyle\delta$} ([yshift=-.5cm]\tikztotarget.center) 
           -- (\tikztotarget)
         }
        ] 
& C \arrow[r, "q", twoheadrightarrow] 
& I \arrow[r, "j", rightarrowtail]
& K \arrow[r, "\iota", rightarrowtail] 
& W
\end{tikzcd}
\]
Consider now the morphim $\varphi=\pi\circ\iota\colon K\rightarrow C$. Then we get $(I,j)=\ker\varphi$.
Indeed, $\varphi\circ j=\pi\circ\iota \circ j=\pi\circ i=0$, and for any morphism $h\colon X\rightarrow K$ such
that $\varphi\circ h=0$, we get $\pi\circ\iota\circ h=0$, so that there is a unique morphism $k\colon X\rightarrow I$
such that $\iota\circ h=i\circ k$ (because $\ker\pi=(I,i)$). Then $\iota\circ h=i\circ k=\iota\circ j\circ k$, and
we conclude, using that $\iota$ is a monomorphism, that $h=j\circ k$.

In the same vein, we get $(I,q)=\coker\varphi$.

In particular, $\varphi$ is trivial if and only if $j$ is an isomorphism, if and only if $q$ is an isomorphism.

\smallskip

Let us go back to our original situation, where $\cC$ is a symmetric tensor category. Take $W=U\otimes U$ 
and $\delta=\id+c_{U,U}$,
so that $(K,\iota)=(\Gamma^2(U),\iota_U)$ and $(C,\pi)=(S^2(U),\pi_U)$. Thus we have the commutative
diagram
\[
\begin{tikzcd}
U\otimes U \arrow[r, "\pi_U", twoheadrightarrow] 
 \arrow[rrrr, 
         to path={
           [rounded corners=4pt] -- ([yshift=-.5cm]\tikztostart.center) 
           -- node[below] {$\scriptstyle{\id + c_{U,U}}$} ([yshift=-.5cm]\tikztotarget.center) 
           -- (\tikztotarget)
         }
        ] 
&S^2(U) \arrow[r, "q", twoheadrightarrow] & I \arrow[r, "j", rightarrowtail]
& \Gamma^2(U) \arrow[r, "\iota_U", rightarrowtail] & U\otimes U
\end{tikzcd}
\]
The uniqueness of $\rho_U$ in \eqref{eq:rhoU} forces $\rho_U=j\circ q\circ \pi_U$.
But $q$ and $\pi_U$ are epimorphisms and $j$ is a monomorphism. Hence $\rho_U$ is an epimorphism if and only
if $j$ is an isomorphism, and this happens if and only if the morphism $\varphi=\pi_U\circ\iota_U$ is trivial,
this being equivalent to the Frobenius twist $U^{(1)}$ being trivial.
\end{proof}

\begin{remark}
Let $U$ be an object in a symmetric tensor category $\cC$ over a field of characteristic $2$ with trivial Frobenius
twist. Then $\id +c_{U,U}=\iota_U\circ\rho_U$ is the canonical decomposition into an epimorphism followed
by a monomorphism, so $\Gamma^2(U)$ is the image of $\id +c_{U,U}$, and we have
$\bigl(\Gamma^2(U),\rho_U)=\coker \iota_U$. As a consequence, if $\Bup\colon U\otimes U\rightarrow \Buno$ 
is an alternating bilinear form on $U$ (i.e., $\Bup\circ\iota_U=0$), then $\Bup =\Qup\circ\rho_U$ for a 
unique quadratic form $\Qup\colon\Gamma^2(U)\rightarrow\Buno$.
\end{remark}

\begin{remark}\label{re:B_Kannan}
In \cite[Definition 2.5]{Kannanetal}, the associated bilinear form $\Bup$ attached to a quadratic form $\Qup$ on
the object $U$ in a symmetric tensor category $\cC$ over a field of characteristic $2$ is defined as the composition
\[
U\otimes U\xrightarrow{\pi_U} S^2(U)\xrightarrow{q} I\xrightarrow{i}\Gamma^2(U)\xrightarrow{\Qup} \Buno.
\]
As shown in the proof above, this is equivalent to our Definition \ref{df:QB}.
\end{remark}

%-----------------------
\bigskip

\section{Composition algebras in symmetric tensor categories}\label{se:composition}

The goal of this section is the definition of composition algebras over arbitrary symmetric tensor categories.

Actually, over fields of characteristic different from $2$, these have been defined in \cite{DES} using just
the polar form of the norm, but this does not work in general.

\smallskip

Given an object $U$ in a symmetric tensor category $\cC$ over a field $\FF$, consider, as before, the kernel 
$(\Gamma^2(U),\iota_U)$
of the morphism $\id-c_{U,U}\colon U\otimes U\rightarrow U\otimes U$. 

Assume now that $U$ is endowed with a multiplication, that is, with a morphism
\[
\mu\colon U\otimes U\rightarrow U.
\]
This multiplication induces a multiplication $\hat \mu$ in $U\otimes U$ as the composition (as usual, care will not
be taken about the association of parentheses, or about the unit isomorphisms)
\begin{equation}\label{eq:hatmu}
(U\otimes U)\otimes (U\otimes U)\xrightarrow{\id_U\otimes c_{U,U}\otimes \id_U} U\otimes U\otimes U\otimes U
\xrightarrow{\mu\otimes \mu} U\otimes U,
\end{equation}
or, in graphical terms (top-down):
\[
\hat\mu=\quad
\begin{tikzpicture}[scale=0.8,baseline=11ex]
% c23

\draw (-0.6,3) -- (-0.6,2.2);
\draw (2.4,3) -- (2.4,2.2);
  \draw (1.22,3) .. controls (1.2,2.4) and (0.6,2.8) .. (0.6,2.2);
 \draw[line width=4pt, white, smooth] (0.9,2.8) -- (0.9,2.4);
 \draw (0.6,3) .. controls (0.6,2.4) and (1.2,2.8) .. (1.2,2.2);

\draw[thin, dotted] (-1,2.2) -- (2.8,2.2);

% multiplication on the left
\node[draw, inner sep=1.6pt, shape=regular polygon, regular polygon sides=4] (node3) at (0,1.6) {$\mu$};
\draw (0,0.8) -- (node3.south);

  \draw (-0.6,2.2) .. controls (-0.6,1.9) and (-0.5,1.6) .. (node3.west);
  \draw (0.6,2.2) .. controls (0.6,1.9) and (0.5,1.6) .. (node3.east);

%multiplication on the right
\node[draw, inner sep=1.6pt, shape=regular polygon, regular polygon sides=4] (node4) at (1.8,1.6) {$\mu$};
\draw (1.8,0.8) -- (node4.south);

  \draw (1.2,2.2) .. controls (1.2,1.9) and (1.3,1.6) .. (node4.west);
  \draw (2.4,2.2) .. controls (2.4,1.9) and (2.3,1.6) .. (node4.east);

\end{tikzpicture}
\]

This multiplication $\hat\mu$ induces a multiplication in $\Gamma^2(U)$.

\begin{theorem}\label{th:mutilde}
There exists a unique morphism $\tilde\mu\colon \Gamma^2(U)\otimes\Gamma^2(U)\rightarrow \Gamma^2(U)$ such 
that the following diagram
\[
\begin{tikzcd}
\Gamma^2(U)\otimes\Gamma^2(U) \arrow[d,"\iota_U\otimes\iota_U"'] \arrow[r, "\tilde\mu"] 
& \Gamma^2(U) \arrow[d, "\iota_U"] \\
U\otimes U\otimes U\otimes U \arrow[r, "\hat\mu"] & U\otimes U
\end{tikzcd}
\]
is commutative.
\end{theorem}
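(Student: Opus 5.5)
Uniqueness is immediate: $\iota_U$ is a monomorphism, so any two morphisms $\tilde\mu,\tilde\mu'$ making the diagram commute satisfy $\iota_U\circ\tilde\mu=\iota_U\circ\tilde\mu'$, hence $\tilde\mu=\tilde\mu'$. For existence, since $(\Gamma^2(U),\iota_U)$ is the kernel of $\id-c_{U,U}$, it suffices to prove
\[
(\id-c_{U,U})\circ\hat\mu\circ(\iota_U\otimes\iota_U)=0,
\]
that is, $c_{U,U}\circ\hat\mu\circ(\iota_U\otimes\iota_U)=\hat\mu\circ(\iota_U\otimes\iota_U)$. The universal property of the kernel then gives $\tilde\mu$.

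To prove this identity I will use naturality of the braiding. First, $c_{U,U}\circ(\mu\otimes\mu)=(\mu\otimes\mu)\circ c_{U\otimes U,U\otimes U}$. Next, I expand $c_{U\otimes U,U\otimes U}$ with the hexagon axioms as a composite of four elementary crossings $c_{U,U}$ in positions $(2,3),(1,2),(3,4),(2,3)$; concretely $c_{U\otimes U,U\otimes U}=(\id\otimes c\otimes\id)(c\otimes c)(\id\otimes c\otimes\id)$. This gives
\[
c_{U,U}\circ\hat\mu=(\mu\otimes\mu)\circ(\id\otimes c\otimes\id)(c\otimes c)(\id\otimes c\otimes\id)(\id\otimes c\otimes\id).
\]
Since $c_{U,U}^2=\id$ in a symmetric category, the last two factors cancel, leaving $(\mu\otimes\mu)\circ(\id\otimes c\otimes\id)\circ(c\otimes c)=\hat\mu\circ(c_{U,U}\otimes c_{U,U})$.

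Precomposing with $\iota_U\otimes\iota_U$ and using $c_{U,U}\circ\iota_U=\iota_U$, I get $c_{U,U}\circ\hat\mu\circ(\iota_U\otimes\iota_U)=\hat\mu\circ(\iota_U\otimes\iota_U)$, as required.

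The main obstacle is the bookkeeping in writing $c_{U\otimes U,U\otimes U}$ as elementary crossings and checking that the two middle $(2,3)$ crossings really are adjacent and cancel. I would verify this graphically: in the string diagram of $c\circ\hat\mu$, the middle strands cross twice, and in a symmetric category the double crossing straightens to the identity. One further point to check is that $\iota_U\otimes\iota_U$ need not be a kernel of anything. This causes no difficulty, because the argument only precomposes with it and never uses exactness of the tensor product.
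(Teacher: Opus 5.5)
Your proof is correct and takes essentially the same route as the paper. The paper argues with string diagrams, and you write the same steps algebraically: naturality of the braiding to move $c_{U,U}$ past $\mu\otimes\mu$, cancellation of the double middle crossing via $c_{U,U}^2=\id$, then $c_{U,U}\circ\iota_U=\iota_U$, and finally the kernel property (your uniqueness argument and your hexagon decomposition $c_{U\otimes U,U\otimes U}=(\id\otimes c\otimes\id)(c\otimes c)(\id\otimes c\otimes\id)$ are also correct). Your intermediate identity $c_{U,U}\circ\hat\mu=\hat\mu\circ(c_{U,U}\otimes c_{U,U})$ is exactly \eqref{eq:hatmucs}, which the paper proves later, inside the proof of Lemma \ref{le:firstlinearization}.
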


\begin{proof}
This is better done graphically. The morphism $c_{U,U}\circ\hat\mu\circ(\iota_U\otimes\iota_U)$ corresponds to the diagram
\[
\begin{tikzpicture}[scale=0.8,baseline=-9ex]
  % left iota_U 
  \node[draw, inner sep=.1pt, shape=regular polygon, regular polygon sides=4] (node1) at (0,0) {$\iota_U$};
  \draw (0,.8) -- (node1.north);

  \draw (node1.west) .. controls (-0.5,-0.1) and (-0.6,-0.3) .. (-0.6,-0.6);
  \draw (node1.east) .. controls (0.5,-0.1) and (0.6,-0.3) .. (0.6,-0.6);
  % right iota_u 
\node[draw, inner sep=.1pt, shape=regular polygon, regular polygon sides=4] (node2) at (1.8,0) {$\iota_U$};
  \draw (1.8,.8) -- (node2.north);

  \draw (node2.west) .. controls (1.3,-0.1) and (1.2,-0.3) .. (1.2,-0.6);
  \draw (node2.east) .. controls (2.3,-0.1) and (2.4,-0.3) .. (2.4,-0.6);
  
  \draw[thin, dotted] (-1,-0.6) -- (2.8,-0.6);
 % braid in 23
  \draw (-0.6,-0.6) -- (-0.6,-1.4);
  \draw (2.4,-0.6) -- (2.4,-1.4);
  \draw (1.2,-0.6) .. controls (1.2,-1) and (0.6,-1) .. (0.6,-1.4);
\draw (1.2,-0.6) .. controls (1.2,-1) and (0.6,-1) .. (0.6,-1.4);
 \draw[line width=2pt, white, smooth] (0.9,-0.9) -- (0.9,-1.1);
  \draw (0.6,-0.6) .. controls (0.6,-1) and (1.2,-1) .. (1.2,-1.4);

\draw[thin, dotted] (-1,-1.4) -- (2.8,-1.4);

% multiplication on the left
\node[draw, inner sep=1.6pt, shape=regular polygon, regular polygon sides=4] (node3) at (0,-2) {$\mu$};
\draw (0,-2.8) -- (node3.south);

  \draw (-0.6,-1.4) .. controls (-0.6,-1.7) and (-0.5,-2) .. (node3.west);
  \draw (0.6,-1.4) .. controls (0.6,-1.7) and (0.5,-2) .. (node3.east);

%multiplicationon the right
\node[draw, inner sep=1.6pt, shape=regular polygon, regular polygon sides=4] (node4) at (1.8,-2) {$\mu$};
\draw (1.8,-2.8) -- (node4.south);

  \draw (1.2,-1.4) .. controls (1.2,-1.7) and (1.3,-2) .. (node4.west);
  \draw (2.4,-1.4) .. controls (2.4,-1.7) and (2.3,-2) .. (node4.east);

\draw[thin, dotted] (-1,-2.8) -- (2.8,-2.8);

%braid
  \draw (1.8,-2.8) .. controls (1.8,-3.8) and (0,-3.2) .. (0,-4.2);
  \draw[line width=6pt, white, smooth] (0.9,-3.4) -- (0.9,-3.6);
  \draw (0,-2.8) .. controls (0,-3.8) and (1.8,-3.2) .. (1.8,-4.2);

% initial and final nodes

  \node[draw=white] at (0,1.1) {$\Gamma^2(U)$};
\node[draw=white] at (1.8,1.1) {$\Gamma^2(U)$};
\node[draw=white] at (0,-4.4) {$U$};
\node[draw=white] at (1.8,-4.4) {$U$};
\end{tikzpicture}
\]
which, by naturality of the braiding, equals
\[
\begin{tikzpicture}[scale=0.8,baseline=-9ex]
  % left iota_U 
  \node[draw, inner sep=.1pt, shape=regular polygon, regular polygon sides=4] (node1) at (0,0) {$\iota_U$};
  \draw (0,.8) -- (node1.north);

  \draw (node1.west) .. controls (-0.5,-0.1) and (-0.6,-0.3) .. (-0.6,-0.6);
  \draw (node1.east) .. controls (0.5,-0.1) and (0.6,-0.3) .. (0.6,-0.6);
  % right iota_u 
\node[draw, inner sep=.1pt, shape=regular polygon, regular polygon sides=4] (node2) at (1.8,0) {$\iota_U$};
  \draw (1.8,.8) -- (node2.north);

  \draw (node2.west) .. controls (1.3,-0.1) and (1.2,-0.3) .. (1.2,-0.6);
  \draw (node2.east) .. controls (2.3,-0.1) and (2.4,-0.3) .. (2.4,-0.6);
  
  \draw[thin, dotted] (-1,-0.6) -- (2.8,-0.6);
 % braid in 23
  \draw (-0.6,-0.6) -- (-0.6,-1.4);
  \draw (2.4,-0.6) -- (2.4,-1.4);
  \draw (1.2,-0.6) .. controls (1.2,-1) and (0.6,-1) .. (0.6,-1.4);
\draw (1.2,-0.6) .. controls (1.2,-1) and (0.6,-1) .. (0.6,-1.4);
 \draw[line width=2pt, white, smooth] (0.9,-0.9) -- (0.9,-1.1);
  \draw (0.6,-0.6) .. controls (0.6,-1) and (1.2,-1) .. (1.2,-1.4);

\draw[thin, dotted] (-1,-1.4) -- (2.8,-1.4);

%braid
  \draw (1.2,-1.4) .. controls (1.2,-2.3) and (-0.6,-1.8) .. (-0.6,-2.8);
  \draw (2.4,-1.4) .. controls (2.4,-2.3) and (0.6,-1.8) .. (0.6,-2.8);
 \draw[line width=3pt, white, smooth] (0.9,-1.7) -- (0.9,-2.5);
 \draw[line width=4pt, white, smooth] (1.5,-1.9) -- (1.5,-2.2);
\draw[line width=4pt, white, smooth] (0.3,-1.9) -- (0.3,-2.2);
  \draw (-0.6,-1.4) .. controls (-0.6,-2.3) and (1.2,-1.8) .. (1.2,-2.8);
  \draw (0.6,-1.4) .. controls (0.6,-2.3) and (2.4,-1.8) .. (2.4,-2.8);

\draw[thin, dotted] (-1,-2.8) -- (2.8,-2.8);

% multiplication on the left
\node[draw, inner sep=1.6pt, shape=regular polygon, regular polygon sides=4] (node3) at (0,-3.4) {$\mu$};
\draw (0,-4.2) -- (node3.south);

  \draw (-0.6,-2.8) .. controls (-0.6,-3.1) and (-0.5,-3.4) .. (node3.west);
  \draw (0.6,-2.8) .. controls (0.6,-3.1) and (0.5,-3.4) .. (node3.east);

%multiplication on the right
\node[draw, inner sep=1.6pt, shape=regular polygon, regular polygon sides=4] (node4) at (1.8,-3.4) {$\mu$};
\draw (1.8,-4.2) -- (node4.south);

  \draw (1.2,-2.8) .. controls (1.2,-3.1) and (1.3,-3.4) .. (node4.west);
  \draw (2.4,-2.8) .. controls (2.4,-3.1) and (2.3,-3.4) .. (node4.east);

% initial and final nodes

  \node[draw=white] at (0,1.1) {$\Gamma^2(U)$};
\node[draw=white] at (1.8,1.1) {$\Gamma^2(U)$};
\node[draw=white] at (0,-4.4) {$U$};
\node[draw=white] at (1.8,-4.4) {$U$};

\end{tikzpicture}
\]
But $c_{U,U}^2=\id$, so this becomes
\[
\begin{tikzpicture}[scale=0.8,baseline=-9ex]
  % left iota_U 
  \node[draw, inner sep=.1pt, shape=regular polygon, regular polygon sides=4] (node1) at (0,0) {$\iota_U$};
  \draw (0,.8) -- (node1.north);

  \draw (node1.west) .. controls (-0.5,-0.1) and (-0.6,-0.3) .. (-0.6,-0.6);
  \draw (node1.east) .. controls (0.5,-0.1) and (0.6,-0.3) .. (0.6,-0.6);
  % right iota_u 
\node[draw, inner sep=.1pt, shape=regular polygon, regular polygon sides=4] (node2) at (1.8,0) {$\iota_U$};
  \draw (1.8,.8) -- (node2.north);

  \draw (node2.west) .. controls (1.3,-0.1) and (1.2,-0.3) .. (1.2,-0.6);
  \draw (node2.east) .. controls (2.3,-0.1) and (2.4,-0.3) .. (2.4,-0.6);
  
  \draw[thin, dotted] (-1,-0.6) -- (2.8,-0.6);
 % braid in 23
  \draw (-0.6,-0.6) -- (-0.6,-1.4);
  \draw (0.6,-0.6) -- (0.6,-1.4);
  \draw (1.2,-0.6) -- (1.2,-1.4);
  \draw (2.4,-0.6) -- (2.4,-1.4);

\draw[thin, dotted] (-1,-1.4) -- (2.8,-1.4);

%braid
  \draw (0.6,-1.4) .. controls (0.6,-2) and (-0.6,-2.2) .. (-0.6,-2.8);
  \draw (2.4,-1.4) .. controls (2.4,-2.4) and (0.6,-1.8) .. (0.6,-2.8);
 \draw[line width=3pt, white, smooth] (0.07,-1.9) -- (0.07,-2.2);
 \draw[line width=3pt, white, smooth] (1.7,-1.85) -- (1.7,-2.22);
\draw[line width=3pt, white, smooth] (0.9,-2.1) -- (0.9,-2.4);
  \draw (-0.6,-1.4) .. controls (-0.6,-2.4) and (1.2,-1.8) .. (1.2,-2.8);
  \draw (1.2,-1.4) .. controls (1.2,-2) and (2.4,-2.2) .. (2.4,-2.8);

\draw[thin, dotted] (-1,-2.8) -- (2.8,-2.8);

% multiplication on the left
\node[draw, inner sep=1.6pt, shape=regular polygon, regular polygon sides=4] (node3) at (0,-3.4) {$\mu$};
\draw (0,-4.2) -- (node3.south);

  \draw (-0.6,-2.8) .. controls (-0.6,-3.1) and (-0.5,-3.4) .. (node3.west);
  \draw (0.6,-2.8) .. controls (0.6,-3.1) and (0.5,-3.4) .. (node3.east);

%multiplication on the right
\node[draw, inner sep=1.6pt, shape=regular polygon, regular polygon sides=4] (node4) at (1.8,-3.4) {$\mu$};
\draw (1.8,-4.2) -- (node4.south);

  \draw (1.2,-2.8) .. controls (1.2,-3.1) and (1.3,-3.4) .. (node4.west);
  \draw (2.4,-2.8) .. controls (2.4,-3.1) and (2.3,-3.4) .. (node4.east);

% initial and final nodes

  \node[draw=white] at (0,1.1) {$\Gamma^2(U)$};
\node[draw=white] at (1.8,1.1) {$\Gamma^2(U)$};
\node[draw=white] at (0,-4.4) {$U$};
\node[draw=white] at (1.8,-4.4) {$U$};

\end{tikzpicture}
\]
and since $c_{U,U}\circ\iota_U=\iota_U$, the above diagram equals
\[
\begin{tikzpicture}[scale=0.8,baseline=-9ex]
  % left iota_U 
  \node[draw, inner sep=.1pt, shape=regular polygon, regular polygon sides=4] (node1) at (0,0) {$\iota_U$};
  \draw (0,.8) -- (node1.north);

  \draw (node1.west) .. controls (-0.5,-0.1) and (-0.6,-0.3) .. (-0.6,-0.6);
  \draw (node1.east) .. controls (0.5,-0.1) and (0.6,-0.3) .. (0.6,-0.6);
  % right iota_u 
\node[draw, inner sep=.1pt, shape=regular polygon, regular polygon sides=4] (node2) at (1.8,0) {$\iota_U$};
  \draw (1.8,.8) -- (node2.north);

  \draw (node2.west) .. controls (1.3,-0.1) and (1.2,-0.3) .. (1.2,-0.6);
  \draw (node2.east) .. controls (2.3,-0.1) and (2.4,-0.3) .. (2.4,-0.6);
  
  \draw[thin, dotted] (-1,-0.6) -- (2.8,-0.6);
 % braid in 23
  \draw (-0.6,-0.6) -- (-0.6,-1.4);
  \draw (0.6,-0.6) -- (0.6,-1.4);
  \draw (1.2,-0.6) -- (1.2,-1.4);
  \draw (2.4,-0.6) -- (2.4,-1.4);

\draw[thin, dotted] (-1,-1.4) -- (2.8,-1.4);

%braid
  \draw (-0.6,-1.4) -- (-0.6,-2.8);
  \draw (2.4,-1.4) -- (2.4,-2.8);

  \draw (1.2,-1.4) .. controls (1.2,-2.4) and (0.6,-1.8) .. (0.6,-2.8);

\draw[line width=3pt, white, smooth] (0.9,-2) -- (0.9,-2.2);
  \draw (0.6,-1.4) .. controls (0.6,-2.4) and (1.2,-1.8) .. (1.2,-2.8);

\draw[thin, dotted] (-1,-2.8) -- (2.8,-2.8);

% multiplication on the left
\node[draw, inner sep=1.6pt, shape=regular polygon, regular polygon sides=4] (node3) at (0,-3.4) {$\mu$};
\draw (0,-4.2) -- (node3.south);

  \draw (-0.6,-2.8) .. controls (-0.6,-3.1) and (-0.5,-3.4) .. (node3.west);
  \draw (0.6,-2.8) .. controls (0.6,-3.1) and (0.5,-3.4) .. (node3.east);

%multiplication on the right
\node[draw, inner sep=1.6pt, shape=regular polygon, regular polygon sides=4] (node4) at (1.8,-3.4) {$\mu$};
\draw (1.8,-4.2) -- (node4.south);

  \draw (1.2,-2.8) .. controls (1.2,-3.1) and (1.3,-3.4) .. (node4.west);
  \draw (2.4,-2.8) .. controls (2.4,-3.1) and (2.3,-3.4) .. (node4.east);

% initial and final nodes

  \node[draw=white] at (0,1.1) {$\Gamma^2(U)$};
\node[draw=white] at (1.8,1.1) {$\Gamma^2(U)$};
\node[draw=white] at (0,-4.4) {$U$};
\node[draw=white] at (1.8,-4.4) {$U$};

\end{tikzpicture}
\]
which corresponds to $\hat\mu\circ(\iota_U\otimes\iota_U)$. Hence we get 
$c_{U,U}\circ\hat\mu\circ(\iota_U\otimes\iota_U)=\hat\mu\circ(\iota_U\otimes\iota_U)$, that is:
\[
(\id -c_{U,U})\circ\hat\mu\circ(\iota_U\otimes\iota_U)=0.
\]
Now it is enough to use that $(\Gamma^2(U),\iota_U)$ is the kernel of $\id -c_{U,U}$.
\end{proof}

In other words, the multiplication $\mu$ in $U$ induces naturally a multiplication $\hat\mu$ in $U\otimes U$, that
restricts to a multiplication $\tilde\mu$ on $\Gamma^2(U)$.

\bigskip

We are now ready to define a composition algebra. Recall (Definition \ref{df:QB}) that a quadratic form $\Qup$ on $U$ is a morphism
$\Qup\colon \Gamma^2(U)\rightarrow \Buno$. Its associated (symmetric) bilinear form $\Bup$ is the morphism 
obtained by composition
\[
U\otimes U\xrightarrow{\rho_U}\Gamma^2(U)\xrightarrow{\ \Qup\ }\Buno,
\]
where $\rho_U$ is the only morphism $U\otimes U\rightarrow \Gamma^2(U)$ such that $\iota_U\circ \rho_U=
\id+c_{U,U}$. The quadratic form $\Qup$ is said to be \emph{nondegenerate} if so is $\Bup$. 

\begin{definition}\label{df:compoSTC}
Let $\cC$ be a symmetric tensor category.
\begin{itemize}
\item Let $U$ be an object in $\cC$ endowed with a quadratic form 
$\Qup\colon U\rightarrow\Buno$ and a multiplication $\mu\colon U\otimes U\rightarrow U$. Let $\tilde \mu$ be
the induced multiplication in $\Gamma^2(U)$ as in Theorem \ref{th:mutilde}. Then the quadratic form $\Qup$ is 
said to be \emph{multiplicative} (relative to $\mu$) if 
\begin{equation}\label{eq:multiplicative}
\Qup\circ\tilde\mu=\Qup\otimes \Qup\colon \Gamma^2(U)\otimes\Gamma^2(U)\rightarrow \Buno.
\end{equation}
(In other words, $\Qup\colon \Gamma^2(U)\rightarrow \Buno$ is a morphism of algebras, where $\Buno$ has
the trivial multiplication.)

\item A \emph{composition algebra} in $\cC$ is a triple $(U,\mu,\Qup)$, where $U$ is an object in $\cC$, $\mu$
a multiplication in $U$, and $\Qup$ a nondegenerate multiplicative quadratic form on $U$ relative to $\mu$.

\item A \emph{unital composition algebra} in $\cC$ is a quadruple $(U,\mu,\Qup,1_U)$, where $(U,\mu,\Qup)$ is
a composition algebra and $1_U\colon \Buno\rightarrow U$ is a morphism in $\cC$ such that
$\mu\circ(1_U\otimes \id_U)=\id_U=\mu\circ(\id_U\otimes 1_U)$.
\end{itemize}
\end{definition}

It is useful to deal with the \emph{linearizations} of the multiplicative property $\Qup\circ\tilde\mu=\Qup\otimes \Qup$
in \eqref{eq:multiplicative}.

\begin{lemma}\label{le:firstlinearization}
Let $(U,\mu,\Qup)$ be a composition algebra in a symmetric tensor category $\cC$, then we have the equality
\[
\tilde\mu\circ (\id\otimes \rho_U)=\rho_U\circ\hat\mu\circ(\iota_U\otimes\id)
\]
of morphisms $\Gamma^2(U)\otimes U\otimes U\rightarrow \Gamma^2(U)$.

In the same vein, we too have the equality
\[
\tilde\mu\circ (\rho_U\otimes \id)=\rho_U\circ\hat\mu\circ(\id\otimes\iota_U).
\]
\end{lemma}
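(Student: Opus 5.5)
The plan is to verify the first identity after composing both sides with the monomorphism $\iota_U$. It then suffices to show
\[
\iota_U\circ\tilde\mu\circ(\id\otimes\rho_U)=\iota_U\circ\rho_U\circ\hat\mu\circ(\iota_U\otimes\id).
\]
Neither side uses $\Qup$, so the identity is really a statement about $\mu$, the braiding and $\Gamma^2(U)$; the composition hypothesis plays no role.

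For the left-hand side, Theorem \ref{th:mutilde} and \eqref{eq:rhoU} give
\[
\iota_U\circ\tilde\mu\circ(\id\otimes\rho_U)=\hat\mu\circ(\iota_U\otimes\iota_U)\circ(\id\otimes\rho_U)=\hat\mu\circ\bigl(\iota_U\otimes(\id+c_{U,U})\bigr).
\]
For the right-hand side, \eqref{eq:rhoU} gives
\[
\iota_U\circ\rho_U\circ\hat\mu\circ(\iota_U\otimes\id)=(\id+c_{U,U})\circ\hat\mu\circ(\iota_U\otimes\id).
\]
The terms $\hat\mu\circ(\iota_U\otimes\id)$ appear on both sides and cancel. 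The claim is therefore reduced to the single identity
\[
c_{U,U}\circ\hat\mu\circ(\iota_U\otimes\id_{U\otimes U})=\hat\mu\circ(\iota_U\otimes c_{U,U}).
\]
In $\vect$ this is the computation $c(ax\otimes by)=by\otimes ax$. Interchanging $a$ and $b$, which is allowed because $a\otimes b$ lies in the symmetric part, turns this into $ay\otimes bx=\hat\mu(a\otimes b\otimes y\otimes x)$.

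To prove the reduced identity categorically, I will argue graphically as in the proof of Theorem \ref{th:mutilde}. By naturality of the braiding, $c_{U,U}\circ(\mu\otimes\mu)=(\mu\otimes\mu)\circ c_{U\otimes U,U\otimes U}$. Then $c_{U,U}\circ\hat\mu$ becomes $\mu\otimes\mu$ precomposed with a braid on four strands. Next I precompose with $\iota_U\otimes\id$ and insert $\iota_U=c_{U,U}\circ\iota_U$ on the first two strands. Since $\cC$ is symmetric ($c^2=\id$ together with the hexagon axioms), two braids on four strands agree as soon as their underlying permutations agree. So it remains to check that the resulting braid has the same underlying permutation as $(\id\otimes c_{U,U}\otimes\id)\circ(\id\otimes\id\otimes c_{U,U})$, which is the braid occurring in $\hat\mu\circ(\iota_U\otimes c_{U,U})$. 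Both send the inputs $(a,b,x,y)$ to the positions $(a,y,b,x)$, as in the element computation above.

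The second identity follows by the mirror-image argument. This time the inserted $c_{U,U}\circ\iota_U=\iota_U$ is applied to the second tensor factor, and the reduced identity is $c_{U,U}\circ\hat\mu\circ(\id\otimes\iota_U)=\hat\mu\circ(c_{U,U}\otimes\iota_U)$.

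The main obstacle is the braid bookkeeping in the reduced identity. It consists of applying naturality of $c$ across $\mu\otimes\mu$ and then invoking the coherence fact that, in a symmetric category, a braid is determined by its permutation. Everything else is direct substitution of \eqref{eq:rhoU} and Theorem \ref{th:mutilde}.
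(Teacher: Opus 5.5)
Your proposal is correct and follows the paper's proof essentially step for step. You compose with the monomorphism $\iota_U$, cancel $\hat\mu\circ(\iota_U\otimes\id)$, insert $c_{U,U}\circ\iota_U=\iota_U$, and reduce to the braid identity $c\circ\hat\mu\circ(c\otimes\id)=\hat\mu\circ(\id\otimes c)$; your permutation check $(a,b,x,y)\mapsto(a,y,b,x)$ is right. The only cosmetic difference is that the paper verifies the equivalent identity $c\circ\hat\mu=\hat\mu\circ(c\otimes c)$ by an explicit diagram computation using naturality and $c^2=\id$, rather than citing symmetric coherence. Your remark that $\Qup$ plays no role is also accurate, since the paper's argument likewise uses only $\mu$ and the braiding.
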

\begin{proof}
As $\iota_U$ is a monomorphism, it is enough to prove
\begin{equation}\label{eq:iumu}
\iota_U\circ\tilde\mu\circ (\id\otimes \rho_U)=\iota_U\circ\rho_U\circ\hat\mu\circ(\iota_U\otimes\id).
\end{equation}
But $\iota_U\circ\rho_U=\id+c$ (see \eqref{eq:rhoU}), where $c=c_{U,U}$, so the right hand side equals
\[
(\id + c)\circ\hat\mu\circ(\iota_U\otimes \id).
\] 
(Note that we are using the symbol $\id$ for the identity morphism
on various objects, this should create no confusion.)

On the other hand, for the left hand side in \eqref{eq:iumu}, we have
\[
\begin{split}
\iota_U\circ\tilde\mu\circ (\id\otimes \rho_U)&=
   \hat\mu\circ(\iota_U\otimes\iota_U)\circ(\id\otimes\rho_U)\quad \text{(Theorem 2.1)}\\
   &=\hat\mu\circ(\iota_U\otimes(\id +c))=\hat\mu\circ(\id\otimes(\id+c))\circ(\iota_U\otimes \id),
\end{split}
\]
and, hence, it is enough to check the equality
\[
\hat\mu\circ(\id\otimes c)\circ(\iota_U\otimes\id)=c\circ\hat\mu\circ(\iota_U\otimes\id),
\]
and, since $c\circ\iota_U=\iota_U$, this amounts to the equality
\[
\hat\mu\circ(\id\otimes c)\circ(\iota_U\otimes\id)=c\circ\hat\mu\circ(c\otimes\id)\circ(\iota_U\otimes\id).
\]
Therefore, it is enough to prove the equality
\[
\hat\mu\circ(\id\otimes c)=c\circ\hat\mu\circ(c\otimes\id),
\]
or, composing on the right by $c\otimes\id$, the more symmetric equality
\begin{equation}\label{eq:hatmucs}
\hat\mu\circ(c\otimes c)=c\circ\hat\mu.
\end{equation}
But this is easy:
\[
c\circ\hat\mu=\
\begin{tikzpicture}[scale=0.8,baseline=7ex]
% c23

\draw (-0.6,3) -- (-0.6,2.2);
\draw (2.4,3) -- (2.4,2.2);
  \draw (1.22,3) .. controls (1.2,2.4) and (0.6,2.8) .. (0.6,2.2);
 \draw[line width=4pt, white, smooth] (0.9,2.8) -- (0.9,2.4);
 \draw (0.6,3) .. controls (0.6,2.4) and (1.2,2.8) .. (1.2,2.2);

\draw[thin, dotted] (-1,2.2) -- (2.8,2.2);

% multiplication on the left
\node[draw, inner sep=1.6pt, shape=regular polygon, regular polygon sides=4] (node3) at (0,1.6) {$\mu$};
\draw (0,0.8) -- (node3.south);

  \draw (-0.6,2.2) .. controls (-0.6,1.9) and (-0.5,1.6) .. (node3.west);
  \draw (0.6,2.2) .. controls (0.6,1.9) and (0.5,1.6) .. (node3.east);

%multiplication on the right
\node[draw, inner sep=1.6pt, shape=regular polygon, regular polygon sides=4] (node4) at (1.8,1.6) {$\mu$};
\draw (1.8,0.8) -- (node4.south);

  \draw (1.2,2.2) .. controls (1.2,1.9) and (1.3,1.6) .. (node4.west);
  \draw (2.4,2.2) .. controls (2.4,1.9) and (2.3,1.6) .. (node4.east);

\draw[thin, dotted] (-1,0.8) -- (2.8, 0.8);

% braiding

  \draw (1.8,0.8) .. controls (1.8,-0.2) and (0,0.6) .. (0,-0.4);
\draw[line width=8pt, white, smooth] (0.9,0.3) -- (0.9,0.1);
  \draw (0,0.8) .. controls (0,-0.2) and (1.8,0.6) .. (1.8,-0.4);
\end{tikzpicture}
\quad =\quad
\begin{tikzpicture}[scale=0.8,baseline=7ex]
% c23

\draw (-0.6,3) -- (-0.6,2.2);
\draw (2.4,3) -- (2.4,2.2);
  \draw (1.22,3) .. controls (1.2,2.4) and (0.6,2.8) .. (0.6,2.2);
 \draw[line width=4pt, white, smooth] (0.9,2.8) -- (0.9,2.4);
 \draw (0.6,3) .. controls (0.6,2.4) and (1.2,2.8) .. (1.2,2.2);

\draw[thin, dotted] (-1,2.2) -- (2.8,2.2);

%braiding

  \draw (2.4,2.2) .. controls (2.4,1.2) and (0.6,1.8) .. (0.6,0.8);
  \draw (1.2,2.2) .. controls (1.2,1.2) and (-0.6,1.8) .. (-0.6,0.8);
\draw[line width=3pt, white, smooth] (0.9,1.8) -- (0.9,1.2);
\draw[line width=4pt, white, smooth] (0.3,1.6) -- (0.3,1.4);
\draw[line width=4pt, white, smooth] (1.5,1.6) -- (1.5,1.4);
  \draw (-0.6,2.2) .. controls (-0.6,1.2) and (1.2,1.8) .. (1.2,0.8);
  \draw (0.6,2.2) .. controls (0.6,1.2) and (2.4,1.8) .. (2.4,0.8);

\draw[thin, dotted] (-1,0.8) -- (2.8, 0.8);

% multiplication on the left
\node[draw, inner sep=1.6pt, shape=regular polygon, regular polygon sides=4] (node3) at (0,0.2) {$\mu$};
\draw (0,-0.6) -- (node3.south);

  \draw (-0.6,0.8) .. controls (-0.6,0.5) and (-0.5,0.2) .. (node3.west);
  \draw (0.6,0.8) .. controls (0.6,0.5) and (0.5,0.2) .. (node3.east);

%multiplication on the right
\node[draw, inner sep=1.6pt, shape=regular polygon, regular polygon sides=4] (node4) at (1.8,0.2) {$\mu$};
\draw (1.8,-0.6) -- (node4.south);

  \draw (1.2,0.8) .. controls (1.2,0.5) and (1.3,0.2) .. (node4.west);
  \draw (2.4,0.8) .. controls (2.4,0.5) and (2.3,0.2) .. (node4.east);

\end{tikzpicture}
\ =\
\begin{tikzpicture}[scale=0.8,baseline=7ex]
% identity

\draw (-0.6,3) -- (-0.6,2.2);
\draw (0.6,3) -- (0.6,2.2);
\draw (1.2,3) -- (1.2,2.2);
\draw (2.4,3) -- (2.4,2.2);

\draw[thin, dotted] (-1,2.2) -- (2.8,2.2);

%braiding

  \draw (2.4,2.2) .. controls (2.4,1.4) and (0.6,1.3) .. (0.6,0.8);
  \draw (0.6,2.2) .. controls (0.6,1.2) and (-0.6,1.8) .. (-0.6,0.8);
\draw[line width=4pt, white, smooth] (0.9,1.25) -- (0.9,1);
\draw[line width=4pt, white, smooth] (0,1.6) -- (0,1.4);
\draw[line width=4pt, white, smooth] (1.75,1.6) -- (1.75,1.4);
  \draw (-0.6,2.2) .. controls (-0.6,1.4) and (1.2,1.3) .. (1.2,0.8);
\draw (1.2,2.2) .. controls (1.2,1.2) and (2.4,1.8) .. (2.4,0.8);

\draw[thin, dotted] (-1,0.8) -- (2.8, 0.8);

% multiplication on the left
\node[draw, inner sep=1.6pt, shape=regular polygon, regular polygon sides=4] (node3) at (0,0.2) {$\mu$};
\draw (0,-0.6) -- (node3.south);

  \draw (-0.6,0.8) .. controls (-0.6,0.5) and (-0.5,0.2) .. (node3.west);
  \draw (0.6,0.8) .. controls (0.6,0.5) and (0.5,0.2) .. (node3.east);

%multiplication on the right
\node[draw, inner sep=1.6pt, shape=regular polygon, regular polygon sides=4] (node4) at (1.8,0.2) {$\mu$};
\draw (1.8,-0.6) -- (node4.south);

  \draw (1.2,0.8) .. controls (1.2,0.5) and (1.3,0.2) .. (node4.west);
  \draw (2.4,0.8) .. controls (2.4,0.5) and (2.3,0.2) .. (node4.east);

\end{tikzpicture}
\]
where the second equality is given by the naturality of the brading, and the third equality follows from $c^2=\id$. 
The last diagram corresponds to $\hat\mu\circ(c\otimes c)$, as required.
\end{proof}

We are ready for the linearizations of the multiplicative property.

\begin{proposition}\label{pr:linearizations}
Let $(U,\mu)$ be an algebra in a symmetric tensor category $\cC$, and let $\Qup$ be a multiplicative 
quadratic form on $U$
(i.e.,  Equation \eqref{eq:multiplicative} holds). Let $\Bup$ be the symmetric
bilinear form associated to $\Qup$ ($\Bup=\Qup\circ\rho_U$). Then the following equalities hold:
\begin{itemize}
\item $\Qup\otimes \Bup=\Bup\circ\hat\mu\circ(\iota_U\otimes \id)$ \emph{(right partial linearization)}.
\item $\Bup\otimes \Qup=\Bup\circ\hat\mu\circ(\id\otimes \iota_U)$ \emph{(left partial linearization)}.
\item $\Bup\otimes \Bup=\Bup\circ\hat\mu\circ((\id +c)\otimes\id)=\Bup\circ\hat\mu\circ(\id\otimes(\id+c))$
\emph{(full linearization)}.
\end{itemize}
\end{proposition}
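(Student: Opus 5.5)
The plan is to precompose the multiplicative identity \eqref{eq:multiplicative} with $\id\otimes\rho_U$, $\rho_U\otimes\id$, or $\rho_U\otimes\rho_U$, and then rewrite the left hand side with Lemma \ref{le:firstlinearization}. Note that Lemma \ref{le:firstlinearization} only uses the existence of $\tilde\mu$ from Theorem \ref{th:mutilde}, not nondegeneracy, so it applies to any algebra $(U,\mu)$.

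\emph{Right partial linearization.} Precompose $\Qup\circ\tilde\mu=\Qup\otimes\Qup$ with $\id_{\Gamma^2(U)}\otimes\rho_U$. The right hand side becomes $\Qup\otimes(\Qup\circ\rho_U)=\Qup\otimes\Bup$. For the left hand side, the first equality of Lemma \ref{le:firstlinearization} gives
\[
\Qup\circ\tilde\mu\circ(\id\otimes\rho_U)=\Qup\circ\rho_U\circ\hat\mu\circ(\iota_U\otimes\id)=\Bup\circ\hat\mu\circ(\iota_U\otimes\id).
\]
\emph{Left partial linearization.} This is symmetric: precompose with $\rho_U\otimes\id$ and use the second equality of the lemma.

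\emph{Full linearization.} Precompose the right partial linearization with $\rho_U\otimes\id_{U\otimes U}$. The left side becomes $(\Qup\circ\rho_U)\otimes\Bup=\Bup\otimes\Bup$. The right side becomes $\Bup\circ\hat\mu\circ(\iota_U\rho_U\otimes\id)$, which by \eqref{eq:rhoU} equals $\Bup\circ\hat\mu\circ((\id+c)\otimes\id)$. Likewise, precomposing the left partial linearization with $\id\otimes\rho_U$ gives $\Bup\otimes\Bup=\Bup\circ\hat\mu\circ(\id\otimes(\id+c))$. Both expressions equal $\Bup\otimes\Bup$, so they agree. As an independent consistency check, \eqref{eq:hatmucs} together with $\Bup\circ c=\Bup$ gives $\Bup\circ\hat\mu\circ(c\otimes\id)=\Bup\circ\hat\mu\circ(\id\otimes c)$, which matches the two forms.

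The only delicate point is bookkeeping. We must check that $\Qup\otimes\Qup$ precomposed with $\id\otimes\rho_U$ is really $\Qup\otimes\Bup$ under the identification $\Buno\otimes\Buno\cong\Buno$, and that the tensor factors are ordered consistently with the statement. Both follow from the interchange law $(f\otimes g)\circ(h\otimes k)=(fh)\otimes(gk)$. Given Lemma \ref{le:firstlinearization}, there is no genuine obstacle.
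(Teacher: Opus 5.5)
Your proposal is correct and follows the paper's proof essentially verbatim. You precompose the multiplicative identity with $\id\otimes\rho_U$ (resp.\ $\rho_U\otimes\id$) and apply Lemma~\ref{le:firstlinearization}, then precompose the partial linearizations with $\rho_U\otimes\id$ (resp.\ $\id\otimes\rho_U$) and use $\iota_U\circ\rho_U=\id+c$. You also note that Lemma~\ref{le:firstlinearization} does not use nondegeneracy, although it is stated for composition algebras; this is accurate, and it is what justifies applying the lemma under the weaker hypotheses of the proposition, which the paper does only implicitly.
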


\begin{remark}
Note that in the category of vector spaces, the right  and left partial linearizations correspond to the usual formulas
\[
\Qup(x)\Bup(y,z)=\Bup(xy,xz),\quad \Bup(y,z)\Qup(x)=\Bup(yx,zx),\quad\forall x,y,z,
\]
while the full linearization corresponds to the formula
\[
\Bup(xz,yt)+\Bup(xt,yz)=\Bup(x,y)\Bup(z,t)\quad\forall x,y,z,t.
\]
\end{remark}

\begin{proof}[Proof of Proposition \ref{pr:linearizations}]
For the right linearization note that
\[
\begin{split}
\Qup\otimes \Bup&=(\Qup\otimes \Qup)\circ(\id\otimes\rho_U)\quad \text{(definition of $\Bup$)}\\
 &=\Qup\circ\tilde\mu\circ(\id\otimes\rho_U)\quad \text{by \eqref{eq:multiplicative}}\\
 &=\Qup\circ\rho_U\circ\hat\mu\circ(\iota_U\otimes\id)\quad\text{(Lemma \ref{le:firstlinearization})}\\
 &=\Bup\circ\hat\mu\circ(\iota_U\otimes\id).
\end{split}
\]
The left linearization is proven along the same lines.

For the full linearization we proceed as follows:
\[
\begin{split}
\Bup\otimes \Bup&=(\Qup\otimes \Bup)\circ(\rho_U\otimes \id)\quad\text{(definition of $\Bup$)}\\
 &=\Bup\circ\hat\mu\circ(\iota_U\circ\rho_U\otimes\id)\quad\text{(right linearization)}\\
 &=\Bup\circ\hat\mu\circ((\id + c)\otimes\id),
\end{split}
\]
and, in the same vein, but using the left linearization one gets 
$\Bup\otimes \Bup=\Bup\circ\hat\mu\circ(\id\otimes(\id + c))$, as required.
\end{proof}

\begin{remark} Under the hypotheses of Proposition \ref{pr:linearizations}, note that since $\Bup$ is symmetric
we get
\[
\Bup\circ\hat\mu\circ(c\otimes \id)=\Bup\circ c\circ\hat\mu\circ(c\otimes\id),
\]
and since $c\circ\hat\mu=\hat\mu\circ (c\otimes c)$ by \eqref{eq:hatmucs}, we get
$c\circ\hat\mu\circ(c\otimes\id)=\hat\mu\circ(\id\otimes c)$, and hence we obtain a direct proof of
\[
\Bup\circ\hat\mu\circ(c\otimes \id)=\Bup\circ\hat\mu\circ(\id\otimes c).
\]
\end{remark}

Under some circumstances, the full linearization of the multiplicative property \eqref{eq:multiplicative} 
implies the multiplicative property itself.

\begin{proposition}\label{pr:full_multiplicative}
Let $(U,\mu)$ be an algebra in a symmetric tensor category $\cC$ over a field $\FF$, 
and let $\Qup$ be a quadratic form on $U$
and $\Bup$ the associated symmetric bilinear form. Let $\hat\mu$ and $\tilde\mu$ be the multiplications
induced on $U\otimes U$ and on $\Gamma^2(U)$. If either
\begin{itemize}
\item the characteristic of $\FF$ is not $2$, or
\item the characteristic of $\FF$ is $2$ and the Frobenius twist is trivial: $U^{(1)}=0$,
\end{itemize}
then the full linearization $\Bup\otimes \Bup=\Bup\circ\hat\mu\circ((\id +c)\otimes\id)$ implies the
multiplicative property of $\Qup$: $\Qup\otimes \Qup=\Qup\circ\tilde\mu$.
\end{proposition}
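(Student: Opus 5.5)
In both cases listed in the statement, $\rho_U\colon U\otimes U\rightarrow\Gamma^2(U)$ is an epimorphism: by Proposition \ref{pr:rhoUnot2} if the characteristic is not $2$, and by Proposition \ref{pr:rhoU2} if the characteristic is $2$ and $U^{(1)}=0$. Then $\rho_U\otimes\rho_U$ is an epimorphism as well, since in a tensor category over a field $\otimes$ is exact in each variable and epimorphisms compose. So I would prove the multiplicative property after precomposing with $\rho_U\otimes\rho_U$, that is, I would show
\[
\Qup\circ\tilde\mu\circ(\rho_U\otimes\rho_U)=(\Qup\otimes\Qup)\circ(\rho_U\otimes\rho_U)=\Bup\otimes\Bup .
\]

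To compute the left-hand side, I use the defining property of $\tilde\mu$ from Theorem \ref{th:mutilde}:
\[
\iota_U\circ\tilde\mu\circ(\rho_U\otimes\rho_U)=\hat\mu\circ(\iota_U\rho_U\otimes\iota_U\rho_U)=\hat\mu\circ\bigl((\id+c)\otimes(\id+c)\bigr).
\]
The image of this morphism lies in $\Gamma^2(U)$. This matches the identity $c\circ\hat\mu=\hat\mu\circ(c\otimes c)$ in \eqref{eq:hatmucs}, which shows the right-hand side is $c$-invariant.

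Next I want to rewrite $\hat\mu\circ((\id+c)\otimes(\id+c))$ as $\iota_U\circ\rho_U\circ\hat\mu\circ((\id+c)\otimes\id)$, which means checking
\[
(\id+c)\circ\hat\mu\circ((\id+c)\otimes\id)=\hat\mu\circ\bigl((\id+c)\otimes(\id+c)\bigr).
\]
Using \eqref{eq:hatmucs}, the left side equals $\hat\mu\circ((\id+c)\otimes\id)+\hat\mu\circ\bigl((c+\id)\otimes c\bigr)$, since $(c\otimes c)\circ((\id+c)\otimes\id)=(c+\id)\otimes c$ because $c^2=\id$. This sum is exactly $\hat\mu\circ((\id+c)\otimes(\id+c))$. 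Since $\iota_U$ is a monomorphism, this gives
\[
\tilde\mu\circ(\rho_U\otimes\rho_U)=\rho_U\circ\hat\mu\circ((\id+c)\otimes\id).
\]
This step is the one that needs care, and it is essentially Lemma \ref{le:firstlinearization} composed with $\rho_U\otimes\id$. However, that lemma was stated under the composition-algebra hypothesis, so I would note that its proof only used \eqref{eq:hatmucs} and $c\circ\iota_U=\iota_U$, and hence holds for any $(U,\mu)$. Either citing it in this form or using the direct computation above works.

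Applying $\Qup$ then gives
\[
\Qup\circ\tilde\mu\circ(\rho_U\otimes\rho_U)=\Bup\circ\hat\mu\circ((\id+c)\otimes\id)=\Bup\otimes\Bup
\]
by the full-linearization hypothesis. We also have $\Bup\otimes\Bup=(\Qup\otimes\Qup)\circ(\rho_U\otimes\rho_U)$. Since $\rho_U\otimes\rho_U$ is an epimorphism, we can cancel it on the right and conclude $\Qup\circ\tilde\mu=\Qup\otimes\Qup$.

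The only point beyond routine manipulation is the epimorphism claim for $\rho_U\otimes\rho_U$. It follows from right exactness of $X\otimes-$ and $-\otimes X$, which holds in a tensor category because the tensor product is biexact. Writing $\rho_U\otimes\rho_U=(\rho_U\otimes\id)\circ(\id\otimes\rho_U)$ exhibits it as a composite of two epimorphisms.
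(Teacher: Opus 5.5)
Your proposal is correct and follows the paper's proof. Both arguments rewrite $\tilde\mu\circ(\rho_U\otimes\rho_U)$ as $\rho_U\circ\hat\mu\circ((\id+c)\otimes\id)$, apply $\Qup$ and the full-linearization hypothesis, and cancel the epimorphism $\rho_U\otimes\rho_U$ supplied by Propositions \ref{pr:rhoUnot2} and \ref{pr:rhoU2}; you also fill in two details the paper leaves implicit, namely that the proof of Lemma \ref{le:firstlinearization} never uses the composition-algebra hypothesis (so it may be applied here), and that $\rho_U\otimes\rho_U$ is an epimorphism by exactness of $\otimes$ in each variable.
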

\begin{proof}
\iffalse
If $\Bup\otimes\Bup=\Bup\circ\hat\mu\circ((\id +c)\otimes \id)$, then composing with $\iota_U\otimes\iota_U$ 
on the right we get 
$(\Bup\otimes\Bup)\circ(\iota_U\otimes\iota_U)
=\Bup\circ\hat\mu\circ((\id +c)\otimes \id)\circ(\iota_U\otimes\iota_U)$. But we have
\[
\begin{split}
(\Bup\otimes\Bup)\circ(\iota_U\otimes\iota_U)
   &=(\Qup\otimes\Qup)\circ(\rho_U\otimes\rho_U)\circ(\iota_U\otimes\iota_U)\\
   &=4(\Qup\otimes\Qup)\quad\text{(because $\rho_U\circ\iota_U=2\id_{\Gamma^2(U)}$)},
\end{split}
\]
while
\[
\begin{split}
\Bup\circ\hat\mu\circ((\id +c)\otimes \id)\circ(\iota_U\otimes\iota_U)
 &=2\Bup\circ\hat\mu\circ(\iota_U\otimes\iota_U)\quad\text{(as $c\circ\iota_U=\iota_U$)}\\
 &=2\Bup\circ\iota_U\circ\tilde\mu\quad\text{(Theorem \ref{th:mutilde})}\\
 &=2\Qup\circ\rho_U\circ\iota_U\circ\tilde\mu\\
 &=4\Qup\circ\tilde\mu\quad\text{(as 
 $\rho_U\circ\iota_U=2\id_{\Gamma^2(U)}$)}.
\end{split}
\]
Hence, if the characteristic of $\FF$ is not $2$, the result follows.

Assume now that the characteristic of the ground field is $2$. 
\fi
The full linearization 
$\Bup\otimes \Bup=\Bup\circ\hat\mu\circ((\id +c)\otimes\id)$ can be written as
\[
(\Qup\otimes\Qup)\circ(\rho_U\otimes\rho_U)=\Qup\circ\rho_U\circ\hat\mu\circ((\id + c)\otimes\id).
\]

But Lemma \ref{le:firstlinearization} gives 
\[
\tilde\mu\circ(\rho_U\otimes\rho_U)=\rho_U\circ\hat\mu\circ((\iota_U\circ\rho_U)\otimes\id)
=\rho_U\circ\hat\mu\circ((\id +c)\otimes \id),
\]
and hence the full linearization gives
\[
(\Qup\otimes\Qup)\circ(\rho_U\otimes\rho_U)=\Qup\circ\tilde\mu\circ(\rho_U\otimes\rho_U).
\]
If either the characteristic of $\FF$ is not $2$ or the characteristic is $2$ but the Frobenius twist $U^{(1)}$ is trivial,
 then $\rho_U$ is an epimorphism (Propositions \ref{pr:rhoUnot2} and \ref{pr:rhoU2}), and so is
$\rho_U\otimes\rho_U$, so the above equation gives $\Qup\otimes \Qup=\Qup\circ\tilde\mu$, as required.
\end{proof}

%-----------------------------

\bigskip

\section{Examples}\label{se:examples}
%Categories of vector spaces and vector superspaces

In this section, the unital composition algebras in the symmetric tensor categories of finite-dimensional
vector spaces $\vect$ or superspaces $\sVec$ will be revisited.

\subsection{Unital composition algebras in \texorpdfstring{$\vect$}{Vec}}\null\quad

In the symmetric tensor category $\vect$ over a field $\FF$, the braiding is given by swapping the components: 
$u\otimes v\mapsto v\otimes u$. Hence, given an object $U$ in $\vect$, $\Gamma^2(U)$ is the subspace
of the symmetric elements
in the usual tensor product $U\otimes_\FF U$. A linear map $\Qup\colon \Gamma^2(U)\rightarrow \Buno=\FF$
is determined by the values $\Qup(u\otimes u)$, with $u\in U$. For $u\in U$, we will write 
$\Qup(u)\bydef \Qup(u\otimes u)$. This gives a map, denoted by the same symbol, $\Qup\colon U\rightarrow \FF$
which is a quadratic form in the usual sense. Besides, if $\Bup\colon U\otimes U\rightarrow \Buno=\FF$ is
the associated bilinear form (Definition \ref{df:QB}), write $\Bup(u,v)\bydef \Bup(u\otimes v)$. This gives
the \emph{polar form} of $\Qup$:
\begin{multline*}
\Bup(u,v)=\Bup(u\otimes v)=\Qup\circ\rho_U(u\otimes v)=\Qup(u\otimes v+v\otimes u)\\
=
\Qup\bigl((u+v)\otimes(u+v)-u\otimes u-v\otimes v\bigr)=\Qup(u+v)-\Qup(u)-\Qup(v),
\end{multline*}
for any $u,v\in U$.

Hence, the unital composition algebras in $\vect$ according to Definition \ref{df:compoSTC} are the unital 
composition algebras (or Hurwitz algebras) in the usual sense (as, for example, in \cite[\S 33.C]{KMRT}), with
just one subtle difference.

In absence of a notion of regular quadratic form as in \cite[p.~xix]{KMRT}, where the quadratic form 
$q\colon U\rightarrow\FF$ is said to be regular if either its polar form is nondegenerate or if the radical
of the polar form has dimension $1$ and $q$ is not trivial on it, our Definition \ref{df:compoSTC} imposes
the associated bilinear form $\Bup$ to be nondegenerate. This is the condition imposed in the
definition of unital composition algebra in \cite[Chapter 2]{ZSSS}. As a consequence, the classification of the unital
composition algebras in $\vect$ is given by \cite[Chapter 2, Theorem 1]{ZSSS}:

\begin{theorem}\label{th:composition_vec}
Any unital composition algebra in the symmetric tensor category $\vect$ over a field $\FF$ is, 
up to isomorphism, one of the following:
\begin{itemize}
\item The ground field $\FF$ if the characteristic is $\neq 2$.
\item An \'etale algebra of dimension $2$ over $\FF$. That is, either $\FF\times\FF$ or a degree two separable
field extension of $\FF$.
\item A quaternion algebra over $\FF$.
\item A Cayley (or octonion) algebra over $\FF$.
\end{itemize}
The quadratic form $\Qup$ is the unique quadratic form satisfying $\Qup(1)=1$ and $x^2-\Bup(x,1)x+\Qup(x)1=0$
for all $x$.
\end{theorem}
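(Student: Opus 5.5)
The plan is to reduce the statement to the classical classification by first showing that Definition \ref{df:compoSTC}, specialized to $\vect$, is exactly the usual notion of a unital composition algebra with nondegenerate polar form. Then I would invoke the classical Hurwitz-type theorem in the form of \cite[Chapter 2, Theorem 1]{ZSSS}.

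\textbf{Step 1: translate the multiplicative property.} In $\vect$, $\Gamma^2(U)$ is the space of symmetric tensors and $\Qup(u)\bydef\Qup(u\otimes u)$. By Theorem \ref{th:mutilde}, $\tilde\mu$ is the restriction of $\hat\mu$, and
\[
\hat\mu\bigl((u\otimes u)\otimes(v\otimes v)\bigr)=uv\otimes uv .
\]
So evaluating \eqref{eq:multiplicative} on $(u\otimes u)\otimes(v\otimes v)$ gives $\Qup(uv)=\Qup(u)\Qup(v)$.

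For the converse I need that the elements $(u\otimes u)\otimes(v\otimes v)$ span $\Gamma^2(U)\otimes\Gamma^2(U)$. I expect this to be the main obstacle. In characteristic $2$, or over small fields such as $\FF_2$, the elements $u\otimes u$ do not span $\Gamma^2(U)$: the symmetric tensors also contain the elements $u\otimes v+v\otimes u$. To handle this, I would use a basis $\{e_i\}$ of $U$. Then $\Gamma^2(U)$ is spanned by the $e_i\otimes e_i$ and the $e_i\otimes e_j+e_j\otimes e_i=\rho_U(e_i\otimes e_j)$ for $i<j$. On the first kind of element the identity is the multiplicativity itself. On the second kind it reduces to the linearized identities of the Remark after Proposition \ref{pr:linearizations}, namely $\Qup(x)\Bup(y,z)=\Bup(xy,xz)$, its left version, and the full linearization. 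Each of these follows from $\Qup(uv)=\Qup(u)\Qup(v)$ by polarization over any field: expand $\Qup(x(y+z))$ and $\Qup((x+t)(y+z))$. Hence \eqref{eq:multiplicative} holds if and only if $\Qup(xy)=\Qup(x)\Qup(y)$ for all $x,y$.

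\textbf{Step 2: nondegeneracy and unit.} The computation preceding the theorem shows that $\Bup$ is the polar form $\Bup(u,v)=\Qup(u+v)-\Qup(u)-\Qup(v)$. So nondegeneracy in Definition \ref{df:QB} means that the polar form is nondegenerate, which is exactly the hypothesis in \cite[Chapter 2]{ZSSS}. The unit morphism $1_U\colon\FF\to U$ is the same as a two-sided unit element. Thus unital composition algebras in $\vect$ are precisely the composition algebras with unit of \cite{ZSSS} whose norm has nondegenerate polar form.

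\textbf{Step 3: classification and uniqueness of $\Qup$.} Applying \cite[Chapter 2, Theorem 1]{ZSSS} yields the listed algebras: the ground field, two-dimensional étale algebras, quaternion algebras and Cayley algebras. The field $\FF$ itself is excluded in characteristic $2$, because there $\Bup(1,1)=2\Qup(1)=0$, so the polar form is degenerate. The Cayley–Hamilton equation \eqref{eq:compo} and $\Qup(1)=1$ are standard consequences of the linearized identities (see \cite{Eld00}). For uniqueness, suppose $\Qup'$ also satisfies $\Qup'(1)=1$ and $x^2-\Bup'(x,1)x+\Qup'(x)1=0$. Subtracting the two equations gives $(\Bup(x,1)-\Bup'(x,1))x=(\Qup(x)-\Qup'(x))1$. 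For $x\notin\FF1$ both coefficients vanish, so $\Qup(x)=\Qup'(x)$. For $x=\lambda1$ we have $\Qup(\lambda1)=\lambda^2=\Qup'(\lambda1)$ directly. Hence $\Qup=\Qup'$.
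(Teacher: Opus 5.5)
Your proposal is correct and follows the paper's route: you identify Definition \ref{df:compoSTC} in $\vect$ with the usual notion of a unital composition algebra whose norm has nondegenerate polar form, as in \cite[Chapter 2]{ZSSS}, and then quote \cite[Chapter 2, Theorem 1]{ZSSS}. One correction: the ``main obstacle'' in Step 1 is not real, because the identity $u\otimes v+v\otimes u=(u+v)\otimes(u+v)-u\otimes u-v\otimes v$ shows that the elements $u\otimes u$ span $\Gamma^2(U)$ over any field, including in characteristic $2$ and over $\FF_2$ (this is why the paper can say $\Qup$ is determined by the values $\Qup(u\otimes u)$), so your basis-and-polarization argument is valid but unnecessary.
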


\subsection{Unital composition algebras in \texorpdfstring{$\sVec$}{sVec}}\null\quad

The objects of the symmetric tensor category $\sVec$ of vector superspaces over a field $\FF$ are the
$\ZZ_2$-graded vector spaces $U=U\subo\oplus U\subuno$. The braiding is given by 
$u\otimes v\mapsto (-1)^{\lvert u\rvert\lvert v\rvert}v\otimes u$, where $\lvert u\rvert$ is $0$ if $u$ is
an even element ($u\in U\subo$),
and $1$ if $u$ is odd ($u\in U\subuno$). 

Assume first that the characteristic of $\FF$ is not $2$. Then, given an object $U$ in $\sVec$, $\Gamma^2(U)$
is the subspace spanned by the elements
$u\otimes v+v\otimes u$, where at least one of $u$ and $v$ is even, and the elements $u\otimes v-v\otimes u$, 
where both $u$ and $v$ are odd. A morphism $\Qup\colon\Gamma^2(U)\rightarrow \Buno=\FF$ ($\FF$ is
endowed with the trivial grading $\FF=\FF\subo$) is determined by the values $\Qup(u\otimes u)$, for $u\in U\subo$,
and the values $\Bup(u\otimes v)=\Qup(u\otimes v-v\otimes u)$ for $u,v\in U\subuno$.

Writing $\Qup(u)\bydef \Qup(u\otimes u)$ for $u\in U\subo$, and $\Bup(u,v)=\Bup(u\otimes v)$ for
arbitrary $u,v\in U$, $\Qup$ gives a usual quadratic form on $U\subo$, while $\Bup$ gives a supersymmetric
bilinear form on $U$: $\Bup$ is symmetric on $U\subo$, alternating on $U\subuno$, and 
$\Bup(U\subo,U\subuno)=0=\Bup(U\subuno,U\subo)$. Proposition \ref{pr:full_multiplicative} shows that, in order
to check that the quadratic form $\Qup$ is multiplicative, it is enough to use the full linearization.

As a consequence,the unital composition algebras in $\sVec$ according to Definition \ref{df:compoSTC} coincide
with the unital composition superalgebras in \cite[Definition 2.2]{EOsuper}.

\smallskip

However, if the characteristic of the ground field $\FF$ is $2$, then $\sVec$ is just the symmetric tensor category
of the finite-dimensional $\ZZ_2$-graded vector spaces, with the usual braiding: $u\otimes v\mapsto v\otimes u$,
and hence the quadratic forms correspond to usual quadratic forms whose polar forms satisfy 
$\Bup(U\subo,U\subuno)=0$, and the
unital composition algebras in $\sVec$ are just the $\ZZ_2$-graded unital composition algebras
in $\vect$. Since the definition of a quadratic form on $\sVec$ is more restrictive than the definition of
quadratic superform in \cite[Definition 2.1]{EOsuper}, it turns out that the definition of unital composition
algebra in $\sVec$ does not agree in this case with the definition of unital composition superalgebra
in \cite[Definition 2.2]{EOsuper}, which is less restrictive.

But the classification of the unital composition superalgebras in \cite[Theorem 3.1]{EOsuper} implies
that any unital composition superalgebra is a unital composition algebra in $\sVec$. Hence the above cited
result gives the following classification (for details, the reader should consult \cite{EOsuper}):

\begin{theorem}
Any unital composition algebra in the symmetric tensor category $\sVec$ over a field $\FF$ is,
up to isomorphism, one of the following:
\begin{itemize}
\item A unital composition algebra in $\vect$, seen as a unital composition algebra in $\sVec$ with trivial odd part.
\item One of the superalgebras $B(1,2)$ or $B(4,2)$ in characteristic $3$.
\item A quaternion  algebra $\cQ$ over $\FF$, obtained from the Cayley-Dickson doubling process
from an \'etale subalgebra $\cK$ of dimension $2$, with $\cQ\subo=\cK$ and $\cQ\subuno=\cK^\perp$, in characteristic $2$.
\item or a Cayley algebra $\cC$ over $\FF$, obtained from the Cayely-Dickson doubling process from a 
quaternion  subalgebra $\cQ$, with $\cC\subo=\cQ$ and $\cC\subuno=\cQ^\perp$, in characteristic $2$.
\end{itemize}
\end{theorem}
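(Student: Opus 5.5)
The plan is to reduce the statement to the classification of unital composition superalgebras in \cite[Theorem 3.1]{EOsuper}, handling the two characteristic regimes separately, as in the discussion preceding the theorem.

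\emph{Characteristic $\neq 2$.} Here $\Gamma^2(U)$ is the graded-symmetric part of $U\otimes U$, and a quadratic form $\Qup$ is the same as a pair consisting of a quadratic form on $U\subo$ and a supersymmetric bilinear form $\Bup$ on $U$ with $\Bup(U\subo,U\subuno)=0$. By Proposition \ref{pr:full_multiplicative}, multiplicativity of $\Qup$ is equivalent to the full linearization of Proposition \ref{pr:linearizations}. Evaluating that identity on homogeneous elements gives the super identity $\Bup(xz,yt)+(-1)^{\lvert z\rvert\lvert t\rvert}\Bup(xt,yz)=\Bup(x,y)\Bup(z,t)$, up to the precise sign bookkeeping imposed by $c$. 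Together with nondegeneracy, this is exactly \cite[Definition 2.2]{EOsuper}. The classification of \cite{EOsuper} then gives two cases. If the odd part is zero, we get an algebra in $\vect$, which is covered by Theorem \ref{th:composition_vec}. Otherwise the odd part is nonzero, which forces characteristic $3$ and the superalgebras $B(1,2)$ and $B(4,2)$.

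\emph{Characteristic $2$.} The braiding is the plain flip, so a unital composition algebra in $\sVec$ is a $\ZZ_2$-graded Hurwitz algebra in $\vect$ whose polar form satisfies $\Bup(U\subo,U\subuno)=0$. The first step is to check that such an object is a unital composition superalgebra in the sense of \cite[Definition 2.1, 2.2]{EOsuper}. Since the quadratic forms in $\sVec$ are more restrictive than quadratic superforms, this inclusion is the easy direction. Applying \cite[Theorem 3.1]{EOsuper}, the possibilities are the trivially graded ones together with gradings of a quaternion or Cayley algebra, with even part a Hurwitz subalgebra of half dimension and odd part its orthogonal complement. Conversely, each of the listed gradings does give an object of $\sVec$. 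Indeed, the Cayley--Dickson decomposition $\cQ=\cK\oplus\cK^\perp$ (respectively $\cC=\cQ\oplus\cQ^\perp$) is a $\ZZ_2$-grading of the algebra. The norm is the usual norm, and it makes the two pieces orthogonal, so $\Qup$ is a morphism in $\sVec$. Hence every case listed in \cite{EOsuper} actually occurs.

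\emph{Main obstacle.} The delicate step is the characteristic $2$ comparison. One must make sure that no graded algebra in the list of \cite{EOsuper} has a quadratic superform that fails to be an honest quadratic form orthogonal across degrees. Equivalently, one needs the remark that the classification forces the superform to come from the ordinary norm. I would check this directly using the Cayley--Hamilton equation \eqref{eq:compo} on homogeneous elements. For an odd element $x$ one has $\Bup(x,1)=0$, so $x^2=\Qup(x)1$ lies in the even part, consistent with the grading. This shows that the norm restricted to the odd part is the ordinary norm.
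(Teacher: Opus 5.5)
Your proposal is correct and follows essentially the paper's route. In characteristic $\neq 2$ you identify unital composition algebras in $\sVec$ with the composition superalgebras of \cite{EOsuper} via Proposition~\ref{pr:full_multiplicative}; in characteristic $2$ you use that the $\sVec$ notion is more restrictive and check that every superalgebra in the list of \cite[Theorem 3.1]{EOsuper} is an honest $\ZZ_2$-graded Hurwitz algebra with $\Bup(U\subo,U\subuno)=0$, so the two classes coincide. Your closing ``main obstacle'' paragraph is redundant: the converse check just before it already settles the point, since the norm of a Hurwitz algebra is determined by its multiplication. Its Cayley--Hamilton argument on odd elements also presupposes the norm on $U\subuno$ it is meant to establish, so it is better dropped.
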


The superalgebras $B(1,2)$ and $B(2,4)$, specific of characteristic $3$, have been obtained in
\cite{DES} through a process of semisimplification from the split Cayley algebra, looking at this latter algebra as
an algebra in the category $\Repe$ of representations of the cyclic group of order $3$, whose semisimplification
is the Verlinde category $\Ver_3$, which is equivalent to $\sVec$.

%-------------------------
\bigskip

\section{Unital composition algebras in \texorpdfstring{$\Ver_4^+$}{Ver4+}}\label{se:ver4}

The goal of this last section is the classification of the unital composition algebras in the symmetric
tensor category $\Ver_4^+$. After deriving some consequences of the composition property in this
category, the classification will be split in two cases, depending on whether the Frobenius twist
is trivial or not. 

\medskip

Let $(U,\mu,\Qup,1_U)$ be a unital composition algebra in the symmetric tensor category $\Ver_4^+$
over a field $\FF$ of characteristic $2$, with multiplication $\mu$
denoted by juxtaposition: $\mu(u\otimes v)=uv$ for all $u,v\in U$,  with associated quadratic form 
$\Qup\colon\Gamma^2(U)\rightarrow\FF$, and with unity $1_U\colon \Buno=\FF\rightarrow U$ . Denote by 
$1\in U$ the image of the unity of $\Buno=\FF$ under $1_U$, so that $1$ is the identity element in the usual sense:
$1u=u=u1$ for any $u\in U$. Therefore, we will write $(U,\mu,\Qup,1)$, where $1$ is understood to be
the identity element. Let $\Bup\colon U\otimes U\rightarrow \FF$ be the associated bilinear form:
\[
\Bup(u\otimes v)\bydef \Qup\bigl(\rho_U(u\otimes v)\bigr)
=\Qup\bigl(u\otimes v+v\otimes u+t(v)\otimes t(u)\bigr)
\]
for all $u,v\in U$. As before (Equation \eqref{eq:rhoU}), $\rho_U\colon U\otimes U\rightarrow \Gamma^2(U)$ 
is the morphism such that
$\iota_U\circ \rho_U=\id +c_{U,U}$, where $\iota_U\colon\Gamma^2(U)\rightarrow U\otimes U$ is the natural
monomorphism. In $\Ver_4^+$ we identify $\Gamma^2(U)$ with a vector subspace of $U\otimes U$, so
that $\iota_U$ is the inclusion.

A word of caution is needed. Any algebra in $\Ver_4^+$ is, in particular, an algebra in $\vect$. A subalgebra 
(respectively, ideal) in $\Ver_4^+$ is a subalgebra (resp., ideal) in $\vect$ invariant under the action of $t$.
Unless otherwise indicated, the words subalgebra and ideal will refer to the category $\Ver_4^+$, although this
will create no confusion, as the ones we are going to consider are trivially invariant under the action of $t$.

For simplicity, we will write
\[
\Bup(u,v)\bydef \Bup(u\otimes v),\quad \Qup(x)\bydef \Qup(x\otimes x)
\]
for all $u,v\in U$ and $x\in \ker t$. Note that the restriction of $\Qup$ to $\ker t$ is a usual quadratic form,
i.e., a quadratic form in $\vect$. Also, $\Bup$ is a usual bilinear form too, and as such it is symmetric too
(\cite[Lemma 3.3]{Kannanetal}), because, for any $u,v\in U$,
\begin{multline*}
\Bup(u,v)=\Bup(u\otimes v)=\Bup\circ c_{U,U}(u\otimes v)=\Bup\bigl(v\otimes u+t(v)\otimes t(u)\bigr)\\
 =\Bup(v\otimes u)+\Bup\bigl(v\otimes t(t(u))\bigr)=\Bup(v\otimes u)+0=\Bup(v,u),
\end{multline*}
due to the fact that, $\Bup\colon U\otimes U\rightarrow \Buno=\FF$ being a morphism in $\Ver_4^+$ forces $0=\Bup\bigl(t(v\otimes u)\bigr)=\Bup\bigl(t(v)\otimes u+v\otimes t(u)\bigr)$. In other words, $\Bup$
satisfies
\begin{equation}\label{eq:Btinvariant}
\Bup\bigl(t(v),u\bigr)=\Bup\bigl(v,t(u)\bigr)
\end{equation}
for all $u,v\in U$. The nondegeneracy of the symmetric bilinear form 
$\Bup$ in $\Ver_4^+$ is equivalent to its nondegeneracy in $\vect$.

As in Theorem \ref{th:mutilde}, we will denote by $\tilde\mu$ the multiplication induced by $\mu$ on
$\Gamma^2(U)$.  

\begin{proposition}\label{pr:t1kertimt}
Let $(U,\mu,1)$ be a unital algebra in $\Ver_4^+$. Then the following assertions hold:
\begin{itemize}
 \item $t(1)=0$.
 \item $\im t$ is an ideal of the subalgebra $\ker t$.
\end{itemize}
\end{proposition}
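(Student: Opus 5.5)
The whole argument rests on one observation. Since $\mu\colon U\otimes U\rightarrow U$ is a morphism in $\Ver_4^+$, it commutes with the action of $t$. Because $t$ is primitive, it acts on $U\otimes U$ as $t\otimes\id+\id\otimes t$. Hence
\[
t(uv)=t(u)v+ut(v)\quad\text{for all } u,v\in U,
\]
so $t$ is a derivation of the underlying algebra in $\vect$. Likewise $1_U\colon\Buno\rightarrow U$ is a morphism, and $t$ acts trivially on $\Buno=\FF$ (counit of a primitive element). Both parts will be read off from these two facts.

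For the first assertion, $t(1)=t(1_U(1_\FF))=1_U(t\cdot 1_\FF)=0$, since $t$ acts by zero on the unit object. An alternative, purely algebraic check: the derivation rule gives $t(1)=t(1\cdot 1)=t(1)1+1t(1)=2t(1)=0$ in characteristic $2$. Either argument suffices, and I would give the first and mention the second.

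For the second assertion, I first check that $\ker t$ is a subalgebra. It contains $1$ by the first part, and for $x,y\in\ker t$ the Leibniz rule gives $t(xy)=t(x)y+xt(y)=0$. It is also trivially $t$-invariant. Next, $t^2=0$ gives $\im t\subseteq\ker t$. Finally, $\im t$ is an ideal of $\ker t$: for $x\in\ker t$ and $u\in U$,
\[
t(ux)=t(u)x+ut(x)=t(u)x,\qquad t(xu)=xt(u)+t(x)u=xt(u),
\]
so $t(u)x$ and $xt(u)$ both lie in $\im t$. This shows $(\im t)(\ker t)\subseteq\im t$ and $(\ker t)(\im t)\subseteq\im t$. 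The ideal $\im t$ is $t$-invariant since $t(\im t)=0$.

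There is no real obstacle here. The only point needing care is the identification of the action of $t$ on $U\otimes U$ and on $\Buno$, which follows from the Hopf structure of $\FF[t]$ (primitive $t$, counit $\varepsilon(t)=0$) recalled in the introduction. The braiding plays no role in either assertion.
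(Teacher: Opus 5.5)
Your proof is correct and matches the paper's: both rest on $t$ acting as a derivation because $\mu$ is a morphism in $\Ver_4^+$. From this, $t(1)=t(1\cdot 1)=2t(1)=0$, and the Leibniz rule gives $t(u)x=t(ux)$ and $xt(u)=t(xu)$ for $x\in\ker t$. Your other argument for the first assertion, that $1_U$ is a morphism out of $\Buno$, on which $t$ acts by zero, is also valid and spells out the same idea in categorical terms.
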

\begin{proof}
$(U,\mu)$ being an algebra in $\Ver_4^+$, $t$ acts as a derivation and hence $t(1)=t(1^2)=t(1)+t(1)=0$.
Also, it is clear that $\ker t$ is a subalgebra and for any $x\in\ker t$ and $y=t(u)\in\im t$, 
$xy=xt(u)=t(xu)\in\im t$ and, similarly $yx\in \im t$.
\end{proof}

\smallskip

\subsection{Consequences of the composition property}
\ \null

The subspace $\Gamma^2(U)$ of $U\otimes U$ is spanned by the elements $x\otimes x$, for $x\in \ker t$,
and $\rho_U(u\otimes v)=u\otimes v+v\otimes u+t(v)\otimes t(u)$, for $u,v\in U$.

Therefore, the composition property $Q\otimes Q=Q\circ\tilde\mu$ splits into four instances:
\begin{enumerate}
\item 
For $x,y\in\ker t$,
\[
(\Qup\otimes \Qup)\bigl((x\otimes x)\otimes (y\otimes y)\bigr)
     =\Qup\circ\tilde\mu\bigl((x\otimes x)\otimes (y\otimes y)\bigr),
\]
that is, 
\begin{equation}\label{eq:Qxy}
\Qup(xy)=\Qup(x)\Qup(y),
\end{equation}
for all $x,y\in\ker t$.

\item For $x\in \ker t$ and $u,v\in U$,
\[
(\Qup\otimes\Qup)\bigl((x\otimes x)\otimes\rho_U(u\otimes v)\bigr)
 =\Qup\circ\tilde\mu\bigl((x\otimes x)\otimes\rho_U(u\otimes v)\bigr).
\]
The left hand side equals $(\Qup\otimes\Bup)\bigl((x\otimes x)\otimes (u\otimes v)\bigr)=\Qup(x)\Bup(u,v)$,
while the right hand side equals (see the proof of the right partial linearization in Proposition \ref{pr:linearizations}) 
\[
\Bup\circ\hat\mu\bigl((x\otimes x)\otimes (u\otimes v)\bigr)=\Bup(xu\otimes xv)=\Bup(xu,xv).
\]
Hence we get
\begin{equation}\label{eq:QBxuv}
\Bup(xu,xv)=\Qup(x)\Bup(u,v),
\end{equation}
for all $x\in\ker t$ and $u,v\in U$.

\item In the same vein, for $x\in\ker t$ and $u,v\in U$,
\[
(\Qup\otimes\Qup)\bigl(\rho_U(u\otimes v)\otimes (x\otimes x)\bigr)
 =\Qup\circ\tilde\mu\bigl(\rho_U(u\otimes v)\otimes (x\otimes x)\bigr).
\]
which is equivalent to
\begin{equation}\label{eq:BQuvx}
\Bup(ux,vx)=\Qup(x)\Bup(u,v),
\end{equation}
for all $x\in\ker t$ and $u,v\in U$.

\item For $x,y,z,w\in U$,
\[
(\Qup\otimes\Qup)\bigl(\rho_U(x\otimes y)\otimes \rho_U(z\otimes w)\bigr)
 =\Qup\circ\tilde\mu\bigl(\rho_U(x\otimes y)\otimes \rho_U(z\otimes w)\bigr).
\]
The left hand side equals $(\Bup\otimes\Bup)\bigl((x\otimes y)\otimes (z\otimes w)\bigr)=\Bup(x,y)\Bup(z,w)$.
The right hand side equals (see the proof of the full linearization in Proposition \ref{pr:linearizations})
\[
(\Bup\circ\hat\mu)\circ\bigl((\id+c_{U,U})\otimes \id\bigr)\bigl((x\otimes y)\otimes (z\otimes w)\bigr).
\]
Straightforward computations give
\[
\begin{split}
(x&\otimes y)\otimes (z\otimes w)\\
 &\xrightarrow{(\id+c_{U,U})\otimes \id}x\otimes y\otimes z\otimes w+
                                              y\otimes x \otimes z\otimes w+ t(y)\otimes t(x)\otimes z\otimes w\\
 &\xrightarrow{\null\ c_{23}\ \null} x\otimes z\otimes y\otimes w + x\otimes t(z)\otimes t(y)\otimes w +
        y\otimes z\otimes x\otimes w\\ 
 &\qquad\qquad + y\otimes t(z)\otimes t(x)\otimes w + t(y)\otimes z\otimes t(x)\otimes w\\
 &\xrightarrow{\,\mu\otimes\mu\,} xz\otimes yw +xt(z)\otimes t(y)w + yz\otimes xw +yt(z)\otimes t(x)w
        +t(y)z\otimes t(x)w\\
&\xrightarrow{\null\ \ \Bup\ \ \null} \Bup(xz,yw)+\Bup\bigl(xt(z),t(y)w\bigr)+\Bup(yz,xw)\\
 &\hspace*{150pt}+\Bup\bigl(yt(z),t(x)w\bigr)
    +\Bup\bigl(t(y)z,t(x)w\bigr).
\end{split}
\]
But we have
\begin{multline*}
\Bup\bigl(yt(z),t(x)w\bigr)+\Bup\bigl(t(y)z,t(x)w\bigr)=\Bup\bigl(t(yz),t(x)w\bigr)\\
=\Bup\bigl(yz,t(t(x)w)\bigr)
 =\Bup\bigl(yz,t(x)t(w)\bigr),
\end{multline*}
because $\Bup$, being a morphism in $\Ver_4^+$, satisfies \eqref{eq:Btinvariant}.

Therefore, our condition here is equivalent to the condition
\begin{equation}\label{eq:BBxyzw}
\begin{split}
\Bup(x,y)\Bup(z,w)
 &=\Bup(xz,yw)+\Bup(xw,yz)\\
 &\qquad\qquad +\Bup\bigl(xt(z),t(y)w\bigr)+\Bup\bigl(yz,t(x)t(w)\bigr),
\end{split}
\end{equation}
for all $x,y,z,w\in U$.
\end{enumerate}

\begin{remark}\label{re:kert_perp_imt}
Equation \eqref{eq:Btinvariant} implies, in particular,
that $\im t$ and $\ker t$ are orthogonal relative to $\Bup$:
\[
\Bup(\ker t,\im t)=0.
\]
In particular, $\im t$ is a totally isotropic subspace of $\Bup$.
\end{remark}

\begin{remark}\label{re:symmetryxy_zw}
Equation \eqref{eq:BBxyzw} is symmetric in $x$ and $y$, because so is its left hand side, and also in $z$ and $w$. 

The symmetry in $x$ and $y$ of the right hand side can also be deduced as follows:
\[
\begin{split}
\Bup\bigl(xt(z),&t(y)w\bigr)+\Bup\bigl(yz,t(x)t(w)\bigr)\\
 &=\Bup\bigl(t(xz),t(y)w\bigr)+\Bup\bigl(t(x)z,t(y)w\bigr)+\Bup\bigl(t(yz),t(x)w\bigr)\\
 &=\Bup\bigl(xz,t(y)t(w)\bigr)+\Bup\bigl(yt(z),t(x)w\bigr)+\Bup\bigl(t(y)z,t(x)w\bigr)+\Bup\bigl(t(x)z,t(y)w\bigr)
\end{split}
\]
and $\Bup\bigl(t(y)z,t(x)w\bigr)+\Bup\bigl(t(x)z,t(y)w\bigr)=\Bup\bigl(t(y),t(x)\bigr)\Bup(z,w)=0$, because of 
(the linearization of) \eqref{eq:QBxuv} and since $\im t$ is totally isotropic.

In the same vein, the symmetry in $z$ and $w$ follows because
\[
\begin{split}
\Bup\bigl(xt(z),&t(y)w\bigr)+\Bup\bigl(yz,t(x)t(w)\bigr)\\
 &=\Bup\bigl(xt(z),t(yw)\bigr)+\Bup\bigl(xt(z),yt(w)\bigr)+\Bup\bigl(t(yz),xt(w)\bigr)\\
 &=\Bup\bigl(t(x)t(z),yw\bigr)+\Bup\bigl(xt(z),yt(w)\bigr)+\Bup\bigl(yt(z),xt(w)\bigr)+\Bup\bigl(t(y)z,xt(w)\bigr)\\
 &=\Bup\bigl(yw,t(x)t(z)\bigr)+\Bup(x,y)\Bup\bigl(t(z),t(w)\bigr)+\Bup\bigl(xt(w),t(y)z\bigr)\\
 &=\Bup\bigl(yw,t(x)t(z)\bigr)+\Bup\bigl(xt(w),t(y)z\bigr),
\end{split}
\]
where we have used \eqref{eq:BQuvx} and $\Bup\bigl(t(z),t(w)\bigr)=0$.
\end{remark}

\bigskip

\begin{proposition}\label{pr:adjunction}
Let $(U,\mu,1)$ be a unital algebra in the symmetric tensor category $\Ver_4^+$
over a field $\FF$ of characteristic $2$. Let $\Qup$ be a multiplicative quadratic form (relative to $\mu$)
and let $\Bup$ be the associated symmetric bilinear form. For $x\in U$, write
$\overline{x}=x+\Bup(x,1)1$. Then, for all $x,y,z\in U$, the following equations hold:
\begin{equation}\label{eq:adjoint}
\begin{split}
\Bup(xy,z)&=\Bup\bigl(y,\overline{x}z+t(x)t(z)\bigr)\\ 
\Bup(xy,z)&=\Bup\bigl(x,z\overline{y}+t(z)t(y)\bigr).
\end{split}
\end{equation}
\end{proposition}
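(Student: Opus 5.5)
Both identities will come from specializing the full linearization \eqref{eq:BBxyzw} with the unity $1$ in one slot. We use Proposition \ref{pr:t1kertimt}: $t(1)=0$, so $1\in\ker t$ and every term involving $t(1)$ vanishes. Equation \eqref{eq:BBxyzw} reads
\[
\Bup(x,y)\Bup(z,w)=\Bup(xz,yw)+\Bup(xw,yz)+\Bup\bigl(xt(z),t(y)w\bigr)+\Bup\bigl(yz,t(x)t(w)\bigr).
\]

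For the first identity in \eqref{eq:adjoint}, we set $w=1$ and rename the variables so that the product $xy$ appears. Take the slots $(x,y,z,w)\to(x,z,y,1)$. The left side is $\Bup(x,z)\Bup(y,1)$. On the right, $\Bup(xy,z)$ appears, together with $\Bup(x,zy)$, then $\Bup\bigl(xt(y),t(z)\bigr)$, and $\Bup\bigl(zy,t(x)t(1)\bigr)=0$. This does not isolate $\Bup(xy,z)$ in the desired form, so the better choice is to put $1$ in the second slot.

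Take $(x,y,z,w)\to(x,1,y,z)$. Then $t(y)$ in the third term becomes $t(1)=0$, so that term vanishes. We obtain
\[
\Bup(x,1)\Bup(y,z)=\Bup(xy,z)+\Bup(xz,y)+\Bup\bigl(y,t(x)t(z)\bigr).
\]
Since $\Bup$ is symmetric, this gives $\Bup(xy,z)=\Bup\bigl(y,\Bup(x,1)z+xz+t(x)t(z)\bigr)=\Bup\bigl(y,\overline{x}z+t(x)t(z)\bigr)$, because characteristic $2$ makes the signs irrelevant. This is exactly the first identity.

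For the second identity, we use the symmetry in $z,w$ from Remark \ref{re:symmetryxy_zw}, or equivalently \eqref{eq:BBxyzw} with $1$ in the fourth slot. Take $(x,y,z,w)\to(x,z,y,1)$: the last term vanishes since $t(1)=0$, and we get
\[
\Bup(x,z)\Bup(y,1)=\Bup(xy,z)+\Bup(x,zy)+\Bup\bigl(xt(y),t(z)\bigr).
\]
Using \eqref{eq:Btinvariant} twice, together with the fact that $t$ acts as a derivation with $t^2=0$ (so $t(xt(y))=t(x)t(y)$), we would like to move $t$ across. One needs $\Bup\bigl(xt(y),t(z)\bigr)=\Bup\bigl(x,t(z)t(y)\bigr)$, and this is the main obstacle. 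I would derive it by applying the first identity to the product $x\cdot t(y)$ in the variant with roles swapped, or more directly by specializing \eqref{eq:BBxyzw} as $(x,y,z,w)\to(x,t(z),t(y),1)$. All $t$-terms then vanish since $t^2=0$, giving $\Bup(x,t(z))\Bup(t(y),1)=\Bup(xt(y),t(z))+\Bup(x,t(z)t(y))$. The left side is zero because $1\in\ker t$ is orthogonal to $\im t$ by Remark \ref{re:kert_perp_imt}. With that in hand, rearranging gives $\Bup(xy,z)=\Bup\bigl(x,z\overline{y}+t(z)t(y)\bigr)$.
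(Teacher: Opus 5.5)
Your proof is correct and is essentially the paper's: both identities come from specializing \eqref{eq:BBxyzw} with $1$ in the $w$-slot and in the $y$-slot, then using $t(1)=0$ and \eqref{eq:Btinvariant}. The only difference is in the second identity, where you isolate $\Bup(xy,z)$ and so need the extra specialization $(x,t(z),t(y),1)$ to get $\Bup\bigl(xt(y),t(z)\bigr)=\Bup\bigl(x,t(z)t(y)\bigr)$; the paper instead isolates the other product term $\Bup(x,yz)$, which only requires $\Bup\bigl(xt(z),t(y)\bigr)=\Bup\bigl(t(x)t(z),y\bigr)$ (from \eqref{eq:Btinvariant} and $t^2=0$) followed by a renaming of variables.
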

\begin{proof}
Put $w=1$ in \eqref{eq:BBxyzw}, and use $t(1)=0$ to get
\[
\Bup(x,y)\Bup(z,1)=\Bup(xz,y)+\Bup(x,yz)+\Bup\bigl(xt(z),t(y)\bigr).
\]
Note that $\Bup\bigl(xt(z),t(y)\bigr)=\Bup\bigl(t(x)t(z),y\bigr)$ because of \eqref{eq:Btinvariant}, and hence
the last equation can be written as
\[
\Bup(x,yz)=\Bup(x\overline{z}+t(x)t(z),y\bigr),
\]
for all $x,y,z\in U$, which is equivalent to the second equation to be proved. 

In the same vein, put $y=1$ in \eqref{eq:BBxyzw} to get
\[
\Bup(x,1)\Bup(z,w)=\Bup(xz,w)+\Bup(xw,z)+\Bup\bigl(z,t(x)t(w)\bigr),
\]
which can be written as
\[
\Bup(xz,w)=\Bup\bigl(z,\overline{x}w+t(x)t(w)\bigr),
\]
for all $x,z,w\in U$, and this is equivalent to the first equation to be proved.
\end{proof}

\smallskip

This result allows us to study the subalgebra $\ker t$ of any unital composition algebra in $\Ver_4^+$.

\begin{theorem}\label{th:kertconic}
Let $(U,\mu,\Qup,1)$ be a unital composition algebra in the symmetric tensor category $\Ver_4^+$
over a field $\FF$ of characteristic $2$. Let $\Bup$ be the associated symmetric bilinear form.
Then the subalgebra $\ker t$ is a conic alternative algebra relative to the restriction of $\Qup$ to $\ker t$.
Moreover, $\im t$ is a commutative ideal of $\ker t$ and concides with the radical of the restriction of the
bilinear form $\Bup$ to $\ker t$.
\end{theorem}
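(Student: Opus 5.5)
The plan is to extract everything from the adjunction identities of Proposition~\ref{pr:adjunction}, restricted to $\ker t$, combined with the nondegeneracy of $\Bup$ on all of $U$.

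\emph{Preliminaries.} First I check that on $\ker t$ the bilinear form $\Bup$ is the polar form of the usual quadratic form $x\mapsto\Qup(x)$. For $x,y\in\ker t$,
\[
\rho_U(x\otimes y)=x\otimes y+y\otimes x=(x+y)\otimes(x+y)+x\otimes x+y\otimes y,
\]
so $\Bup(x,y)=\Qup(x+y)+\Qup(x)+\Qup(y)$. Next, $\Qup(1)=1$. Indeed, $1\in\ker t$ by Proposition~\ref{pr:t1kertimt}, and \eqref{eq:QBxuv} with $x=1$ gives $\Bup(u,v)=\Qup(1)\Bup(u,v)$ for all $u,v$. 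Nondegeneracy of $\Bup$ (which may be checked in $\vect$) then forces $\Qup(1)=1$.

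\emph{Conic property.} Take $x\in\ker t$, so $t(x)=0$. The first equation in \eqref{eq:adjoint} becomes $\Bup(xy,z)=\Bup(y,\overline{x}z)$, and the second becomes $\Bup(yx,z)=\Bup(y,z\overline{x})$. Combining the first with \eqref{eq:QBxuv} gives
\[
\Bup\bigl(\overline{x}(xu),v\bigr)=\Bup(xu,xv)=\Qup(x)\Bup(u,v)
\]
for all $u,v\in U$. Nondegeneracy of $\Bup$ on $U$ then yields $\overline{x}(xu)=\Qup(x)u$ for every $u\in U$. Symmetrically, using \eqref{eq:BQuvx}, I get $(ux)\overline{x}=\Qup(x)u$. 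Setting $u=1$ gives $\overline{x}x=\Qup(x)1$. Since the characteristic is $2$, this reads $x^2-\Bup(x,1)x+\Qup(x)1=0$, so $\ker t$ is conic.

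\emph{Alternativity.} Expanding $\overline{x}=x+\Bup(x,1)1$ in $\overline{x}(xu)=\Qup(x)u$ gives
\[
x(xu)=\Bup(x,1)xu+\Qup(x)u=x^2u .
\]
Similarly $(ux)x=ux^2$. Hence $\ker t$, and in fact each $x\in\ker t$ acting on all of $U$, satisfies the left and right alternative laws.

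\emph{Structure of $\im t$.} By Proposition~\ref{pr:t1kertimt}, $\im t$ is an ideal of $\ker t$. For commutativity, linearize the conic equation on $\ker t$ using the polar-form identification above:
\[
ab+ba=\Bup(a,1)b+\Bup(b,1)a+\Bup(a,b)1 .
\]
Now take $a,b\in\im t$. By Remark~\ref{re:kert_perp_imt}, since $1\in\ker t$, all three scalars $\Bup(a,1)$, $\Bup(b,1)$ and $\Bup(a,b)$ vanish. Hence $ab=ba$.

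\emph{Radical.} The same Remark gives $\im t\subseteq\operatorname{rad}\bigl(\Bup|_{\ker t}\bigr)$. For the converse, the radical is contained in $(\ker t)^{\perp}$, where $\perp$ is taken in $U$. Because $\Bup$ is nondegenerate on $U$,
\[
\dim(\ker t)^\perp=\dim U-\dim\ker t=\dim\im t .
\]
Together with $\im t\subseteq(\ker t)^\perp$, this forces $(\ker t)^\perp=\im t$, and therefore the radical equals $\im t$.

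The only delicate points I anticipate are these. One must remember that cancellation has to use nondegeneracy on all of $U$, not on $\ker t$, where $\Bup$ is degenerate. One must also verify that the polar form of $\Qup|_{\ker t}$ really is $\Bup$, which is needed for the linearized conic identity. Everything else reduces to short manipulations of \eqref{eq:adjoint}.
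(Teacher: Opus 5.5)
Your proposal is correct and follows essentially the paper's proof: you get the radical from $\Bup(\ker t,\im t)=0$ plus nondegeneracy (making the dimension count explicit), $\Qup(1)=1$ from \eqref{eq:QBxuv}, and $\overline{x}(xu)=\Qup(x)u=(ux)\overline{x}$ from \eqref{eq:adjoint} together with \eqref{eq:QBxuv}, \eqref{eq:BQuvx} and nondegeneracy on all of $U$; then $u=1$ gives the conic law and expanding $\overline{x}$ gives alternativity. The only cosmetic difference is that you linearize the conic identity on all of $\ker t$ before restricting to $\im t$, whereas the paper first notes $x^2=\Qup(x)1$ on $\im t$ and linearizes there.
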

\begin{proof}
As $\Bup$ is nondegenerate and $\Bup(\ker t,\im t)=0$ (Remark \ref{re:kert_perp_imt}), it follows that $\im t$
is the radical of the restriction of $\Bup$ to $\ker t$. Note also that \eqref{eq:QBxuv} gives $\Qup(1)=1$. Also,
as $t$ acts as a derivation, $\im t$ is an ideal of $\ker t$ (Proposition \ref{pr:t1kertimt}).

For any $x\in\ker t$ and $y,z\in U$, Equations \eqref{eq:adjoint} and \eqref{eq:QBxuv} give
\[
\Bup\bigl(\overline{x}(xy),z\bigr)=\Bup(xy,xz)=\Qup(x)\Bup(y,z)=\Bup\bigl(\Qup(x)y,z\bigr).
\]
A similar argument gives $\Bup\bigl((yx)\overline{x},z\bigr)=\Bup\bigl(\Qup(x)y,z\bigr)$.
The nondegeneracy of $\Bup$ then gives
\begin{equation}\label{eq:barxxy}
\overline{x}(xy)=\Qup(x)y=(yx)\overline{x}
\end{equation}
for all $x\in\ker t$ and $y\in U$.

With $y=1$ this gives $\overline{x}x=\Qup(x)1$, that is,
\begin{equation}\label{eq:conic}
x^2+\Bup(x,1)x+\Qup(x)1=0
\end{equation}
for all $x\in\ker t$, proving that $\ker t$ is a conic algebra relative to restriction of $\Qup$, whose polar
form is the restriction of $\Bup$ to $\ker t$. 

Besides,
\eqref{eq:barxxy} and \eqref{eq:conic} imply 
\begin{equation}\label{eq:xxy}
x(xy)=x^2y,\qquad (yx)x=yx^2
\end{equation}
for all $x\in \ker t$ and $y\in U$, and this shows that $\ker t$ is an alternative algebra.

Finally, \eqref{eq:conic} gives $x^2=\Qup(x)1$ for any $x\in \im t$, and hence $xy+yx=\Bup(x,y)1=0$ for 
all $x,y\in \im t$, as $\im t$ is a totally isotropic subspace relative to $\Bup$. Hence $xy=yx$ for all $x,y\in \im t$.
\end{proof}

\bigskip

We will need some further consequences of \eqref{eq:BBxyzw}.

\begin{proposition}\label{pr:adjunction2}
Let $(U,\mu,\Qup,1)$ be a unital composition algebra in the symmetric tensor category $\Ver_4^+$
over a field $\FF$ of characteristic $2$. Let $\Bup$ be the associated symmetric bilinear form. For $x\in U$, write
$\overline{x}=x+\Bup(x,1)1$. Then, for all $x,y,z\in U$, the following equations hold:
\begin{equation}\label{eq:adjoint2}
\begin{split}
(xy)\overline{z}+(xz)\overline{y}&=\Bup(y,z)x+\bigl(xt(y)\bigr)t(z),\\
\overline{x}(yz)+\overline{y}(xz)&=\Bup(x,y)z+t(x)\bigl(t(y)z\bigr).
\end{split}
\end{equation}
\end{proposition}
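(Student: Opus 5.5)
We prove both identities by pairing each side with an arbitrary $w\in U$ under $\Bup$, and then using the nondegeneracy of $\Bup$ (which holds in $\vect$, as noted above). The tools are the adjunction formulas \eqref{eq:adjoint} of Proposition~\ref{pr:adjunction}, the full linearization \eqref{eq:BBxyzw}, the $t$-invariance \eqref{eq:Btinvariant}, and $t(1)=0$.

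For the first identity, fix $x,y,z,w\in U$. By the second equation in \eqref{eq:adjoint}, applied to the product $(xy)\overline{z}$,
\[
\Bup\bigl((xy)\overline{z},w\bigr)=\Bup\bigl(xy,\,w\overline{\overline{z}}+t(w)t(\overline{z})\bigr)=\Bup\bigl(xy,wz+t(w)t(z)\bigr).
\]
Here we use $\overline{\overline{z}}=z$, which holds because $\Bup(1,1)=0$ by Corollary~\ref{co:Balternating}, together with $t(\overline z)=t(z)$, which holds because $t(1)=0$.

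Symmetrizing in $y\leftrightarrow z$ gives
\[
\Bup\bigl((xy)\overline{z}+(xz)\overline{y},w\bigr)=\Bup(xy,wz)+\Bup(xz,wy)+\Bup\bigl(xy,t(w)t(z)\bigr)+\Bup\bigl(xz,t(w)t(y)\bigr).
\]
Using symmetry of $\Bup$, the first two terms are exactly the right-hand side of \eqref{eq:BBxyzw} with the variables renamed so that the left-hand side becomes $\Bup(x,w)\Bup(y,z)$. Take \eqref{eq:BBxyzw} with $(x,y,z,w)\mapsto(x,w,y,z)$:
\[
\Bup(x,w)\Bup(y,z)=\Bup(xy,wz)+\Bup(xz,wy)+\Bup\bigl(xt(y),t(w)z\bigr)+\Bup\bigl(wy,t(x)t(z)\bigr).
\]
Substituting this, the claim reduces to an identity among four $t$-terms:
\[
\Bup\bigl(xy,t(w)t(z)\bigr)+\Bup\bigl(xz,t(w)t(y)\bigr)+\Bup\bigl(xt(y),t(w)z\bigr)+\Bup\bigl(wy,t(x)t(z)\bigr)=\Bup\bigl((xt(y))t(z),w\bigr).
\]

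This bookkeeping is the main obstacle. We handle it by repeatedly moving $t$ across $\Bup$ via \eqref{eq:Btinvariant}, using the derivation rule $t(ab)=t(a)b+at(b)$ and $t^2=0$. For example, $\Bup(xy,t(w)t(z))=\Bup(t(xy),w\,t(z))+\dots$, and we may then apply \eqref{eq:adjoint} again to move factors across. After this, terms containing two factors of $\im t$ paired together should vanish by the linearization of \eqref{eq:QBxuv}/\eqref{eq:BQuvx} combined with $\Bup(\im t,\im t)=0$, exactly as in Remark~\ref{re:symmetryxy_zw}. The remaining term should be converted by \eqref{eq:adjoint}:
\[
\Bup\bigl((xt(y))t(z),w\bigr)=\Bup\bigl(xt(y),\,w\,\overline{t(z)}+t(w)t(t(z))\bigr)=\Bup\bigl(xt(y),w\,t(z)\bigr),
\]
since $\Bup(t(z),1)=0$ by Remark~\ref{re:kert_perp_imt} and $t^2=0$. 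We then match this against the leftover terms after expanding $t(w\,t(z))=t(w)t(z)$.

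If this direct matching becomes messy, a fallback is to first treat the cases $y\in\ker t$ or $z\in\ker t$, where the $t$-terms simplify, and then use bilinearity together with a complement of $\ker t$ in $U$.

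The second identity is proved symmetrically. Pair $\overline{x}(yz)$ with $w$ and use the first equation in \eqref{eq:adjoint}, $\Bup(\overline{x}(yz),w)=\Bup(yz,xw+t(x)t(w))$. Then symmetrize in $x\leftrightarrow y$ and invoke \eqref{eq:BBxyzw} with the appropriate permutation of variables. The $t$-terms are handled in the mirror manner, using the symmetries established in Remark~\ref{re:symmetryxy_zw}.
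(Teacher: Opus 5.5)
Your reduction is correct. Pairing with $w$, applying the second equation of \eqref{eq:adjoint} with $\overline{\overline z}=z$ and $t(\overline z)=t(z)$, and substituting the instance $(x,y,z,w)\mapsto(x,w,y,z)$ of \eqref{eq:BBxyzw} does turn the first identity into your four-term $t$-identity. The gap is that this identity is the whole nontrivial content of the proof, and you only say what ``should'' happen.

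It does follow from the tools you list, but not by the manipulation you sketch. Suppose you move $t$ across $\Bup$ uniformly, writing $t(w)t(z)=t\bigl(w\,t(z)\bigr)$, $t(x)t(z)=t\bigl(x\,t(z)\bigr)$, and so on. Then the claim collapses to the vanishing of $\Bup\bigl(t(w)y,x\,t(z)\bigr)+\Bup\bigl(t(x)y,w\,t(z)\bigr)+\Bup\bigl(t(x)z,w\,t(y)\bigr)+\Bup\bigl(x\,t(y),t(w)z\bigr)$. In every term an element of $\im t$ multiplies on the left in one argument and on the right in the other. Neither the linearization of \eqref{eq:QBxuv} nor that of \eqref{eq:BQuvx} applies to such terms. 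Re-expanding them uniformly, either by the derivation rule or by \eqref{eq:BBxyzw} with two arguments in $\im t$, leads straight back to the expression you started from.

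Your fallback does not help either. The identity is bilinear in $(y,z)$, and the cases $y\in\ker t$ or $z\in\ker t$ are the easy ones. The case $t(y),t(z)\neq 0$, where all the $t$-terms live, is exactly what remains.

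What closes the gap is the $x\leftrightarrow y$ symmetry of the $t$-part of \eqref{eq:BBxyzw} from Remark \ref{re:symmetryxy_zw}. Write $x\,t(y)=t(xy)+t(x)y$ and $t(x)t(z)=t\bigl(t(x)z\bigr)$, so that the linearization of \eqref{eq:QBxuv} kills $\Bup\bigl(t(x)y,t(w)z\bigr)+\Bup\bigl(t(w)y,t(x)z\bigr)$. Equivalently, use the instance $(x,y,z,w)\mapsto(w,x,y,z)$ of \eqref{eq:BBxyzw}, which puts the pairing variable $w$ in the first slot. Then the two copies of $\Bup\bigl(xy,t(w)t(z)\bigr)$ cancel. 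Writing $\Bup\bigl(xz,t(w)t(y)\bigr)=\Bup\bigl(t(xz),w\,t(y)\bigr)$ and expanding $t(xz)=t(x)z+x\,t(z)$ reduces the claim to
\[
\Bup\bigl(x\,t(z),w\,t(y)\bigr)=\Bup\bigl(x\,t(y),w\,t(z)\bigr).
\]
This is the linearization of \eqref{eq:BQuvx} at $t(y),t(z)\in\ker t$, together with $\Bup(\im t,\im t)=0$.

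For the second identity your natural instance of \eqref{eq:BBxyzw} already works. One is left with $\Bup\bigl(t(x)z,t(y)w\bigr)=\Bup\bigl(t(y)z,t(x)w\bigr)$, which is the linearization of \eqref{eq:QBxuv}.

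The paper avoids this detour for the first identity by pairing from the start against the variable in the first slot of \eqref{eq:BBxyzw}. All five terms are rewritten as $\Bup(x,\cdot\,)$ via \eqref{eq:adjoint} and \eqref{eq:Btinvariant}. In the paper's labelling, the $t$-terms then collapse using only $t(yw)+t(y)w=y\,t(w)$ and the cancellation of two copies of $t(yz)t(w)$.
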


Note that the expression $\bigl(xt(y)\bigr)t(z)$ is symmetric in $y,z$, because $\overline{u}=u$ for any $u\in\im t$,
and hence the linearization of \eqref{eq:barxxy} gives, using that $\im t$ is totally isotropic, 
$\bigl(xt(y)\bigr)t(z)+\bigl(xt(z)\bigr)t(y)=0$. In the same vein, the expression $t(x)\bigl(t(y)z\bigr)$ is symmetric
in $x,y$.

\begin{proof}
Consider \eqref{eq:BBxyzw} and note that, because of \eqref{eq:adjoint}, one has:
\[
\begin{split}
\Bup(x,y)\Bup(z,w)&=\Bup\bigl(x,\Bup(z,w)y\bigr),\\
\Bup(xz,yw)&=\Bup\bigl(x,(yw)\overline{z}+t(yw)t(z)\bigr),\\
\Bup(xw,yz)&=\Bup\bigl(x,(yz)\overline{w}+t(yz)t(w)\bigr),\\
\Bup\bigl(xt(z),t(y)w\bigr)&=\Bup\bigl(x,(t(y)w)t(z)\bigr),\\
\Bup\bigl(yz,t(x)t(w)\bigr)&=\Bup\bigl(xt(w),t(yz)\bigr)=\Bup\bigl(x,t(yz)t(w)\bigr),
\end{split}
\]
where in the last equation we have used that $\overline{x}=x$ for any $x\in \im t$, because $\im t$ is orthogonal
to $1$ by Remark \ref{re:kert_perp_imt}.

The nondegeneracy of $\Bup$ gives, using that $t(yw)+t(y)w=yt(w)$,
\[
\Bup(z,w)y=(yw)\overline{z}+(yz)\overline{w}+\bigl(yt(w)\bigr)t(z),
\]
which is equivalent to the first equation in \eqref{eq:adjoint2}.

Similarly,
\[
\begin{split}
\Bup(x,y)\Bup(z,w)&=\Bup\bigl(\Bup(x,y)z,w\bigr),\\
\Bup(xz,yw)&=\Bup\bigl(\overline{y}(xz)+t(y)t(xz),w\bigr),\\
\Bup(xw,yz)&=\Bup\bigl(w,\overline{x}(yz)+t(x)t(yz)\bigr),\\
\Bup\bigl(xt(z),t(y)w\bigr)&=\Bup\bigl(t(y)(xt(z)),w\bigr),\\
\Bup\bigl(yz,t(x)t(w)\bigr)&=\Bup\bigl(t(yz),t(x)w\bigr)=\Bup\bigl(t(x)t(yz),w\bigr),
\end{split}
\]
and the nondegeneracy of $\Bup$ gives the second equation in \eqref{eq:adjoint2}.
\end{proof}

\begin{remark}
We may go in the reverse direction, and hence if the second equation in \eqref{eq:adjoint} and the first equation
in \eqref{eq:adjoint2} (or, alternatively, the first equation in \eqref{eq:adjoint} and the second equation
in \eqref{eq:adjoint2} are satisfied, then so is \eqref{eq:BBxyzw}.
\end{remark}

\bigskip

\subsection{Composition algebras with trivial Frobenius twist}
\ \null

Let $(U,\mu,\Qup,1)$ be a unital composition algebra in the symmetric tensor category $\Ver_4^+$
over a field $\FF$ of characteristic $2$ with trivial Frobenius twist: $U^{(1)}=0$. This is equivalent,
thanks to \cite[Proposition 3.4 and Corollary 3.5]{Kannanetal} to the condition $\ker t=\im t$. 
Write $X=\im t$. Then $1\in \ker t=\im t=X$, 
so there
is an element $v\in U$ such that $t(v)=1$. Let $\Bup$ be the associated bilinear form, and consider
the subspace $X_0=\{x\in X\mid \Bup(v,x)=0\}$.

\begin{lemma}\label{le:X0}
Under the conditions above, the following assertions hold:
\begin{itemize}
\item $X_0$ does not depend on $v$,  $X=\FF 1\oplus X_0$, and $U=X\oplus vX$.
\item $vx+xv=x$ for any $x\in X_0$.
\item For any $u\in U$, the following equation holds
\begin{equation}\label{eq:Buu}
\Bup(u,u)=\Bup\bigl(u,t(u)\bigr)=\Qup\bigl(t(u)\bigr).
\end{equation}
\item The subset $\{u\in U\mid t(u)=1\ \text{and}\ u^2\in\FF 1\}$ generates a two-dimensional subalgebra 
$S$ of $U$,
which is a twisted form of the algebra of dual numbers.
\end{itemize}
\end{lemma}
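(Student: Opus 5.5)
The plan is to exploit the fact that $X=\ker t=\im t$ is a Lagrangian subspace for $\Bup$. It is totally isotropic by Remark \ref{re:kert_perp_imt}. Moreover $\dim U=2\dim X$, because $t\colon U\to X$ is onto with kernel $X$. Since $\Bup$ is nondegenerate, an element of $X$ is therefore determined by its pairings with $U$ (equivalently, with $v$ and $vX$).

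\textbf{Step 1: the third item.} I would prove this first, since it feeds the others. For $u\in U$ we have $\rho_U(u\otimes u)=2u\otimes u+t(u)\otimes t(u)=t(u)\otimes t(u)$. Hence $\Bup(u,u)=\Qup\bigl(t(u)\bigr)$ directly from the definition $\Bup=\Qup\circ\rho_U$.

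For the middle term $\Bup\bigl(u,t(u)\bigr)$, I would use Proposition \ref{pr:adjunction}, or Proposition \ref{pr:adjunction2} with $x=1$. Two facts are also needed:
\begin{itemize}
\item $\Bup(1,1)=\Qup\bigl(t(1)\bigr)=0$;
\item for $y\in X$, the conic equation \eqref{eq:conic} together with $\Bup(y,1)=0$ gives $y^2=\Qup(y)1$.
\end{itemize}
I expect this bookkeeping to reduce to $\Bup\bigl(u,t(u)\bigr)=\Qup\bigl(t(u)\bigr)$.

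\textbf{Step 2: the first item.} If $t(v')=1=t(v)$, then $v'-v\in X$. Since $X$ is totally isotropic, $\Bup(v',x)=\Bup(v,x)$ for all $x\in X$, so $X_0$ does not depend on $v$.

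By Step 1, $\Bup(v,1)=\Bup\bigl(v,t(v)\bigr)=\Qup(1)=1$. So $x\mapsto\Bup(v,x)$ is a nonzero functional on $X$ with kernel $X_0$, and $X=\FF1\oplus X_0$.

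For $U=X\oplus vX$, note that $t(vx)=t(v)x+v\,t(x)=x$ for $x\in X$. Hence $vx\in X$ forces $x=t(vx)=0$. Then $\dim U=2\dim X$ gives the decomposition.

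\textbf{Step 3: the second item.} Apply the first equation of \eqref{eq:adjoint2} with $x=1$, $y=v$, $z=w$. Using $\overline{a}=a+\Bup(a,1)1$, $\Bup(v,1)=1$ and $t(v)=1$, this yields
\[
vw+wv=\Bup(w,1)v+w+\Bup(v,w)1+t(w)\qquad\text{for all } w\in U.
\]
Now let $x\in X_0$ and $w\in U$. By \eqref{eq:adjoint}, using $t(x)=0$ and $t(v)=1$,
\[
\Bup(vx+xv,w)=\Bup\bigl(x,\overline{v}w+w\overline{v}\bigr)=\Bup(x,vw+wv),
\]
where the $t(w)$ terms from the two adjunction formulas cancel in characteristic $2$. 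Inserting the displayed formula, and using $\Bup(x,X)=0$, $\Bup(x,v)=0$, $\Bup(x,1)=0$, $t(w)\in X$, gives $\Bup(vx+xv,w)=\Bup(x,w)$. Nondegeneracy of $\Bup$ then gives $vx+xv=x$.

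\textbf{Step 4: the fourth item.} I expect this to be the main computational point. First, $t(v^2)=t(v)v+vt(v)=2v=0$, so $v^2\in X$. Write $v^2=a1+x_0$ with $a\in\FF$ and $x_0\in X_0$.

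Any $u$ with $t(u)=1$ has the form $u=v+b1+x_1$ with $b\in\FF$ and $x_1\in X_0$. Three facts control $u^2$:
\begin{itemize}
\item $x_1^2=\Qup(x_1)1$, by \eqref{eq:conic}, since $x_1\in\im t$ and $\Bup(x_1,1)=0$;
\item $1x_1+x_11=0$ in characteristic $2$, and $X$ is commutative by Theorem \ref{th:kertconic};
\item $vx_1+x_1v=x_1$ by Step 3, and $v1+1v=0$.
\end{itemize}
Together these give $u^2=\bigl(a+b^2+\Qup(x_1)\bigr)1+x_0+x_1$. Hence $u^2\in\FF1$ if and only if $x_1=x_0$.

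So the given set is exactly $v+x_0+\FF1$. It generates $S=\FF1\oplus\FF u$ with $u^2=\alpha1$ for some $\alpha\in\FF$, and $S$ is $t$-invariant. Over an algebraic closure, $u+\sqrt{\alpha}\,1$ squares to zero, so $S$ is a twisted form of the dual numbers. The delicate points are getting the signs and the $t$-terms right in Steps 1 and 3; Step 4 is then a direct computation.
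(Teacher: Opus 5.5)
\textbf{There is a genuine gap in Step 1, and Steps 2--4 all depend on it.} Those steps use $\Bup(v,1)=1$: for the splitting $X=\FF1\oplus X_0$, and for $\overline v=v+1$ in your formula for $vw+wv$ in Step 3. You obtain this value only from the middle equality of \eqref{eq:Buu}, which you never prove; you only say you expect the bookkeeping to produce it.

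It does not follow from the ingredients you list. What \eqref{eq:adjoint}, \eqref{eq:Btinvariant} and $t(u)^2=\Qup\bigl(t(u)\bigr)1$ actually give is
\[
\Qup\bigl(t(u)\bigr)\Bup(1,v)=\Bup\bigl(t(u)^2,v\bigr)=\Bup\bigl(t(u),t(u)v\bigr)=\Bup\bigl(u,t(t(u)v)\bigr)=\Bup\bigl(u,t(u)\bigr).
\]
So the middle equality for general $u$ is equivalent to the single scalar identity $\Bup(v,1)=1$, i.e.\ to the case $u=v$. For $u=v$, the first equation of \eqref{eq:adjoint2} with $x=1$, $y=v$, $z=t(v)=1$ collapses to the tautology $\Bup(v,1)1=\Bup(v,1)1$. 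As written, the argument is circular at exactly the one point where a real computation is needed.

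\textbf{The missing input is $v^2\in X$,} which you only notice in Step 4. Since $t(v^2)=t(v)v+vt(v)=0$ and $X$ is totally isotropic, $\Bup(v^2,1)=0$. Now apply the first equation of \eqref{eq:adjoint} with $x=y=v$, $z=1$ (equivalently, \eqref{eq:BBxyzw} with $x=z=v$, $y=w=1$, which is what the paper does):
\[
0=\Bup(v^2,1)=\Bup(v,\overline v)=\Bup(v,v)+\Bup(v,1)^2 .
\]
Hence $\Bup(v,1)^2=\Bup(v,v)=\Qup\bigl(t(v)\bigr)=\Qup(1)=1$, so $\Bup(v,1)=1$ in characteristic $2$.

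Only after this does the general case of \eqref{eq:Buu} follow. Using \eqref{eq:Btinvariant} and \eqref{eq:BQuvx},
\[
\Bup\bigl(u,t(u)\bigr)=\Bup\bigl(vt(u),t(u)\bigr)=\Qup\bigl(t(u)\bigr)\Bup(v,1).
\]
So the logical order is the reverse of yours: $\Bup(v,1)=1$ comes first, and the third item is deduced from it.

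With this inserted, the rest of your proposal goes through and is close to the paper's argument. One simplification: in Step 3 you can put $w=x\in X_0$ directly into your formula for $vw+wv$ to get $vx+xv=x$. There is no need to pair against an arbitrary $w$ and invoke nondegeneracy.
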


\begin{proof}
Note first that $t^{-1}(1)=v+X$, and since $X$ is totally isotropic relative to $\Bup$ 
(Remark \ref{re:kert_perp_imt}),
it turns out that $X_0=\{x\in X\mid \Bup(x,u)=0\ \forall u\in t^{-1}(1)\}$ does not depend on $v$.

Now, $t(v^2)=t(v)v+vt(v)=v+v=0$, so $v^2\in X$ and \eqref{eq:BBxyzw} gives
\begin{multline*}
\Bup(v,1)^2=\Bup(v^2,1)+\Bup(v,v)=\Bup(v,v)\\
=\Qup\bigl(\rho_U(v\otimes v)\bigr)
=\Qup\bigl(t(v)\otimes t(v)\bigr)=\Qup(1)=1.
\end{multline*}
Hence $\Bup(v,1)=\Bup(v,v)=1$, so $1\not\in X_0$ and then $X_0$ is a subspace of $X$ of codimension $1$,
and $X=\FF 1\oplus X_0$. Moreover, $t(vx)=t(v)x+vt(x)=1x+v0=x$, so $t$ induces a bijection $vX\rightarrow X$
and $vX\cap X=0$. Therefore, $U=X\oplus vX$ because $\ker t=\im t$, so that $\dim U/X=\dim X$.

For any $u\in U$,
\[
\Bup\bigl(vt(u),t(u)\bigr)=
\begin{cases} 
\Qup\bigl(t(u)\bigr)\Bup(v,1)=\Qup\bigl(t(u)\bigr),\ \text{(by \eqref{eq:BQuvx})}\\
\Bup\bigl(t(vt(u)),u\bigr)=\Bup\bigl(t(v)t(u),u\bigr)=\Bup\bigl(u,t(u)\bigr),
\end{cases}
\]
and hence we get 
\[
\Bup\bigl(u,t(u)\bigr)=\Qup\bigl(t(u)\bigr)=\Qup\bigl(\rho_U(u\otimes u)\bigr)=\Bup(u,u),
\]
as required.

For any $x\in X_0$, $\overline{x}=x$, and \eqref{eq:adjoint2} gives $\overline{v}x+xv=\Bup(v,x)1=0$.
Since $\overline{v}=v+\Bup(v,1)1=v+1$, we get $vx+xv=x$. 

Since $v^2\in X$, we may write $v^2=\alpha 1+x$ for some $\alpha\in\FF$ and $x\in X_0$. Then 
$t(v+x)=1$ and, because of \eqref{eq:conic}
\[
(v+x)^2=v^2+(vx+xv)+x^2=(\alpha 1+x) +x+\Qup(x)1=\bigl(\alpha+\Qup(x)\bigr)1.
\]
Hence we may replace our original $v$ by $v+x$ and assume that $v^2=\lambda 1$ for some $\lambda\in\FF$.
Using that $vy+yv=y$ for any $y\in X_0$, it follows that the subset 
$\{u\in U\mid t(u)=1\ \text{and}\ u^2\in\FF 1\}$ is precisely $v+\FF 1$, and hence the subalgebra $S$ 
generated by this subset is $\FF 1+\FF v$, which is a twisted form of the algebra of dual numbers.
\end{proof}

\begin{remark}\label{re:lambda_invariant}
With the notation above, $v^2=\lambda 1$, and for any 
$v+\alpha 1\in \{u\in U\mid t(u)=1\ \text{and}\ u^2\in\FF 1\}$, $(v+\alpha 1)^2=(\lambda+\alpha^2)1$,
so that the class of $\lambda$ modulo $\FF^2$ is an invariant of the composition algebra $(U,\mu,\Qup)$.
\end{remark}

Pick then an element $v\in U$ with $t(v)=1$ and $v^2=\lambda 1$ for some $\lambda\in\FF$, and
define the linear map
\begin{equation}\label{eq:beta}
\begin{split}
\beta\colon X&\longrightarrow \FF\\
                 x&\mapsto \Bup(v,x).
\end{split}
\end{equation}
Note that $\beta$ is independent of $v$, because $\im t$ is totally isotropic, and $X_0$ is just the kernel of $\beta$.

\begin{lemma}\label{le:Bbeta}
The bilinear form $\Bup$ is determined by $\beta$. More explicitly, for all $x,y\in X$ we have
\[
\Bup(x,y)=0,\quad \Bup(x,vy)=\beta(xy),\quad \Bup(vx,vy)=\beta(xy).
\]
\end{lemma}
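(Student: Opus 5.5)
The decomposition $U=X\oplus vX$ of Lemma \ref{le:X0} reduces the claim to three identities on pairs $x,y\in X$. The first, $\Bup(x,y)=0$, is Remark \ref{re:kert_perp_imt}, since $X=\im t$ is totally isotropic. Once all three identities hold, $\Bup$ is determined by $\beta$ by bilinearity.

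For the second identity I will use the adjunction formulas of Proposition \ref{pr:adjunction}. The second equation in \eqref{eq:adjoint}, applied to the triple $(v,y,x)$, gives
\[
\Bup(vy,x)=\Bup\bigl(v,x\overline{y}+t(x)t(y)\bigr)=\Bup(v,x\overline{y})=\beta(x\overline{y}),
\]
because $t(x)=0$. Here $x\overline{y}\in X$, since $X=\ker t$ is a subalgebra. Next, $\overline{y}=y+\Bup(y,1)1$ and $\Bup(y,1)=0$ (as $1\in X$ and $X$ is totally isotropic), so $\overline{y}=y$. Hence $\Bup(x,vy)=\Bup(vy,x)=\beta(xy)$, using the symmetry of $\Bup$.

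For the third identity I will use the first equation in \eqref{eq:adjoint} with the triple $(v,x,vy)$:
\[
\Bup(vx,vy)=\Bup\bigl(x,\overline{v}(vy)+t(v)t(vy)\bigr).
\]
Since $t(v)=1$ and $t(vy)=t(v)y+vt(y)=y$, the last term is $y$, and $\Bup(x,y)=0$. Also $\overline{v}=v+1$, because $\Bup(v,1)=1$ by the proof of Lemma \ref{le:X0}. Therefore
\[
\Bup(vx,vy)=\Bup\bigl(x,(v+1)(vy)\bigr)=\Bup\bigl(x,v(vy)\bigr)+\Bup(x,vy).
\]
Applying \eqref{eq:xxy} requires its element to lie in $\ker t$, and $v\notin\ker t$, so I will handle $v(vy)$ differently. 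The second equation in \eqref{eq:adjoint2}, with $x=y=v$ and $z=vy$, gives
\[
\overline{v}(vz)+\overline{v}(vz)=\Bup(v,v)z+t(v)\bigl(t(v)z\bigr),
\]
that is, $0=z+z$, which is trivial. Instead I will compute $\Bup\bigl(x,v(vy)\bigr)$ directly with \eqref{eq:adjoint} applied to the triple $(v,vy,x)$:
\[
\Bup\bigl(v(vy),x\bigr)=\Bup\bigl(vy,\overline{v}x+t(v)t(x)\bigr)=\Bup\bigl(vy,(v+1)x\bigr)=\Bup(vy,vx)+\Bup(vy,x).
\]
Substituting this back, the terms $\Bup(vx,vy)$ cancel and leave $0=0$, so this route is circular. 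This is the main obstacle.

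The way around it is to write $v(vy)=v^2y+[\text{correction}]$ using the relation $vx+xv=x$ for $x\in X_0$. More directly, I will use the full linearization \eqref{eq:BBxyzw} with the entries $(v,v,x,y)$:
\[
\Bup(v,v)\Bup(x,y)=\Bup(vx,vy)+\Bup(vy,vx)+\Bup\bigl(vt(x),t(v)y\bigr)+\Bup\bigl(vx,t(v)t(y)\bigr).
\]
Here $t(x)=t(y)=0$, so this collapses to $0=2\Bup(vx,vy)$, which is also useless. So I will use \eqref{eq:BBxyzw} with the entries $(vx,v,1,y)$ instead, or more simply the partial linearization \eqref{eq:BQuvx} in its linearized form. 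For $x,1\in X$ the linearized \eqref{eq:BQuvx} reads
\[
\Bup(ux,u'\cdot 1)+\Bup(u\cdot 1,u'x)=\Bup(x,1)\Bup(u,u')=0.
\]
Taking $u=u'=v$ gives $\Bup(vx,v)=\Bup(v,vx)$, again tautological. The choice $u=v$, $u'=vy$ gives
\[
\Bup\bigl(vx,vy\bigr)=\Bup\bigl(v,(vy)x\bigr).
\]
To finish, I will reduce $(vy)x$ modulo $X$ and evaluate $\beta$, using $t\bigl((vy)x\bigr)=yx$ together with \eqref{eq:Buu}. That step, and in particular getting $\beta(xy)$ rather than $\beta(yx)$ plus a correction term, is where I expect the real difficulty. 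I will resolve it using commutativity of $X$ (Theorem \ref{th:kertconic}), which gives $yx=xy$.
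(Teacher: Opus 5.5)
Your first two identities are fine; the second is exactly the paper's argument, using the second equation in \eqref{eq:adjoint} and $\overline{y}=y$. The gap is in the third identity. You correctly reach $\Bup(vx,vy)=\Bup\bigl(v,(vy)x\bigr)$, but the step you announce for evaluating the right-hand side cannot work with the tools you name. The fact $t\bigl((vy)x\bigr)=yx$ only determines the $vX$-component of $(vy)x$ in $U=X\oplus vX$. Writing $(vy)x=v(yx)+w$ with $w\in X$, everything hinges on $\beta(w)$, which is exactly the ``correction term'' you are worried about. Equation \eqref{eq:Buu} says nothing about it. In characteristic $2$, the symmetry of $\Bup$ and \eqref{eq:Btinvariant} make both sides of $\Bup(u,u)=\Bup\bigl(u,t(u)\bigr)$ additive in $u$. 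So it only constrains diagonal values and can never produce a mixed value such as $\Bup(v,w)$. Commutativity of $X$ only turns $\beta(yx)$ into $\beta(xy)$. As written, the third identity is not proved.

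What is missing is a product relation between $vx$ and $xv$. The second equation in \eqref{eq:adjoint2} with entries $(v,x,1)$ gives $\overline{v}x+xv=\beta(x)1$, that is, $vx+xv=x+\beta(x)1$ for all $x\in X$. You mention this relation for $X_0$ but never use it. The paper combines it with \eqref{eq:BBxyzw} with entries $(x,v,v,y)$. Since $v^2=\lambda 1$ and $X$ is totally isotropic, this gives $\Bup(xv,vy)=\beta(x)\beta(y)$. Then $\Bup(vx,vy)=\Bup\bigl(x+xv+\beta(x)1,vy\bigr)=\beta(xy)+\beta(x)\beta(y)+\beta(x)\beta(y)=\beta(xy)$.

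Alternatively, the first route you abandoned closes at once. The second equation in \eqref{eq:adjoint2} with entries $(v,y,v)$ gives $(v+1)(yv)+\lambda y=\beta(y)v$. Substituting $yv=vy+y+\beta(y)1$ yields $v(vy)=(\lambda+1)y+\beta(y)1\in X$. Hence $\Bup\bigl(x,v(vy)\bigr)=0$, and your formula gives $\Bup(vx,vy)=\Bup(x,vy)=\beta(xy)$. Your attempt collapsed only because you specialized \eqref{eq:adjoint2} with both of its first two entries equal to $v$.
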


\begin{proof}
We already know that $X$ is totally isotropic (Remark \ref{re:kert_perp_imt}). For $x,y\in X_0$, 
Proposition \ref{pr:adjunction}
gives, as $\overline{y}=y$, $\Bup(x,vy)=\Bup(xy,v)=\beta(xy)$. Now, using \eqref{eq:BBxyzw}, we get
\[
\beta(x)\beta(y)=\Bup(x,v)\Bup(v,y)\\
  =\Bup(xv,vy)+\Bup(xy,v^2)+\Bup(x,y)=\Bup(xv,vy),
\]
because $X$ is totally isotropic. Since $vx+xv=x$ for any $x\in X_0$ and $X=\FF 1\oplus X_0$, it follows
that $vx+xv=x+\beta(x)1$ for any $x\in X$. But $\overline{v}=v+1$, so this can be written as
\begin{equation}\label{eq:vbarra}
vx+x\overline{v}=\beta(x)1=\overline{v}x+xv
\end{equation} 
for any $x\in X$. Then, an easy computation gives
\[
\begin{split}\Bup(vx,vy)&=\Bup(x\overline v+\beta(x)1,vy)=\Bup(x+xv+\beta(x)1,vy)\\
  &=\Bup(x,vy)+\Bup(xv,vy)+\beta(x)\Bup(1,vy)\\
  &=\beta(xy)+\beta(x)\beta(y)+\beta(x)\beta(y)=\beta(xy).\qedhere
\end{split}
\]
\end{proof}

\begin{lemma}\label{le:beta_associative}
The bilinear form $X\times X\rightarrow \FF$, $(x,y)\mapsto \beta(xy)$, is symmetric, nondegenerate and
associative (i.e., $\beta\bigl((xy)z\bigr)=\beta\bigl(x(yz)\bigr)$ for all $x,y,z\in X$). Moreover, $X_0$ is
the orthogonal complement to $\FF 1$ for this bilinear form.
\end{lemma}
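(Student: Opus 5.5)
The plan is to read everything off Lemma \ref{le:Bbeta}, which expresses $\Bup$ through $\beta$, together with the structural facts in Theorem \ref{th:kertconic} applied to $\ker t=\im t=X$.

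\emph{Symmetry.} By Theorem \ref{th:kertconic}, $\im t=X$ is a commutative ideal of $\ker t=X$, so $X$ itself is a commutative algebra. Hence $\beta(xy)=\beta(yx)$ for all $x,y\in X$. Alternatively, Lemma \ref{le:Bbeta} gives $\beta(xy)=\Bup(vx,vy)$, and $\Bup$ is symmetric.

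\emph{Nondegeneracy.} Suppose $x\in X$ satisfies $\beta(xy)=0$ for all $y\in X$. Since $U=X\oplus vX$ by Lemma \ref{le:X0}, every $u\in U$ has the form $u=y+vz$ with $y,z\in X$. Then Lemma \ref{le:Bbeta} gives
\[
\Bup(x,u)=\Bup(x,y)+\Bup(x,vz)=0+\beta(xz)=0.
\]
So $x$ lies in the radical of $\Bup$, and the nondegeneracy of $\Bup$ forces $x=0$.

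\emph{Associativity.} I would use the adjunction formulas \eqref{eq:adjoint}. Take $x,y,z\in X$. The second formula, applied to the pair $(vx)\cdot y$ with third argument $z$, gives
\[
\Bup\bigl((vx)y,z\bigr)=\Bup\bigl(vx,z\overline{y}+t(z)t(y)\bigr)=\Bup\bigl(vx,z\overline{y}\bigr),
\]
since $t(y)=0$. Here $\overline{y}=y+\beta'(y)1$ with $\beta'(y)=\Bup(y,1)=0$, because $X$ is totally isotropic. So $\overline{y}=y$ and $\Bup\bigl((vx)y,z\bigr)=\Bup(vx,zy)=\beta(xzy)$. This route is getting circular, so it is cleaner to use the first formula in \eqref{eq:adjoint} directly:
\[
\beta\bigl((xy)z\bigr)=\Bup\bigl(v,(xy)z\bigr)=\Bup\bigl(xy,vz\bigr).
\]
By Lemma \ref{le:Bbeta} this is $\beta\bigl((xy)z\bigr)$ again, so that computation is tautological as well.

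Instead, apply \eqref{eq:adjoint} with the first factor in $X$, where $\overline{x}=x$ and $t(x)=0$:
\[
\Bup(xy,w)=\Bup(y,xw)\quad\text{for all } x\in X,\ y,w\in U.
\]
Set $w=v$. Then
\[
\beta\bigl((xy)z\bigr)=\beta\bigl(z(xy)\bigr)=\Bup\bigl(z(xy),v\bigr)=\Bup\bigl(xy,zv\bigr),
\]
where the first equality uses commutativity of $X$ and the last uses the displayed rule with $z$ in the role of $x$. Applying the rule once more with $x$ in front gives $\Bup(xy,zv)=\Bup(y,x(zv))$. In parallel,
\[
\beta\bigl(x(yz)\bigr)=\Bup\bigl(x(yz),v\bigr)=\Bup\bigl(yz,xv\bigr)=\Bup\bigl(y,z(xv)\bigr).
\]
So associativity reduces to showing $\Bup\bigl(y,x(zv)+z(xv)\bigr)=0$ for all $y\in X$. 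Now use \eqref{eq:adjoint2} (second line) with $x,z\in X$, so that $t(x)=t(z)=0$ and the bars drop:
\[
x(zv)+z(xv)=\Bup(x,z)v=0,
\]
the last equality because $X$ is totally isotropic. This finishes associativity. The main obstacle is exactly this step: organizing the adjunctions so that the two products $x(zv)$ and $z(xv)$ appear symmetrically, so that \eqref{eq:adjoint2} can kill their sum.

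\emph{Orthogonal complement of $\FF1$.} The orthogonal complement of $\FF 1$ for this form is
\[
\{x\in X\mid \beta(x\cdot 1)=0\}=\ker\beta=X_0.
\]
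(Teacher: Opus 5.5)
Your proof is correct. Symmetry, nondegeneracy (via $U=X\oplus vX$ and Lemma \ref{le:Bbeta}) and the identification of $X_0$ match the paper, but your associativity argument takes a different route.

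The paper writes $\beta\bigl((xy)z\bigr)=\Bup\bigl(x,(vz)y\bigr)$. It then computes $(vz)y=(zy)v+\beta(y)z+\beta(z)y$ explicitly, using \eqref{eq:vbarra} (and hence the relation $vx+xv=x$ on $X_0$ from Lemma \ref{le:X0}) together with the first line of \eqref{eq:adjoint2}. You instead observe that left multiplication by an element of $X$ is self-adjoint for $\Bup$. This is the first line of \eqref{eq:adjoint}, since $\overline{x}=x$ and $t(x)=0$. Together with commutativity of $X$, this reduces everything to the identity $x(zv)=z(xv)$, which you read off from the second line of \eqref{eq:adjoint2}.

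Your route is shorter, and it uses nothing about $v$ beyond $\beta=\Bup(\cdot,v)$. Running the same chain with an arbitrary $w\in U$ in place of $v$ gives $\Bup\bigl((xy)z+x(yz),w\bigr)=0$ for all $w\in U$. Nondegeneracy of $\Bup$ then shows that $X$ itself is associative, which is stronger than the lemma asks. This is the same phenomenon the paper exploits in Step 1 of the proof of Theorem \ref{th:kert_not_imt}. The paper's route stays within the explicit $v$-calculus it is building anyway, but it gets only the weaker statement about $\beta$.

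Two small presentation points. First, the step $\Bup(yz,xv)=\Bup\bigl(y,z(xv)\bigr)$ silently uses $yz=zy$; this is fine since $y,z\in X$, but say it. Second, delete the two abandoned attempts at the start of your associativity paragraph.
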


\begin{proof}
The multiplication in $X$ is commutative (Theorem \ref{th:kertconic}), so $\beta(xy)$ is a symmetric bilinear form.
Since $\Bup$ is nondegenerate, $X$ is totally isotropic, and $U=X\oplus vX$, the restriction of $\Bup$ to 
$X\times vX$ is nondegenerate too, and Lemma \ref{le:Bbeta} shows then that $\beta(xy)$ is nondegenerate.

Finally, \eqref{eq:adjoint} gives
$\beta\bigl((xy)z\bigr)=\Bup\bigl((xy)z,v\bigr)=\Bup(xy,vz)=\Bup\bigl(x,(vz)y\bigr)$, for $x,y,z\in X$.
But $vz+z\overline{v}=\beta(z)1$ by \eqref{eq:vbarra}, hence we have
$(vz)y=(zv)y+zy+\beta(z)y$, and $(zv)y+(zy)\overline{v}=\Bup(v,y)z=\beta(y)z$ by \eqref{eq:adjoint2},
so that $(zv)y=(zy)v+zy+\beta(y)z$ and 
\[
(vz)y=(zv)y+zy+\beta(z)y=(zy)v+\beta(y)z+\beta(z)y.
\]
Using the commutativity of the multiplication in $X$ and Lemma \ref{le:Bbeta} we get
\[
\beta\bigl((xy)z\bigr)=\Bup\bigl(x,(vz)y\bigr)=\Bup\bigl(x,(zy)v\bigr)=\beta\bigl(x(zy)\bigr)=\beta\bigl(x(yz)\bigr),
\]
as required.
\end{proof}

Because of the associativity of $\beta$, we may write $\beta(xyz)$.

The next result restricts severely the dimension of any unital composition algebra in $\Ver_4^+$ with 
$\ker t=\im t$.

\begin{theorem}\label{th:dimX2}
Let $(U,\mu,\Qup,1)$ be a unital composition algebra in the symmetric tensor category $\Ver_4^+$
over a field $\FF$ of characteristic $2$ with trivial Frobenius twist ($\ker t=\im t$). Then the dimension of 
$\im t$ is at most two.
In other words, the dimension of $U$ is either $2$ or $4$.
\end{theorem}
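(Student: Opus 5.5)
The plan is to show that $\dim X_0\le 1$. Since $X=\FF 1\oplus X_0$ and $U=X\oplus vX$ (Lemma \ref{le:X0}), this gives $\dim X\le 2$ and $\dim U=2\dim X\in\{2,4\}$. I would work with the commutative algebra $X=\im t=\ker t$ and with the nondegenerate symmetric associative form $(x,y)\mapsto\beta(xy)$ of Lemma \ref{le:beta_associative}, for which $X_0=1^{\perp}$. Fix $v$ with $t(v)=1$ and $v^2=\lambda 1$.

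\textbf{Step 1: the products $xy$ for $x,y\in X_0$.} Since $X$ is totally isotropic for $\Bup$, we have $\Bup(x,1)=0$ for every $x\in X$. Equation \eqref{eq:conic} then gives $x^2=\Qup(x)1$ for all $x\in X$. The goal of this step is the stronger statement $X_0X_0\subseteq\FF 1$. I would first apply the first equation of \eqref{eq:adjoint2} with $x=v$ and $y,z\in X_0$. Because $t(y)=0$ and $\Bup(y,z)=0$, it yields $(vy)z=(vz)y$. I would then combine this with $vx+xv=x+\beta(x)1$ from \eqref{eq:vbarra}, with the second equation of \eqref{eq:adjoint2}, and with the adjunction formulas \eqref{eq:adjoint}, in order to rewrite $\Bup(xy,vw)=\beta(xyw)$ for $x,y,w\in X_0$. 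The aim is to show that $\beta(xyw)=0$. Since $X_0$ is the orthogonal complement of $\FF 1$ for the nondegenerate form $\beta(xy)$, the vanishing of $\beta\bigl((xy)w\bigr)$ for all $w\in X_0$ places $xy$ in $(X_0)^{\perp}=\FF 1$. If this route stalls, I would instead expand \eqref{eq:BBxyzw} with two arguments in $vX_0$ and two in $X_0$, and use Lemma \ref{le:Bbeta} to turn the identity into one involving $\beta$ only.

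\textbf{Step 2: conclusion.} Granting Step 1, define $b(x,y)\in\FF$ by $xy=b(x,y)1$ for $x,y\in X_0$. Then $b(x,y)=\beta(xy)$ because $\beta(1)=\Bup(v,1)=1$, so $b$ is symmetric and nondegenerate on $X_0$. It also satisfies $b(x,x)=\Qup(x)$. Theorem \ref{th:kertconic} states that $X$ is alternative, so \eqref{eq:xxy} gives $x(xy)=x^2y$, which here reads
\[
b(x,y)\,x=b(x,x)\,y\qquad\text{for all } x,y\in X_0 .
\]
Now suppose $\dim X_0\ge 2$. For linearly independent $x,y\in X_0$ the identity forces $b(x,x)=0=b(x,y)$. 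Every pair of elements of $X_0$ is either independent or can be separated by a third independent vector, so $b$ vanishes identically. This contradicts nondegeneracy, hence $\dim X_0\le 1$.

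\textbf{Main obstacle.} The step I expect to be hardest is Step 1, namely establishing $X_0X_0\subseteq\FF 1$, or equivalently $\beta(xyw)=0$ for $x,y,w\in X_0$. This requires a careful chain of the identities \eqref{eq:adjoint}, \eqref{eq:adjoint2} and \eqref{eq:vbarra}, together with the commutativity and associativity of $\beta$ on $X$. Step 2 is only a short linear-algebra argument once Step 1 is in place.
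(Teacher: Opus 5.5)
Your architecture is the same as the paper's. You prove $\beta(xyz)=0$ for $x,y,z\in X_0$, deduce $X_0X_0\subseteq\FF 1$, and then play $x(xy)=x^2y$ against the nondegeneracy of $\beta$. Your Step 2 is correct and is essentially the paper's final paragraph.

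The gap is Step 1. It is the real content of the theorem, and you only announce it. Moreover, the specific route you sketch goes in circles. It rewrites $\Bup(xy,vw)$ with only one argument outside $X$, and then every correction term $t(\cdot)t(\cdot)$ in \eqref{eq:adjoint} and \eqref{eq:adjoint2} vanishes, because $t(X)=0$. The identities you list then just reshuffle $\beta(xyw)$. For instance, $\Bup(xy,vw)=\Bup\bigl(x,(vw)y\bigr)=\Bup\bigl(x,(vy)w\bigr)=\Bup(vy,xw)=\beta(xyw)$. This is only the symmetry and associativity already recorded in Lemma \ref{le:beta_associative}. Note also that $\lambda$ never enters your sketch, although it is essential.

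The missing idea is to compute products inside $vX_0$. You then apply \eqref{eq:adjoint} with \emph{two} arguments in $vX_0$, where a correction term survives. Take $x,y,z\in X_0$.

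First, apply the second equation of \eqref{eq:adjoint2} to the triple $(v,x,v)$. This gives $\overline{v}(xv)+\lambda x=0$, so $v(xv)=xv+\lambda x$. Combined with $vx+xv=x$, this yields $v(vx)=(\lambda+1)x$.

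Next, apply the first equation of \eqref{eq:adjoint2} to $(v,x,vy)$, using $\overline{vy}=vy$ (because $\Bup(vy,1)=\beta(y)=0$). This gives $(vx)(vy)=\beta(xy)v+(\lambda+1)xy$.

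Now compute $\Bup\bigl((vx)(vy),vz\bigr)$ in two ways. Directly, with Lemma \ref{le:Bbeta}, it equals $(\lambda+1)\beta(xyz)$. Via the second equation of \eqref{eq:adjoint}, it becomes $\Bup\bigl(vx,(vz)(vy)+t(vz)t(vy)\bigr)=\Bup\bigl(vx,(vz)(vy)+zy\bigr)=\lambda\beta(xyz)$. The surviving term $t(vz)t(vy)=zy$ is exactly what shifts $\lambda+1$ to $\lambda$. Subtracting gives $\beta(xyz)=0$.

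Your fallback through \eqref{eq:BBxyzw} with two arguments in $vX_0$ would need exactly these product computations, and they are absent from the proposal.
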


\begin{proof}
With the notations above, where $v\in U$ satisfies $t(v)=1$ and $v^2=\lambda 1$, $\lambda\in \FF$,
note that for any $x\in X_0$, \eqref{eq:adjoint2} gives $\overline{v}(xv)+xv^2=0$, so that, as 
$\overline{v}=v+1$, $v(xv)=xv+\lambda x$ and hence, using Lemma \ref{le:X0}
\begin{equation}\label{eq:vvx}
v(vx)=v(xv+x)=xv+\lambda x+vx=(\lambda +1)x.
\end{equation}
Now, note that $\overline{vx}=vx$ because $\Bup(vx,1)=\beta(x)=0$ (Lemma \ref{le:Bbeta}), and hence
\eqref{eq:adjoint2} gives, for $x,y\in X_0$,
$(vx)(vy)+(v(vy))x=\Bup(x,vy)v=\beta(xy)v$, so that
\begin{equation}\label{eq:vxvy}
(vx)(vy)=\beta(xy)v+(\lambda +1)xy.
\end{equation}

Equation \eqref{eq:adjoint} gives, for $x,y,z\in X_0$,
\[
\Bup\bigl((vx)(vy),vz\bigr)=\Bup\bigl(vx,(vz)(vy)+zy\bigr).
\]
The left hand side of this equation is, thanks to Lemma \ref{le:Bbeta},
\[
\begin{split}
\Bup\bigl((vx)(vy),vz\bigr)&=\Bup\bigl(\beta(xy)v+(\lambda +1)xy,vz\bigr)\\
 &=\beta(xy)\Bup(v,vz)+(\lambda +1)\Bup(xy,vz)\\
 &=\beta(xy)\beta(z)+(\lambda +1)\beta(xyz)=(\lambda+1)\beta(xyz),\quad \text{as $\beta(z)=0$,}
\end{split}
\]
while the right hand side is
\[
\begin{split}
\Bup\bigl(vx,(vz)(vy)+zy\bigr)&=\Bup\bigl(vx,\beta(zy)v+(\lambda+1)zy+zy\bigr)\\
  &=\beta(zy)\Bup(vx,v)+\lambda\Bup(vx,zy)=\lambda\beta(xzy)=\lambda\beta(xyz).
\end{split}
\]
We conclude that $\beta(xyz)=0$ for any $x,y,z\in X_0$ and, by the nondegeneracy of $\beta$ 
(Lemma \ref{le:beta_associative}), this implies that $xy\in \FF1$ for any $x,y\in X_0$, so we have
\begin{equation}\label{eq:xy}
xy=\beta(xy)1
\end{equation}
for all $x,y\in X_0$. Multiplying this equation on the left by $x$ and using \eqref{eq:xxy} and \eqref{eq:conic}
we get $\Qup(x)y=x^2y=x(xy)=\beta(xy)x$. Hence, if $X_0\neq 0$, the nondegeneracy of $\beta$ (Lemma
\ref{le:beta_associative}) gives that if $\Qup(x)=0$, then necessarily $x=0$, and hence for any $0\neq x\in X_0$,
we have that $X_0=\FF x$. Thus either $X_0=0$ or $X_0$ is one-dimensional, as required.
\end{proof}

\bigskip

We are ready to classify the unital composition algebras on $\Ver_4^+$ with $\ker t=\im t$, that is, with
trivial Frobenius twist. Recall that, in this case, we have $\Gamma^2(U)=\rho_U(U\otimes U)$, 
so the quadratic form $\Qup$
is determined by $\Bup$, as $\Bup(u,v)=\Qup\bigl(\rho_U(u\otimes v)\bigr)$.

\begin{theorem}\label{th:kertimtclass}
Let $(U,\mu,\Qup,1)$ be a unital composition algebra in the symmetric tensor category $\Ver_4^+$
over a field $\FF$ of characteristic $2$ with trivial Frobenius twist ($\ker t=\im t$).
\begin{itemize}
\item 
If the dimension of $U$ is $2$, then there is an element $v\in U$ such that $t(v)=1$ and $v^2=\lambda 1$
for some $\lambda\in\FF$. The bilinear form $\Bup$ satisfies 
\[
\Bup(v,v)=\Bup(v,1)=1,\quad \Bup(1,1)=0.
\]

Denote by $U(\lambda)$ this algebra. 
The algebras $U(\lambda)$ and $U(\lambda')$
are isomorphic if and only if $\lambda'-\lambda\in\FF^2$.

\item If the dimension of $U$ is $4$, there is an element $v\in U$ such that $t(v)=1$ and $v^2=\lambda 1$
for some $\lambda\in\FF$ and an element $x\in \im t$ with $\Bup(v,x)=0$ such that $(1,v,x,vx)$ is a basis of
$U$ and the multiplication is given by the following table, where $0\neq \nu\in\FF^\times$:
\[
\vcenter{\offinterlineskip
\halign{\hfil$#$\enspace\hfil&#\vreglon
 &\hfil\enspace$#$\enspace\hfil
 &\hfil\enspace$#$\enspace\hfil
 &\hfil\enspace$#$\enspace\hfil
 &\hfil\enspace$#$\enspace\hfil\cr
 &\omit\hfil\vrule width 1pt depth 4pt height 10pt
   &1&v&x&vx \cr
 \noalign{\hreglon}
 1&&1&v& x&xv \cr
 v&&v&\lambda 1&vx&(\lambda+1)x \cr
 x&&x&x+vx&\nu 1&\nu(v+1) \cr
 vx&&vx&\lambda x+vx&\nu v&\nu v+(\lambda +1)\nu 1 \cr
}}
\]
The bilinear form is given by
\[
\Bup(v,v)=\Bup(v,1)=1,\quad \Bup(vx,vx)=\Bup(vx,x)=\nu,
\]
and all the other values are either $0$ or obtained by the symmetry of $\Bup$.

Denote by $U(\lambda,\nu)$ this algebra. The algebras $U(\lambda,\nu)$ and $U(\lambda',\nu')$ are 
isomorphic (as algebras in $\Ver_4^+$!) if and only if $\lambda'-\lambda\in\FF^2$ and $\nu'\nu^{-1}\in (\FF^\times)^2$.
\end{itemize}
\end{theorem}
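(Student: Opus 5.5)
The plan is to rely on the structure already obtained: Lemma \ref{le:X0}, Lemmas \ref{le:Bbeta} and \ref{le:beta_associative}, Theorem \ref{th:dimX2}, and equations \eqref{eq:vvx}, \eqref{eq:vxvy} and \eqref{eq:xy}. The proof has three parts: derive the multiplication table, prove the algebra is a composition algebra, and classify up to isomorphism.

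\emph{Multiplication table.} Choose $v$ with $t(v)=1$ and $v^2=\lambda 1$, as in Lemma \ref{le:X0}. By Theorem \ref{th:dimX2}, either $X_0=0$ or $X_0=\FF x$.
\begin{itemize}
\item If $X_0=0$, then $U=\FF1\oplus\FF v$ with $t(v)=1$. Lemma \ref{le:X0} gives $\Bup(v,v)=\Bup(v,1)=1$, and total isotropy of $X$ gives $\Bup(1,1)=0$.
\item If $X_0=\FF x$, put $\nu=\beta(x^2)$. Then $x^2=\nu1$ by \eqref{eq:xy}, and $\nu\neq0$ by the nondegeneracy in Lemma \ref{le:beta_associative}. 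The remaining products come from earlier identities: $vx+xv=x$ gives $xv$, \eqref{eq:vvx} gives $v(vx)=(\lambda+1)x$, and \eqref{eq:vxvy} gives $(vx)(vx)=\nu v+(\lambda+1)\nu1$. For $(vx)v$ I would use $v(xv)=xv+\lambda x$ from the proof of Theorem \ref{th:dimX2}, together with $(vx)v=(xv+x)v$ and the right alternative law \eqref{eq:xxy}. For $x(vx)$ and $(vx)x$ I would use \eqref{eq:vbarra} and alternativity, $x(xv)=x^2v=\nu v$.
\item The values of $\Bup$ then follow from Lemma \ref{le:Bbeta}: $\Bup(x,vx)=\Bup(vx,vx)=\beta(x^2)=\nu$, and all other pairings vanish. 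The action of $t$ is $t(v)=1$, $t(vx)=x$, $t(1)=t(x)=0$.
\end{itemize}

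\emph{Existence.} Conversely, $U(\lambda)$ and $U(\lambda,\nu)$ must be shown to be unital composition algebras in $\Ver_4^+$. First check that $t$ is a derivation on the basis; this makes $\mu$ a morphism. Next check $\Bup(t(u),w)=\Bup(u,t(w))$, so $\Bup$ is a morphism, and check that $\Bup$ is alternating and nondegenerate. Since the Frobenius twist is trivial, the remark after Proposition \ref{pr:rhoU2} yields a unique $\Qup$ with $\Bup=\Qup\circ\rho_U$. By Proposition \ref{pr:full_multiplicative}, multiplicativity reduces to the full linearization \eqref{eq:BBxyzw}. By the final remark of the previous subsection, this in turn reduces to the second equation in \eqref{eq:adjoint} together with the first in \eqref{eq:adjoint2}, checked on basis elements. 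This is a finite but tedious verification. I expect it to be the main computational obstacle, and I would organize it using the symmetries in Remark \ref{re:symmetryxy_zw} to cut down the cases.

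\emph{Isomorphism classes.}
\begin{itemize}
\item Necessity, first part: $\lambda\bmod\FF^2$ is an invariant by Remark \ref{re:lambda_invariant}. An isomorphism commutes with $t$ and fixes $1$, so it sends the set $\{u: t(u)=1,\ u^2\in\FF1\}$ to itself.
\item Necessity, second part: $X_0$ is intrinsic, and for $x\in X_0$ one has $x^2=\Qup(x)1$. Since $X_0$ is one-dimensional, $\nu$ changes only by squares under rescaling $x\mapsto \gamma x$.
\item Sufficiency: if $\lambda'=\lambda+\alpha^2$ and $\nu'=\gamma^2\nu$, take $v'=v+\alpha1$ and $x'=\gamma x$. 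One must check that $\Bup(v',x')=0$ and $t(v')=1$ still hold, that the basis $(1,v',x',v'x')$ satisfies the same table with the new parameters, and that the map respects $t$. For the last point, $v'x'=\gamma vx+\alpha\gamma x$ and $t(v'x')=x'$.
\end{itemize}
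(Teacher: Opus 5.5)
\textbf{Gap: the entry $(vx)v$.} The identities \eqref{eq:xxy} hold only when the repeated factor lies in $\ker t$. They come from \eqref{eq:barxxy}, which rests on \eqref{eq:QBxuv} and uses $\Qup(x)$, available only for $x\in\ker t$. Since $t(v)=1$, you cannot apply them with $v$ as the repeated factor.

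In fact $U(\lambda,\nu)$ is neither right alternative nor flexible in $v$. From the correct table, $(xv)v=(x+vx)v=(\lambda+1)x\neq\lambda x=xv^2$. Also $v(xv)=vx+(\lambda+1)x\neq vx+\lambda x=(vx)v$. So writing $(vx)v=(xv)v+xv$ and replacing $(xv)v$ by $xv^2$ gives $(vx)v=(\lambda+1)x+vx$, which is off by $x$. Invoking $v(xv)=xv+\lambda x$ cannot rescue this, since flexibility fails as well. A wrong entry would then also make your existence check fail.

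The fix, which is what the paper does, is the first identity of \eqref{eq:adjoint2} with $(x,y,z)$ replaced by $(v,v,x)$:
\[
v^2\overline{x}+(vx)\overline{v}=\Bup(v,x)v+\bigl(vt(v)\bigr)t(x)=0 .
\]
Since $\overline{x}=x$ and $\overline{v}=v+1$, this gives $(vx)v=vx+\lambda x$. A smaller slip: in the $4$-dimensional case, ``all other pairings vanish'' must still exclude $\Bup(v,v)=\Bup(v,1)=1$.

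\textbf{Isomorphism classes.} Your argument matches the paper's. To make ``$X_0$ is intrinsic'' valid for algebra isomorphisms, which are not assumed to be isometries, note the following. By \eqref{eq:vbarra} and commutativity of $\im t$, $X_0=\{y\in\im t\mid uy+yu=y\}$ for any $u$ with $t(u)=1$.

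\textbf{Existence: a genuinely different route.} The paper extends scalars and normalizes to $\lambda=0$, $\nu=1$. It then lists a basis of the $8$-dimensional object $\Gamma^2(U)$, computes $\tilde\mu$ on it, and checks that $\Qup\colon\Gamma^2(U)\to\FF$ is an algebra homomorphism. That is a direct verification of Definition \ref{df:compoSTC}.

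You use the trivial Frobenius twist twice: to obtain $\Qup$ from $\Bup$, and, via Proposition \ref{pr:full_multiplicative}, to replace multiplicativity by \eqref{eq:BBxyzw}. You then invoke the remark following Proposition \ref{pr:adjunction2}. This chain is sound. The reverse implication in that remark uses only symmetry and $t$-invariance of $\Bup$ and the fact that $t$ is a derivation, not multiplicativity or nondegeneracy.

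Your route keeps the whole computation inside $U$, with general $\lambda,\nu$ and no scalar extension. The price is checking two trilinear identities on all basis triples, and relying on more of the general theory than the paper's hands-on check.

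One caution: do not use Remark \ref{re:symmetryxy_zw} to reduce the number of cases. Its symmetries were derived from \eqref{eq:QBxuv} and \eqref{eq:BQuvx}, that is, from the multiplicativity you are trying to prove. Likewise, the symmetry of $\bigl(xt(y)\bigr)t(z)$ in $y,z$ came from \eqref{eq:barxxy}. Either check all triples, or verify those auxiliary facts independently first.
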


\begin{proof}
If the dimension is $2$, the algebra is spanned by $1$ and $v$ with $t(v)=1$, and as in Lemma \ref{le:X0},
$v^2=\lambda 1$ for some $\lambda\in\FF$. Besides, \eqref{eq:Buu} gives $\Bup(v,v)=\Bup(v,1)=1$, while
$\Bup(1,1)=0$ as $X=\FF 1$ is totally isotropic.

It is straightforward to check that this is indeed a composition
algebra in $\Ver_4^+$, but this will also follow from the next case, where this subalgebra appears as a subalgebra.

\smallskip

If the dimension is $4$, then the dimension of $X_0$ is $1$ and, as in the proof of Theorem \ref{th:dimX2}, 
$X_0=\FF x$ for an element $x$ with $\Qup(x)=\nu\neq 0$. The subalgebra $S$ in Lemma
\ref{le:X0} is generated by an element $v$ with $t(v)=1$ and $v^2=\lambda 1$ for some $\lambda\in\FF$.
Then $U=X\oplus vX$ (Lemma \ref{le:X0})
is spanned by the elements $1,v,x,vx$. Note that $x^2=\nu 1$ by \eqref{eq:conic}, $v(vx)=(\lambda +1)x$
by \eqref{eq:vvx}, $xv=vx+x$ by Lemma \ref{le:X0}, $x(vx)=x(xv+x)=\nu v+\nu 1$, because of 
\eqref{eq:xxy}, which also gives $(vx)x=\nu v$, $(vx)^2=\nu v+ (\lambda +1)\nu 1$ because of
\eqref{eq:vxvy}, and $v^2x+(vx)\overline{v}=0$ by \eqref{eq:adjoint2}, so $(vx)v=vx+\lambda x$, thus
completing the multiplication table above. 

The bilinear form $\Bup$ is determined by Lemma \ref{le:Bbeta}:  $\Bup(X,X)=0$, $\Bup(v,v)=\Bup(v,1)=1$, 
$\Bup(vx,vx)=\Bup(vx,x)=\beta(x^2)=\nu$,  and $\Bup(vx,1)=\Bup(v,x)=\beta(x)=0=\Bup(vx,v)$.

The isomorphism condition follows from the invariance of the subalgebra $S$ in Lemma \ref{le:X0} and of
the subspace $X_0$. The fact that $t$ acts as a derivation follows at once.

It remains to show that indeed the algebra with multiplication table above is a composition
algebra in $\Ver_4^+$. For this we may extend scalars to the algebraic closure, and hence, substituting
$v$ by $v+\sqrt{\lambda}1$ and $x$ by $\frac{1}{\sqrt{\nu}}x$, we may assume that 
$\lambda=0$ and $\nu=1$. The composition property $\Qup\otimes\Qup=\Qup\circ\tilde\mu$ is equivalent
to the fact that $\Qup\colon\Gamma^2(U)\rightarrow \FF$ is a homomorphism of algebras. The object
$\Gamma^2(U)$ is spanned by the following elements (indicating their image under 
$\Qup\colon\Gamma^2(U)\rightarrow \FF$):
\[
\begin{aligned}
I&=1\otimes 1\xrightarrow{\Qup} 1,&W&=vx\otimes 1+1\otimes vx\xrightarrow{\Qup}0,\\
X&=x\otimes x\xrightarrow{\Qup} 1,&V_x&=v\otimes x+x\otimes v\xrightarrow{\Qup} 0,\\
I_x&=x\otimes 1+1\otimes x\xrightarrow{\Qup} 0,\qquad&W_x&=vx\otimes x+x\otimes vx\xrightarrow{\Qup} 1,\\
V&=v\otimes 1+1\otimes v\xrightarrow{\Qup} 1,&Z&=v\otimes vx+vx\otimes v +x\otimes 1\xrightarrow{\Qup} 0.
\end{aligned}
\]
For instance, $\Qup(V)=\Qup\circ\rho_U(v\otimes 1)=\Bup(v,1)=1$.

\smallskip

The kernel of $\Qup$ is spanned by the elements $X-I,I_x,V-I,W,V_x,W_x-I,Z$. 

\smallskip

The multiplication table
is easily computed:
{\tiny\[
\vcenter{\offinterlineskip
\halign{\hfil$#$\enspace\hfil&#\vreglon
 &\hfil\enspace$#$\enspace\hfil
 &\hfil\enspace$#$\enspace\hfil
 &\hfil\enspace$#$\enspace\hfil
 &\hfil\enspace$#$\enspace\hfil
 &\hfil\enspace$#$\enspace\hfil
 &\hfil\enspace$#$\enspace\hfil
 &\hfil\enspace$#$\enspace\hfil
 &\hfil\enspace$#$\enspace\hfil\cr
 &\omit\hfil\vrule width 1pt depth 4pt height 10pt
   &I&X&I_x&V&W&V_x&W_x&Z \cr
 \noalign{\hreglon}
 I&&I&X&I_x&V&W&V_x&W_x&Z \cr
 X&&X&I&I_x&W_x&I_x+V_x&I_x+W&V&V_x+W+Z\cr
 I_x&&I_x&I_x&0&I_x+W+V_x&V+W_x&V+W_x&I_x+W+V_x&I+X+V+W_x\cr
 V&&V&W_x&W+V_x&I&I_x+Z&I_x+Z&X&W+V_x\cr
 W&&W&V_x&V+W_x&I_x+W+Z&X+V&I+W_x&I_x+V_x+Z&I+X\cr
 V_x&&V_x&W&V+W_x&V_x+Z&I+V&X+W_x&W+Z&0\cr
 W_x&&W_x&V&W+V_x&X&W+V_x+Z&W+V_x+Z&I&0\cr
 Z&&Z&I_x+Z&I+X&V_x+Z&I+V&X+V&I_x+V_x+Z&I+V\cr
}}
\]}
(For instance, $Z^2=\tilde\mu\bigl((v\otimes vx+vx\otimes v+x\otimes 1)
\otimes(v\otimes vx+vx\otimes v+x\otimes 1)\bigr)$,
and 
\[
\begin{split}
\tilde\mu\bigl((v\otimes vx)\otimes(v\otimes vx)\bigr)
   &=(\mu\otimes\mu)\circ c_{23}(v\otimes vx\otimes v\otimes vx)\\
   &=(\mu\otimes\mu)(v\otimes v\otimes vx\otimes vx+v\otimes 1\otimes x\otimes vx)\\
   &=v^2\otimes (vx)^2+v\otimes x(vx)=v\otimes(v+1),\\
\tilde\mu\bigl((v\otimes vx)\otimes(vx\otimes v)\bigr)
   &=(\mu\otimes\mu)\circ c_{23}(v\otimes vx\otimes vx\otimes v)\\
   &=(\mu\otimes\mu)(v\otimes vx\otimes vx\otimes v+v\otimes x\otimes x\otimes v)\\
   &=v(vx)\otimes (vx)v+vx\otimes xv=x\otimes vx+ vx\otimes (x+vx),\\
\tilde\mu\bigl((v\otimes vx)\otimes(x\otimes 1)\bigr)
   &=(\mu\otimes\mu)\circ c_{23}(v\otimes vx\otimes x\otimes 1)\\
   &=(\mu\otimes\mu)(v\otimes x\otimes vx\otimes 1)=vx\otimes vx,
\end{split}
\]
\[
\begin{split}
\tilde\mu\bigl((vx\otimes v)\otimes(v\otimes vx)\bigr)
   &=(\mu\otimes\mu)\circ c_{23}(vx\otimes v\otimes v\otimes vx)\\
   &=(\mu\otimes\mu)(vx\otimes v\otimes v\otimes vx+vx\otimes 1\otimes 1\otimes vx)\\
   &=(vx)v\otimes v(vx)+vx\otimes vx=vx\otimes x+ vx\otimes vx,\\
\tilde\mu\bigl((vx\otimes v)\otimes(vx\otimes v)\bigr)
   &=(\mu\otimes\mu)\circ c_{23}(vx\otimes v\otimes vx\otimes v)\\
   &=(\mu\otimes\mu)(vx\otimes vx\otimes v\otimes v+vx\otimes x\otimes 1\otimes v)\\
   &=(vx)^2\otimes v^2+(vx)x\otimes v=v\otimes v,\\
\tilde\mu\bigl((vx\otimes v)\otimes(x\otimes 1)\bigr)
   &=(\mu\otimes\mu)\circ c_{23}(vx\otimes v\otimes x\otimes 1)\\
   &=(\mu\otimes\mu)(vx\otimes x\otimes v\otimes 1=v\otimes v,
\end{split}
\]
\[
\begin{split}
\tilde\mu\bigl((x\otimes 1)\otimes(v\otimes vx)\bigr)
   &=(\mu\otimes\mu)\circ c_{23}(x\otimes 1\otimes v\otimes vx)\\
   &=(\mu\otimes\mu)(x\otimes v\otimes 1\otimes vx)=xv\otimes vx=(x+vx)\otimes vx,\\
\tilde\mu\bigl((x\otimes 1)\otimes(vx\otimes v)\bigr)
   &=(\mu\otimes\mu)\circ c_{23}(x\otimes 1\otimes vx\otimes v)\\
   &=(\mu\otimes\mu)(x\otimes vx\otimes 1\otimes v)=x(vx)\otimes v=(v+1)\otimes v,\\
\tilde\mu\bigl((x\otimes 1)\otimes(x\otimes 1)\bigr)
   &=(\mu\otimes\mu)\circ c_{23}(x\otimes 1\otimes x\otimes 1)\\
   &=(\mu\otimes\mu)(x\otimes x\otimes 1\otimes 1)=x^2\otimes 1=1\otimes 1,
\end{split}
\]
so we get
\[
\begin{split}
Z^2&=v\otimes(v+1)+x\otimes vx+ vx\otimes (x+vx)+vx\otimes vx+ 
vx\otimes x+ vx\otimes vx\\
 &\qquad\qquad +v\otimes v + v\otimes v+(x+vx)\otimes vx +(v+1)\otimes v+ 1\otimes 1\\
&=v\otimes 1+1\otimes v+1\otimes 1=I+V.
\end{split}
\]

By inspection, in the multiplication table above, the rows starting with an element $A$ with $\Qup(A)=0$ have all 
the elements $B$ with $\Qup(B)=0$, while the rows starting with an element $A$ with $\Qup(A)=1$ satisfy that
$\Qup(AB)=\Qup(B)$ for all $B$ in our basis. Hence $\Qup\colon\Gamma^2(U)\rightarrow \FF$ is
an algebra homomorphism, and hence $U$ is indeed a composition algebra in $\Ver_4^+$.
\end{proof}

\begin{remark}
For the algebra $U=U(\lambda)$ we have $\Gamma^2(U)=\FF (1\otimes 1)+\FF(v\otimes 1+1\otimes v)$, and
the multiplicative quadratic form is given by $\Qup(1\otimes 1)=1$, $\Qup(v\otimes 1+1\otimes 1)=1$, which is
independent on $\lambda$! Therefore, nonisomorphic unital composition algebras may have isometric quadratic
forms, something that does not happen for the unital composition algebras in $\vect$.

Something similar can be said of the $U(\lambda,\nu)$'s.
\end{remark}

Over an algebraically closed field there are only two possibilities.

\begin{corollary}\label{co:kertimtFbar}
Let $(U,\mu,\Qup,1)$ be a unital composition algebra in the symmetric tensor category $\Ver_4^+$
over an algebraically field $\FF$ of characteristic $2$ with trivial Frobenius twist. Then either $U$ is isomorphic
to $U(0)$ or to $U(0,1)$.
\end{corollary}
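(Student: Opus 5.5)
This follows directly from Theorem \ref{th:kertimtclass} together with Theorem \ref{th:dimX2}. By Theorem \ref{th:dimX2}, the dimension of $U$ is either $2$ or $4$. Theorem \ref{th:kertimtclass} then shows that $U$ is isomorphic to some $U(\lambda)$ in the first case and to some $U(\lambda,\nu)$, with $\nu\neq 0$, in the second. The task is therefore to normalize the parameters using the isomorphism criteria stated in that theorem.

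Since $\FF$ is algebraically closed of characteristic $2$, it is perfect, so every $\lambda\in\FF$ is a square. Hence $\lambda-0=\lambda\in\FF^2$, and the criterion in Theorem \ref{th:kertimtclass} gives $U(\lambda)\simeq U(0)$. Concretely, one replaces $v$ by $v+\sqrt{\lambda}\,1$, which still satisfies $t(v)=1$, and whose square is $(\lambda+\lambda)1=0$ by Remark \ref{re:lambda_invariant}.

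In dimension $4$, we also have $\nu\cdot 1^{-1}=\nu\in(\FF^\times)^2$ because $\FF$ is algebraically closed. Together with $\lambda\in\FF^2$, the isomorphism criterion yields $U(\lambda,\nu)\simeq U(0,1)$. Explicitly, the change of basis $v\mapsto v+\sqrt{\lambda}\,1$ and $x\mapsto \nu^{-1/2}x$ used at the end of the proof of Theorem \ref{th:kertimtclass} carries one table to the other.

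No step here is a real obstacle. The only point to check is that the square-root substitutions stay within the category $\Ver_4^+$, that is, they commute with $t$. This holds because $t(1)=0$ and $x\in\im t=\ker t$, so $t$ still sends the new $v$ to $1$ and kills the new $x$. Finally, $U(0)$ and $U(0,1)$ are not isomorphic to each other, since their dimensions differ.
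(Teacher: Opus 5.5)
Your proposal is correct and is essentially the paper's own (unwritten) argument: the corollary follows immediately from Theorem \ref{th:kertimtclass}, because over an algebraically closed field every $\lambda$ lies in $\FF^2$ and every $\nu\neq 0$ lies in $(\FF^\times)^2$. This is exactly the substitution $v\mapsto v+\sqrt{\lambda}\,1$, $x\mapsto \nu^{-1/2}x$ used at the end of that proof; your compatibility check with $t$ extends to the basis element $vx$ automatically, since $t$ acts as a derivation.
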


\bigskip

\subsection{Composition algebras with nontrivial Frobenius twist}
\ \null

Our goal in this subsection is to prove the following result.

\begin{theorem}\label{th:kert_not_imt}
Let $(U,\mu,\Qup,1)$ be a unital composition algebra in the symmetric tensor category $\Ver_4^+$
over a field $\FF$ of characteristic $2$ with nontrivial Frobenius twist ($\ker t\neq\im t$). Then $t=0$ and, 
therefore, $(U,\mu,\Qup,1)$
is a usual composition algebra, i.e., a composition algebra in $\vect$.
\end{theorem}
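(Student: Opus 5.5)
The plan is to argue by contradiction: assume $\im t\neq 0$ and use the hypothesis $\ker t\neq\im t$ to reach an impossibility. As an $\FF[t]$-module, $U$ splits as a sum of free rank-one modules and trivial modules. Hence $\dim\ker t+\dim\im t=\dim U$. Since $\Bup(\ker t,\im t)=0$ (Remark \ref{re:kert_perp_imt}) and $\Bup$ is nondegenerate, we get $(\im t)^\perp=\ker t$ and $(\ker t)^\perp=\im t$.

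\emph{Step 1: $1\notin\im t$.} If $1=t(v)$, then for any $a\in\ker t$ we have $t(av)=t(a)v+at(v)=a$. So $\ker t\subseteq\im t$, and hence $\ker t=\im t$, a contradiction.

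\emph{Step 2: $\Qup$ vanishes on $\im t$.} By Theorem \ref{th:kertconic}, $\ker t$ is conic alternative and $\im t$ is an ideal of $\ker t$ orthogonal to $1$. Hence $y^2=\Qup(y)1$ and $\overline{y}=y$ for $y\in\im t$. If $\Qup(y)\neq 0$, then $1=\Qup(y)^{-1}y^2\in\im t$, contradicting Step 1. So $\Qup(\im t)=0$ and $y^2=0$ for all $y\in\im t$. Also, by \eqref{eq:barxxy}, $y(yz)=0=(zy)y$ for all $z\in U$.

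\emph{Step 3: $\Bup$ is alternating on $U$.} In characteristic $2$ we have $\rho_U(u\otimes u)=t(u)\otimes t(u)$, so $\Bup(u,u)=\Qup\bigl(t(u)\bigr)=0$ by Step 2.

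\emph{Step 4: contradiction.} Insert $x=y=u$ into \eqref{eq:BBxyzw}. The left side vanishes, and $\Bup(uz,uw)+\Bup(uw,uz)=0$ by symmetry, which leaves
\[
\Bup\bigl(ut(z),t(u)w\bigr)+\Bup\bigl(uz,t(u)t(w)\bigr)=0 .
\]
Specializing $z=1$ gives $\Bup\bigl(u,t(u)t(w)\bigr)=0$ for all $u,w$, and polarizing gives $\Bup\bigl(u,t(u')t(w)\bigr)+\Bup\bigl(u',t(u)t(w)\bigr)=0$. Combining these with the adjunction formulas \eqref{eq:adjoint} (for $y\in\im t$ they read $\Bup(yu,z)=\Bup(u,yz)$) and \eqref{eq:adjoint2}, the goal is to prove that $(\im t)^2=0$ and then that $\im t\cdot U\subseteq\im t$. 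After that I would pick $a\in\ker t$ with $\Bup(a,1)=1$. Such an $a$ exists because $\ker t/\im t$ carries the nondegenerate form induced by $\Bup$, is unital and conic by \eqref{eq:conic}, and is nonzero, so it is not $\FF1$ with $\Bup(1,1)=0$. Then take $u$ with $0\neq t(u)=y$ and compute $\Bup(u,y)$ in two ways using $t(au)=ay$ and \eqref{eq:QBxuv}, \eqref{eq:adjoint2}. The aim is to force $\Bup(u,y)=0$ for every $u$. Then $y\in U^\perp=0$, a contradiction. Hence $t=0$, and by the discussion of Theorem \ref{th:composition_vec} the algebra is an ordinary Hurwitz algebra.

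The main obstacle is Step 4: extracting from \eqref{eq:BBxyzw} and \eqref{eq:adjoint2} enough identities to show that every $y\in\im t$ is orthogonal to the elements $u$ with $t(u)\neq0$. On $\ker t$ this orthogonality is automatic. I would first try $x=a$, $y=u$, $w=1$ in \eqref{eq:BBxyzw} together with the second identity of \eqref{eq:adjoint2}, using $y^2=0$, $\overline{y}=y$ and $t(1)=0$. If that does not close, I would extend scalars and use the structure of the Hurwitz algebra $\ker t/\im t$ to pick $a$ with $a^2=a$, which simplifies the bookkeeping.
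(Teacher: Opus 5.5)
Your Steps 1--3 are correct and match the first step of the paper's proof: $1\in\ker t\setminus\im t$, $\Qup(\im t)=0$, and $y^2=0$ on $\im t$. They also give $\Bup(u,u)=\Qup\bigl(t(u)\bigr)=0$. But Step 4 carries the entire content of the theorem, and it is a list of things to try rather than an argument. None of the tools you name for it makes progress.

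\emph{The identities you extract are not new.} The identity from $x=y=u$ in \eqref{eq:BBxyzw} holds automatically. For $y=t(u)$ one has $\overline{y}=y$ and $t(y)=0$, so \eqref{eq:adjoint}, \eqref{eq:Btinvariant} and Step 2 give $\Bup\bigl(u,t(u)t(w)\bigr)=\Bup\bigl(t(u)u,t(w)\bigr)=\Bup\bigl(t(u)^2,w\bigr)=0$. Your first fallback, setting $w=1$ in \eqref{eq:BBxyzw}, is exactly how \eqref{eq:adjoint} was derived, so it also adds nothing.

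\emph{Your intermediate target is the theorem itself.} The goal $\im t\cdot U\subseteq\im t$ is equivalent to the conclusion, not a step toward it. For $y\in\im t$, \eqref{eq:adjoint} gives $\Bup(yu,1)=\Bup(u,y)$ for all $u\in U$. So $yU\subseteq\im t\subseteq 1^\perp$ already means $y\in U^\perp=0$. Proving this containment is the whole difficulty. You also give no argument for $(\im t)^2=0$; the paper obtains it only after a dimension count.

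\emph{What is missing is a mechanism that produces a contradiction.} The paper's route is as follows.
\begin{enumerate}
\item Extend scalars and take a split subalgebra $T=\FF e_1\oplus\FF e_2\subseteq\ker t$ with orthogonal idempotents and $T\cap\im t=0$. Your idempotent $a$ is the right starting point.
\item Form the Peirce decomposition $U=\FF e_1\oplus\FF e_2\oplus U_{12}\oplus U_{21}$. Here $U_{12}$ and $U_{21}$ are totally isotropic, $t$-invariant, paired by $\Bup$, and multiply as in \eqref{eq:Us}.
\item Nilpotency of $\im t$ gives, without loss of generality, a nonzero $x\in\im t\cap U_{12}$ with $x\,\im t=0$. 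Then \eqref{eq:adjoint2} forces $\im t\cap U_{12}=\FF x$, hence $\dim\im t=2$ and $(\im t)^2=0$.
\item Construct explicit elements $u,v,a,b$. The subspace $\FF e_1\oplus\FF e_2\oplus\FF a\oplus\FF b\oplus\FF u^2\oplus\FF v^2\subseteq\ker t$ is then a $6$-dimensional unital composition algebra in $\vect$. This contradicts Theorem \ref{th:composition_vec}.
\end{enumerate}
The contradiction comes from the Hurwitz dimension restriction. Your proposal has nothing playing that role, so as it stands it does not prove the theorem.
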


\begin{proof}
We may extend scalars and assume that the ground field is algebraically closed. Assume that $t$ is not trivial. 
We will follow several steps to get a contradiction:

\smallskip

\noindent\textbf{1.} \emph{The unity lies in $\ker t\setminus \im t$ and $\im t$ is a commutative, associative and 
nilpotent subalgebra. Moreover, $x^2=0$ for any $x\in\im t$.} 

If $1$ lied in $\im t$, since this is an ideal of 
$\ker t$ (Theorem \ref{th:kertconic}), we would get $\ker t=\im t$. Therefore, $1\in\ker t\setminus \im t$.
We already know that $\im t$ is commutative by Theorem \ref{th:kertconic}. Now, by \eqref{eq:adjoint2} and commutativity,
\[
(xy)z=(xz)y=(zx)y=(zy)x=x(yz),
\]
for all $x,y,z\in \im t$. Moreover, \eqref{eq:conic} gives $x^2=\Qup(x)1\in \im t\cap\FF 1=0$ for 
any $x\in \im t$,  so the restricition of $\Qup$ to $\im t$ is trivial and $x^2=0$ for all 
$x\in \im t$. (Note that the anticommutativity property: $x^2=0$ for all $x\in \im t$, is stronger than
commutativity since the characteristic of $\FF$ is $2$.) Now, any finite-dimensional associative algebra satisfying $x^2=0$ for all $x$ is nilpotent, as any product
of $\dim X+1$ elements of a basis of $X$ has a repeated element.

\medskip

\noindent\textbf{2.} \emph{$U$ contains a subalgebra $T$ isomorphic to the Cartesian product of two copies of 
$\FF$.} 

As $\im t$ is the radical of the restriction of $\Bup$ to $\ker t$ by Theorem \ref{th:kertconic}, the nondegeneracy of 
$\Bup$ implies the existence of an element $u\in \ker t$ with $\Bup(1,u)=1$. Then $u^2=u+\Qup(u)1$ 
by \eqref{eq:conic}. Thus $T\bydef \FF 1\oplus \FF u$ is a subalgebra of $\ker t$ and $T\cap \im t=0$, as $\Bup$ is 
nondegenerate on $T$. Thus $T$ is a $2$-dimensional unital composition algebra in $\vect$ and, since the ground field is assumed to be algebraically closed, it is isomorphic to the Cartesian product of two copies of $\FF$.

\medskip

\noindent\textbf{3.} \emph{$\im t$ has dimension $2$ and $(\im t)^2=0$.} 

To prove this, note that $T$ is spanned by two orthogonal idempotents $e_1$ and $e_2$, 
with $1=e_1+e_2$, that is, $T=\FF e_1\oplus \FF e_2$ with $e_1^2=e_1$, $e_2^2=e_2$ and $e_1e_2=0=e_2e_1$. Moreover,
$\Bup(e_1,e_2)=1$, and $\Qup(e_1)=0=\Qup(e_2)$. Then $U=T\oplus T^\perp$. By \eqref{eq:adjoint}
we get $\Bup(TT^\perp,T)=\Bup(T^\perp, T^2)=0=\Bup(T^\perp T,T)$, because $T$ is a subalgebra,
and this shows that $TT^\perp+T^\perp T$ is contained in $T^\perp$. Note that, as $T$ is contained in
$\ker t$, then $T^\perp$ contains $\im t$.

Moreover, for any $a,b\in T$ and
$z\in T^\perp$, \eqref{eq:adjoint2} gives
$az=z\overline{a}$, and $(ab)z=(az)\overline{b}$, as $\overline{z}=z$. 
But then $(ab)z=(az)\overline{b}=b(az)$, and
since $T$ is commutative, this gives $(ab)z=a(bz)$ for any $a,b\in T$ and $z\in T^\perp$, proving that
$T^\perp$ is an associative left module for $T$, and actually an associative bimodule, as 
$(az)b=\overline{b}(az)=(a\overline{b})z=a(\overline{b}z)=a(zb)$, for all $a,b\in T$ and $z\in T^\perp$.

Note that $\overline{e_1}=e_2$. Thus we get the `Peirce decomposition'
\[
U=\FF e_1\oplus \FF e_2\oplus U_{12}\oplus U_{21},
\]
where
\[
\begin{aligned}
U_{12}&=\{u\in T^\perp\mid e_1u=u\}&U_{21}&=\{u\in T^\perp\mid e_2u=u\}\\
 &=\{u\in U\mid e_1u=u=ue_2\}\quad&&=\{u\in U\mid e_2u=u=ue_1\}\\
 &=\{u\in U\mid e_2u=0=ue_1\},&&=\{u\in U\mid e_1u=0=ue_2\}.
\end{aligned}
\]
Equation \eqref{eq:adjoint2} gives $e_1(xy)=x(e_2y)$ for any $x,y\in T^\perp$, and this implies
$T^\perp U_{12}\subseteq \{u\in U\mid e_1u=0\}=\FF e_2\oplus U_{21}$
and $T^\perp U_{21}\subseteq \{u\in U\mid e_1u=u\}=\FF e_1\oplus U_{12}$.
In the same vein from $(xy)e_1=(xe_2)y$ for $x,y\in T^\perp$ we
obtain $U_{21}T^\perp\subseteq \FF e_2\oplus U_{12}$ and  $U_{12}T^\perp\subseteq \FF e_1\oplus U_{21}$. 
As a consequence, the following relations hold:
\begin{equation}\label{eq:Us}
\begin{aligned}U_{12}^2&\subseteq U_{21}\quad &U_{21}^2&\subseteq U_{12},\\
  U_{12}U_{21}&\subseteq \FF e_1,&U_{21}U_{12}&\subseteq \FF e_2.
\end{aligned}
\end{equation}
Also, for any $x,y\in U_{12}$, $\Bup(x,y)=\Bup(e_1x,e_1y)=\Qup(e_1)\Bup(x,y)=0$, because of \eqref{eq:QBxuv}.
Hence we get $\Bup(U_{12},U_{12})=0$ and, similarly, $\Bup(U_{21},U_{21})=0$, so that
$\Bup$ induces a nondegenerate bilinear map $U_{12}\times U_{21}\rightarrow \FF$, and thus
$\dim U_{12}=\dim U_{21}$. Besides, for any $x\in U_{12}$, $t(x)=t(e_1x)=e_1t(x)$, which forces
$t(U_{12})\subseteq U_{12}$, and also $t(U_{21})\subseteq U_{21}$.

As $\im t$ is nilpotent, the subspace
$X=\{x\in \im t\mid x(\im t)=0\}$ is nonzero. From $(xe_1)y=(xy)e_2$ for any $x,y\in T^\perp$, it follows that
$X=(X\cap U_{12})\oplus (X\cap U_{21})$. Without loss of generality, we may assume $X\cap U_{12}\neq 0$.
Pick a nonzero element $x\in X\cap U_{12}$. There is an element $v\in U_{21}$ with $\Bup(x,v)=1$.
Remark \ref{re:kert_perp_imt} gives $t(v)\neq 0$.
Then for any $z\in \im t$, \eqref{eq:adjoint2} gives $(zx)v+(zv)x=\Bup(x,v)z=z$. But $xz=zx=0$ (recall that
$\im t$ is commutative by Theorem \ref{th:kertconic}), so we get 
\begin{equation}\label{eq:x_xwz}
z=(zv)x
\end{equation}
for any $z\in \im t$. If 
$z\in \im t\, \cap\, U_{12}=t(U_{12})$,
then $zv\in \FF e_1$, and $e_1x=x$, so we conclude  $z=(zv)x\in \FF x$, that is, 
$t(U_{12})=\im t\cap U_{12}=\FF x$. Since $t$ is skew relative to $\Bup$ (equation \eqref{eq:Btinvariant}),
we conclude that $t(U_{21})$ also has dimension $1$, and hence the dimension of $\im t$ is $2$.

Thus $\im t$ is anticommutative, nilpotent and two-dimensional, and this forces $(\im t)^2=0$. 

\medskip

\noindent\textbf{4.} By \eqref{eq:x_xwz} we have $0\neq t(v)=(t(v)v)x$, and the element $a=t(v)v\in U_{21}^2\subseteq U_{12}$
lies in $\ker t$, because $t(a)=t(v)^2=0$, while it is
not a scalar multiple of $x$, as $ax=(t(v)v)x=t(v)\neq 0$, while $x^2=0$. We conclude that the dimension
of $\ker t\cap U_{12}$ is at least $2$. The same happens in $U_{21}$ since $t$ is skew relative to $\Bup$
and $U_{12}$ and $U_{21}$ are paired by $\Bup$.

The computations above give a subspace $\FF u\oplus \FF a\oplus \FF x\subseteq U_{12}$, with $u$ such that $t(u)=x$ and 
chosen, by adding a scalar multiple of $x$ if needed, so that $\Bup(u,v)=0$. Note that 
$u\in U_{12}\setminus \ker t$, $a\in (U_{12}\cap \ker t)\setminus \im t$, and $x\in U_{12}\cap \im t$.

In the same vein, 
with $y=t(v)$ and $b=t(u)u=xu\in\ker t\setminus \im t$, we have $\FF v\oplus \FF b\oplus \FF y\subseteq U_{21}$,
with $v\in U_{21}\setminus \ker t$, $b\in (U_{21}\cap \ker t)\setminus \im t$, and $y\in U_{12}\cap \im t$.

Note that $1=\Bup(x,v)=\Bup(t(u),v)=\Bup(u,t(v))=\Bup(u,y)$, because of \eqref{eq:Btinvariant}.

\medskip

\noindent\textbf{5.} Some easy computations will be carried out now:
\begin{itemize}
\item The element $a=t(v)v=yv$ equals $vy$ and, similarly, $b=xu=ux$.

This follows because $yv+vy=\Bup(y,v)1=0$ by \eqref{eq:adjoint2}, using that $U_{21}$ is isotropic relative to 
$\Bup$.

\smallskip
 
\item $xv=e_1$, $yu=e_2$, $by=x$, and $ax=y$. 

To prove this, note that $xv$ belongs to $\FF e_1$ by \eqref{eq:Us}. Now \eqref{eq:x_xwz} gives $x=(xv)x$,
so necessarily $xv=e_1$. Also, \eqref{eq:x_xwz} gives $y=(yv)x=ax$. The proof of the other two equalities
follows by symmetry.

\smallskip

\item $\Bup(a,b)=1$, as we have $\Bup(a,b)=\Bup(vy,b)=\Bup(v,by)=\Bup(v,x)=1$, where we have used
\eqref{eq:adjoint}.

\smallskip

\item $uv=0=vu$.

Again, $uv\in\FF e_1$ by \eqref{eq:Us} and 
\[
(uv)x=(ux)v+\Bup(x,v)u=(xu)v+u=(xv)u+u=e_1u+u=2u=0,
\]
where we have used \eqref{eq:adjoint2} and the previous items. By symmetry we get $vu=0$.

\smallskip

\item $au=v$ because $au=(yv)u=(yu)v=e_2v=v$ by \eqref{eq:adjoint2}. Symmetrically, $bv=u$.

\smallskip

\item $u^2,v^2\in\ker t$, $\Bup(u^2,v^2)=1$, $\Bup(u^2,a)=0=\Bup(v^2,b)$, $u^2\not\in\FF b$, 
and $v^2\not\in\FF a$:

Indeed, $t(u^2)=t(u)u+ut(u)=2b=0$, so $u^2\in\ker t\cap U_{21}$. Symmetrically $v^2\in\ker t\cap U_{12}$. 
By \eqref{eq:adjoint2}, $v^2u=(vu)v+(vt(v))t(u)=ax=y$, and by \eqref{eq:adjoint} %and using $(\im t)^2=0$, 
we get
$\Bup(u^2,v^2)=\Bup(u,v^2u)=\Bup(u,y)=1$. Finally, $\Bup(u^2,a)=\Bup(u,au)=\Bup(u,v)=0$ 
by \eqref{eq:adjoint}, so $u^2\not\in\FF b$ 
since $\Bup(a,b)=1$. Once again, by simmetry, $v^2\not\in\FF a$.

\smallskip

\item $av^2=v^2a=0$ and $bu^2=u^2b=0$:

First, $v^2a\in\ker t\cap U_{21}$, but by \eqref{eq:adjoint2}, 
\[
v^2a=(va)v+\Bup(v,a)v+(vt(v))t(a)\in\FF v
\] 
as $va\in\FF e_2$ and $a\in\ker t$. 
So $v^2a\in\FF v\cap \ker t=0$. Once again,
by \eqref{eq:adjoint2}, $av^2=v^2a+\Bup(a,v^2)1+t(v^2)t(a)=0$, because $\Bup\vert_{U_{12}}$ is trivial.
Everything works symmetrically for $bu^2$.
\end{itemize}

\medskip

\noindent\textbf{6.}\quad Finally, we get a contradiction as follows, the subspace 
$S=\FF e_1\oplus \FF e_2\oplus \FF a\oplus \FF b\oplus \FF v^2\oplus \FF u^2$, which is contained
in $\ker t$, is a subalgebra since 
$a,v^2\in \ker t\cap U_{12}$ and $b,u^2\in \ker t\cap U_{21}$, 
$ab,au^2,v^2b,v^2u^2,ba,bv^2,u^2a,u^2v^2\in\FF e_1\oplus \FF e_2$,  $a^2,(v^2)^2,b^2,(u^2)^2\in\FF 1$
by \eqref{eq:conic}, and $av^2=v^2a=bu^2=u^2b=0$. The quadratic form $\Qup$ is multiplicative on $S$ 
(since it is so on $\ker t$) and $\Bup$ is nondegenerate on $S$. So $(S,\mu\vert_S,\Qup\vert_S,1)$ is a 
unital composition algebra in $\vect$ with $\dim S=6$, a contradiction with
Theorem \ref{th:composition_vec}.

\smallskip

This finishes the proof.

\end{proof}

\begin{remark}
Some arguments in the above proof can be derived from the fact that $\ker t$ is a conic alternative algebra and
$\ker t/\im t$ is a unital composition algebra in $\vect$, so that the results of \cite{McCrimmon} apply. In particular,
\cite[5.6 Corollary]{McCrimmon} proves $\im t=0$ if $\ker t/\im t$ is an octonion algebra,  $(\im t)^2=0$
if $\ker t/\im t$ is a quaternion algebra, and $(\im t)^3=0$ if $\ker t/\im t$ has dimension $2$.

We have preferred to give a more self-contained proof.
\end{remark}

%----------------------------------------

\end{document}